\newtheorem{Theorem}{Theorem}[section]
\newtheorem{Lemma}{Lemma}[section]
\newtheorem{Definition}{Definition}[section]
\begin{document}

\def\eR{\mathbf{R}}
\def\Rd{{\eR}^d}
\def\Rdd{{\eR}^{d\times d}}
\def\Rdsym{{\eR}^{d\times d}_{sym}}
\def\eN{\mathbf{N}}
\def\eZ{\mathbf{Z}}

\def\dd{\mbox{d}}
\newcommand{\essinf}{\operatorname{ess\,inf}}
\newcommand{\esssup}{\operatorname{ess\,sup}}
\newcommand{\supp}{\operatorname{supp}}
\def\d{\; \mathrm{d}}
\def\dx{\; \mathrm{d}x}
\def\dy{\; \mathrm{d}y}
\def\dz{\; \mathrm{d}z}
\def\diff{\mathsf{d}}
\def\div{\mathop{\mathrm{div}}\nolimits}
\def\diam{\mathrm{diam}}

\def\Aeps{\mathbf{A}^\varepsilon}
\def\ueps{\mathbf{u}^\varepsilon}
\def\boldp{\mathbf{p}}
\def\bolds{\mathbf{s}}
\def\boldu{\mathbf{u}}
\def\boldv{\mathbf{v}}
\def\boldw{\mathbf{w}}
\def\boldz{\mathbf{z}}
\def\boldA{\mathbf{A}}
\def\boldB{\mathbf{B}}
\def\boldD{\mathbf{D}}
\def\boldE{\mathbf{E}}
\def\boldF{\mathbf{F}}
\def\boldI{\mathbf{I}}
\def\boldO{\mathbf{O}}
\def\boldP{\mathbf{P}}
\def\boldT{\mathbf{T}}
\def\boldU{\mathbf{U}}
\def\boldUeps{\mathbf{U}^\varepsilon}
\def\boldV{\mathbf{V}}
\def\boldW{\mathbf{W}}
\def\boldZ{\mathbf{Z}}
\def\bzero{\mathbf{0}}
\def\uk{\boldu^{k}}
\def\boldAk{\boldA^k}
\def\balpha{\boldsymbol{\alpha}}
\def\bbeta{\boldsymbol{\beta}}
\def\boldzeta{\boldsymbol{\zeta}}
\def\boldeta{\boldsymbol{\eta}}
\def\boldchi{\boldsymbol{\chi}}
\def\boldxi{\boldsymbol{\xi}}
\def\bphi{\boldsymbol{\varphi}}
\def\bpsi{\boldsymbol{\psi}}
\def\tbpsi{\tilde{\bpsi}}
\def\WCon{\xrightharpoonup{\hphantom{2-s}}}
\def\WSCon{\xrightharpoonup{\raisebox{0pt}[0pt][0pt]{\hphantom{\(\scriptstyle{2-s}\)}}}^*}
\def\WTSSCon{\xrightharpoonup{\raisebox{0pt}[0pt][0pt]{\(\scriptstyle{2-s}\)}}^*}
\def\STSCon{\xrightarrow{2-s}}
\def\SCon{\xrightarrow{\hphantom{2-s}}}
\def\ModConv#1{\xrightarrow{\mathmakebox[1.5em]{#1}}}
\def\ModConvM{\ModConv{M}}
\def\NabOv{\overline\nabla}

\title{Existence and homogenization of nonlinear elliptic systems in nonreflexive  spaces\thanks{M.~Bul\'{\i}\v{c}ek was partially supported
by the Czech Science Foundation (grant no. 16-03230S). The research of A.~\'Swierczewska--Gwiazda and P.~Gwiazda have received funding from the National Science Centre, Poland, 2014/13/B/ST1/03094. This work was partially supported by the Simons - Foundation grant 346300 and the Polish Government MNiSW 2015-2019 matching fund.}}
\author[$\dagger$]{Miroslav Bul\'\i\v cek}
\author[$\star$]{Piotr Gwiazda}
\author[$\dagger$]{Martin Kalousek}
\author[$\star$]{Agnieszka \'Swierczewska-Gwiazda}

\affil[$\dagger$]{Mathematical Institute, Faculty of Mathematics and Physics, Charles University\\ Sokolovsk\'{a} 83,
186 75 Praha 8, Czech Republic}
\affil[$\star$]{Institute of Applied Mathematics, University of Warsaw, ul. Banacha 2, 02-097 Warsaw, Poland}
\date{}
\maketitle
\abstract{We consider a strongly nonlinear elliptic problem with the homogeneous Dirichlet boundary condition. The growth and the coercivity of the elliptic operator is assumed to be indicated by an inhomogeneous anisotropic $\mathcal{N}$-function. First, an existence result is shown under the assumption that the $\mathcal{N}$--function or its convex conjugate satisfies $\Delta_2$--condition. The second result concerns the homogenization process for families of strongly nonlinear elliptic problems with the homogeneous Dirichlet boundary condition under above stated conditions on the elliptic operator, which is additionally assumed to be periodic in the spatial variable.
}

\section{Introduction}
Given $\boldF\!:\Omega\rightarrow\eR^{d\times N}$ and a nonlinear operator $\boldA\!: \Rd\times \eR^{d\times N}\rightarrow \eR^{d\times N}$ we study elliptic systems  of the form
\begin{equation}\label{StudPr}
\begin{alignedat}{2}
\div \boldA\left(\frac{x}{\varepsilon},\nabla\ueps\right)& =\div\boldF &&\text{ in }\Omega,\\
\ueps&=0 &&\text{ on }\partial\Omega,
\end{alignedat}
\end{equation}
where $\Omega\subset\Rd$ is a bounded Lipschitz domain with $d\geq 2$ and $\ueps\!:\Omega \to \eR^N$ with $N\in \mathbf{N}$ is an unknown.
Our goal is twofold: Firstly, we want to show the solvability, i.e., the existence of $\ueps$  of \eqref{StudPr} for as general class of operators  $\boldA$ as possible and secondly,  for operators $\boldA$ that are $Y$-periodic with respect to the first variable, where $Y:=(0,1)^d$, we want to study the limit process as $\varepsilon \to 0$.

The basic framework we are dealing in, or more precisely, the class of operators $\boldA$ we are interested in, is the following:
\begin{enumerate}[label=(A\arabic*)]
	\item\label{AO} $\boldA$ is a Carath\'eodory mapping, i.e., $\boldA(\cdot,\xi)$ is measurable for any $\xi\in \eR^{d\times N}$ and $\boldA(y,\cdot)$ is continuous for a.a. $y\in\Rd$,
	\item\label{Aperi} $\boldA$ is $Y-$periodic, i.e., periodic in each argument $y_i,i=1,\ldots,d$ with the period $1$,
	\item\label{ATh} There exists an ${\mathcal N}-$function $M\!:\Rd\times \eR^{d\times N}\rightarrow[0,\infty)$ and a constant $c>0$ such that for a.a. $y\in Y$ and all $\boldxi\in \eR^{d\times N}$ there holds\footnote{  Note that the condition could be formulated more generally, i.e., $\boldA(y,\boldxi)\cdot\boldxi\geq c(M(y,\boldxi)+M^*(y,\boldA(y,\boldxi)))-k(y)$ for some integrable function $k$.  For readability we omit this generality here, however such case could easily be treated, see e.g.~\cite{GSG08}. }
\begin{equation*}
		\boldA(y,\boldxi)\cdot\boldxi\geq c(M(y,\boldxi)+M^*(y,\boldA(y,\boldxi))),
\end{equation*}
	\item\label{AF} For all $\boldxi,\boldeta\in\eR^{d\times N}$ such that $\boldxi\neq\boldeta$ and a.a. $y\in Y$, we have
\begin{equation*}
		(\boldA(y,\boldxi)-\boldA(y,\boldeta))\cdot(\boldxi-\boldeta)> 0.
\end{equation*}
\end{enumerate}
The conditions \ref{AO}, \ref{ATh} and \ref{AF} describe a general monotone spatially dependent operator, and will be assumed in the existence result of the paper. The periodicity assumption \ref{Aperi} will be used for the homogenization process. We also refer the reader to Appendix~\ref{Ape1} for the notion of $\mathcal{N}$--functions.

The first problem we want to solve is the existence and  uniqueness of the solution to \eqref{StudPr}, which is an elliptic problem for which many results are available. In particular, as far as the function $M$ and the function $M^*$ satisfy $\Delta_2$--condition, the existence and  uniqueness of a solution directly follows from the Minty method\footnote{When   $M$ and the function $M^*$ satisfy $\Delta_2$--condition then the corresponding function spaces are reflexive and separable and therefore the classical methods work.}. Therefore, our main interest is to investigate the case when $\Delta_2$--condition is not valid. However, in such setting one has to overcome the difficulties caused by the non-reflexivity and non-separability of related function spaces. The  result related to such a setting was established in \cite{GSZG2017}, where the existence  of a solution was shown for operators $\boldA$ satisfying our assumption with one proviso: the  function~$M$ has to be log-H\"{o}lder continuous with respect to the spatial variable. An analogous  result for parabolic problem was shown in \cite{SG2014, SG2014a}. Hence, in this paper, we want to deal also with possibly discontinuous functions~$M$, which can be well motivated by many physically relevant applications. To mention a few, where function $M$ is discontinuous with respect to spatial variable and has even the exponential growth with respect to the gradient of unknown, we refer to \cite{S1,S2,S3}.  For this case, the first existence result was obtained in \cite{GSG208}, where the authors assumed in addition that the conjugate function $M^*$ satisfies the $\Delta_2$--condition. Surprisingly, the method developed in \cite{GSG208} cannot be simply adapted to the case when $M$ satisfies $\Delta_2$--condition. Nevertheless, in this paper we will overcome this difficulty and by using the \emph{dual approach} we shall obtain the following, kind of unifying, result for $\mathcal{N}$--function possibly discontinuous with respect to the spatial variable and not fulfilling the $\Delta_2$--condition.
\begin{Theorem}\label{Thm:MainOne}
Let $N\geq 1$, $\Omega\subset\Rd$ be a bounded Lipschitz domain with $d\ge 2$. Assume that an operator $\boldA$ satisfies \ref{AO}, \ref{ATh} and \ref{AF}. Let $M:\Omega\times\eR^{d\times N}\rightarrow[0,\infty)$ be the ${\mathcal N}$--function from \eqref{ATh} satisfying  $\Delta_2$--condition and for all $R>0$ there holds
\begin{equation}\label{Cis1}
	\int_\Omega\sup_{|\boldxi|=R}M(x,\boldxi)\dx<\infty,
\end{equation}
or $M^*$ satisfies  $\Delta_2$--condition and for all $R>0$
\begin{equation}\label{Cis2}
	\int_\Omega\sup_{|\boldxi|=R}M^*(x,\boldxi)\dx<\infty.
\end{equation}
Then there exists a unique weak solution to the problem
\begin{equation}\label{EllPr}
	\begin{alignedat}{2}
		\div\boldA(x,\nabla\boldu(x))&=\div\boldF(x)&&\text{ in }\Omega,\\
		\boldu&=0&&\text{ on }\partial\Omega.
	\end{alignedat}
\end{equation}
\end{Theorem}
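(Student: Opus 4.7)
The plan is to prove Theorem~\ref{Thm:MainOne} by a Galerkin approximation combined with a Minty-type monotonicity argument tailored to the non-reflexive Musielak--Orlicz--Sobolev setting. The one-sided $\Delta_2$-hypothesis, together with \eqref{Cis1} or \eqref{Cis2}, is used only to guarantee that one of the spaces $L_M(\Omega)$, $L_{M^*}(\Omega)$ is separable and admits a dense subspace of smooth functions; the two cases in the theorem are handled in parallel by exchanging the roles of the gradient $\nabla \boldu$ and the flux $\boldA(x, \nabla \boldu)$, which is the \emph{dual approach} referred to in the introduction.

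\textbf{Step 1 (Galerkin, a priori bounds, weak limits).} Fix a countable family $\{\boldw_k\}_k \subset C_c^\infty(\Omega; \eR^N)$ that is modular-dense in the separable Orlicz--Sobolev space associated with whichever $\mathcal{N}$-function satisfies $\Delta_2$. For each $n$, Brouwer's fixed-point theorem applied in the coefficient space (using \ref{AO} and the coercivity \ref{ATh}) produces $\boldu^n \in \operatorname{span}\{\boldw_1, \dots, \boldw_n\}$ satisfying
\[
	\int_\Omega \boldA(x, \nabla \boldu^n) \cdot \nabla \boldw_k \dx = \int_\Omega \boldF \cdot \nabla \boldw_k \dx, \qquad k = 1, \dots, n.
\]
Testing with $\boldu^n$ and combining \ref{ATh} with the Fenchel--Young inequality yields the uniform modular bound $\int_\Omega \bigl( M(x, \nabla \boldu^n) + M^*(x, \boldA(x, \nabla \boldu^n)) \bigr) \dx \le C$. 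By de la Vall\'ee Poussin the sequences $\nabla \boldu^n$ and $\boldA(x, \nabla \boldu^n)$ are equi-integrable, and Banach--Alaoglu in the appropriate dual pairing---$(L_M)^* = L_{M^*}$ when $M \in \Delta_2$, or $(L_{M^*})^* = L_M$ when $M^* \in \Delta_2$---yields, along a subsequence, $\nabla \boldu^n \WSCon \nabla \boldu$ and $\boldA(x, \nabla \boldu^n) \WSCon \overline{\boldA}$ in the appropriate weak or weak-$*$ topologies, with $\nabla \boldu \in L_M(\Omega)$ and $\overline{\boldA} \in L_{M^*}(\Omega)$. Passing to the limit in the Galerkin identity and invoking the modular density of $\{\boldw_k\}_k$ gives $\int_\Omega \overline{\boldA} \cdot \nabla \boldv \dx = \int_\Omega \boldF \cdot \nabla \boldv \dx$ for every admissible test $\boldv$.

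\textbf{Step 2 (Identification $\overline{\boldA} = \boldA(x, \nabla \boldu)$---the main obstacle).} Because $\boldu^n$ is an admissible Galerkin test, the energy identity $\int_\Omega \boldA(x, \nabla \boldu^n) \cdot \nabla \boldu^n \dx = \int_\Omega \boldF \cdot \nabla \boldu^n \dx$ holds, and its right-hand side converges to $\int_\Omega \boldF \cdot \nabla \boldu \dx = \int_\Omega \overline{\boldA} \cdot \nabla \boldu \dx$ (the first limit by pairing the datum $\boldF$, which lies in the separable subspace $E_{M^*}$, against $\nabla \boldu^n$; the second by testing the limit equation with $\boldu$). Combined with the strict monotonicity \ref{AF} in the inequality
\[
	\int_\Omega \bigl( \boldA(x, \nabla \boldu^n) - \boldA(x, \boldxi) \bigr) \cdot \bigl( \nabla \boldu^n - \boldxi \bigr) \dx \ge 0
\]
for bounded measurable $\boldxi$ with both $\boldxi \in E_M$ and $\boldA(\cdot, \boldxi) \in E_{M^*}$ (a class made rich by \eqref{Cis1} or \eqref{Cis2} on simple/smooth matrices), passing $n \to \infty$ produces $\int_\Omega (\overline{\boldA} - \boldA(x, \boldxi)) \cdot (\nabla \boldu - \boldxi) \dx \ge 0$. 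Taking $\boldxi = \nabla \boldu + t \nabla \bphi$ for $\bphi \in C_c^\infty(\Omega; \eR^N)$, dividing by $t$, and letting $t \to 0^\pm$ (using the continuity of $\boldA(y, \cdot)$ from \ref{AO} and dominated convergence, justified by the one-sided $\Delta_2$-hypothesis together with \eqref{Cis1} or \eqref{Cis2}) forces $\overline{\boldA} = \boldA(x, \nabla \boldu)$ a.e.\ in $\Omega$. The main difficulty is admissibility: enlarging the Minty test class to accommodate $\nabla \boldu + t \nabla \bphi$ demands a careful modular approximation that exploits, precisely, that the $\Delta_2$-side provides modular convergence of smooth truncations on the separable side of the duality; this is exactly what substitutes for reflexivity in the classical argument.

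\textbf{Step 3 (Uniqueness).} If $\boldu_1, \boldu_2$ are two weak solutions, then $\boldu_1 - \boldu_2 \in W_0^{1,M}(\Omega)$ is admissible in both equations; subtracting and applying \ref{AF} forces $\int_\Omega (\boldA(x, \nabla \boldu_1) - \boldA(x, \nabla \boldu_2)) \cdot (\nabla \boldu_1 - \nabla \boldu_2) \dx = 0$, whence $\nabla \boldu_1 = \nabla \boldu_2$ a.e., and the Dirichlet condition gives $\boldu_1 = \boldu_2$.
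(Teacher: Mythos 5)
Your plan works, in essence, only for half of the theorem. In the case when $M^*$ satisfies the $\Delta_2$--condition and \eqref{Cis2} holds, the Galerkin scheme plus a nonreflexive Minty argument is indeed the paper's route (Lemma~\ref{Lem:ExUnMStDTwo}, proved in Appendix~\ref{Ape3}); your Step 2 is looser than it should be, since the comparison field $\boldxi=\nabla\boldu+t\nabla\bphi$ is not admissible in the monotonicity inequality you derived (that inequality is only available against bounded fields), and the paper instead localizes to the sets $\Omega_l=\{|\nabla\boldu|\le l\}$, perturbs by $h\boldZ\chi_l$ with $\boldZ\in L^\infty$, and uses \eqref{Cis2} together with the Vitali theorem — but this is a repairable technicality.

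The genuine gap is the other case, $M\in\Delta_2$ with $M^*\notin\Delta_2$, where your argument as written breaks down at exactly the point the paper flags. There, $\bar\boldA$ is only known to lie in $L^{M^*}(\Omega;\eR^{d\times N})$, which is strictly larger than $E^{M^*}$ in general, while $\nabla\boldu^n\WSCon\nabla\boldu$ holds only in $L^M=(E^{M^*})^*$, i.e.\ against functionals from $E^{M^*}$; hence you cannot pass to the limit in $\int_\Omega\bar\boldA\cdot\nabla\boldu^n\dx$, and "testing the limit equation with $\boldu$" is not justified: the limit equation is known only for test functions reachable from the Galerkin span, and the "modular density" of $C^\infty_c$ you invoke is precisely what fails for an $\mathcal{N}$--function that is merely measurable (and possibly discontinuous) in $x$ — this Lavrentiev-type obstruction is the reason earlier results needed log--H\"older continuity. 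Your opening sentence about "exchanging the roles of the gradient and the flux" gestures at the correct fix, but nothing in Steps 1--3 implements it, and the exchange is not a symmetric relabeling: the paper's proof of Lemma~\ref{Lem:ExMDTwo} introduces the inverse operator $\boldB=\boldA^{-1}$, runs the Galerkin/Minty machinery for the \emph{dual} problem $\int_\Omega\boldB(x,\boldT+\boldF)\cdot\boldW\dx=0$ for $\boldW\in E^{M^*}_{\div}$ (where the good quantity $\boldB(\cdot,\boldT^k+\boldF)$ lands in $E^M=L^M$, so the critical duality pairing does converge), and then must reconstruct the solution: a de Rham/potential argument (mollification, $\nabla\boldp^k=\boldB^k$, Poincar\'e, constancy outside $\Omega$) producing $\boldu\in V^M_0$ with $\nabla\boldu=\boldB(\cdot,\boldT+\boldF)$ and zero trace, plus a separate argument (using that $\boldT\in L^{M^*}_{\div}$ and $M\in\Delta_2$) to show the weak formulation \eqref{EllPrWeakForm} holds for all $\bphi\in V^M_0$. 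Both the dual formulation and this reconstruction step are missing from your proposal, and without them the $M\in\Delta_2$ half of the theorem is not proved; note also that you never specify the solution/test space for this case, which must be $V^M_0$ rather than a closure of smooth functions.
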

The theorem above is stated vaguely on purpose without precise definition of function spaces and related problems. For the rigorous statement we refer to Lemma~\ref{Lem:ExUnMStDTwo} and Lemma~\ref{Lem:ExMDTwo} in Section~\ref{SS3}.

The second main goal of the paper is the homogenization process as $\varepsilon \to 0_+$. For this purpose we employ in addition the periodicity assumption~\ref{Aperi} (but without any requirement on continuity) and we shall also require certain uniform control on the corresponding $\varepsilon$--dependent $\mathcal{N}$--functions $M(\frac{x}{\varepsilon}, \boldxi)$, which is of the form:
\begin{enumerate}[label=(M\arabic*)]
	\item\label{MO} $M$ is $Y-$periodic in the first variable,
		\item\label{MTh} there exist ${\mathcal N}-$functions $m_1,m_2:[0,\infty)\rightarrow[0,\infty)$ such that
	\begin{equation*}
			m_1(|\boldxi|)\leq M(y,\boldxi)\leq m_2(|\boldxi|)\text{ on }Y.
	\end{equation*}
\end{enumerate}


The studies on homogenization of elliptic equations go back to the works of Oleinik and Zhikov \cite{OZ82} and Allaire \cite{Al92}. The setting of non-standard growth conditions  of the operator $\boldA$ already  appeared in  \cite{ZP11}, where the authors considered the growth  prescribed by means of variable exponent $p(x)$, so the corresponding function spaces were varying with respect to $\varepsilon\to 0$ in the homogenization process. Notice that in $L^{p(x)}$ setting they required that $1<p_{\min}\le p(x)\le p_{\max} < \infty$, so the corresponding functions spaces were reflexive and separable as well. The first attempt to deal with the $\mathcal{N}$--function not satisfying $\Delta_2$--condition, was done in \cite{BGKSG17}, where for the operator $\boldA$ fulfilling \ref{AO}--\ref{AF} and the function $M$ satisfying \ref{MO}--\ref{MTh} the limit $\varepsilon\to 0$ was successfully established provided that $M$ is log--H\"{o}lder continuous with respect to the first variable.

In this paper, we shall overcome this difficulty and show that even for discontinuous functions $M$ one can obtain the fairy complete theory provided that $M$ or $M^*$ satisfy $\Delta_2$ condition, but without any assumption on the continuity with respect to the spatial variable. Indeed, inspired by  \cite{ZP11,BGKSG17}, we show that the limit $\boldu$ of a sequence of solutions to \eqref{StudPr} satisfies a problem, in which the nonlinear operator is independent of a spatial variable, i.e., the problem possesses the form
\begin{equation}\label{HPr}
	\begin{alignedat}{2}	
			\div\hat\boldA(\nabla \boldu)&=\div\boldF&&\text{ in }\Omega,\\
			\boldu&=0 &&\text{ on }\partial\Omega,
	\end{alignedat}
\end{equation}
where, denoting $Y:=(0,1)^d$, the operator $\hat\boldA$ is defined as
\begin{equation*}
	\hat\boldA(\boldxi):=\int_Y\boldA(y,\boldxi+\boldW(y))\dy,
\end{equation*}
and $\boldW$ is the solution of the cell problem, i.e., $\boldW:=\nabla\boldw$ with $Y$-periodic $\boldw:\Rd \to \eR^N$ solving
\begin{equation*}
	\div\boldA(y,\boldxi+\nabla \boldw(y))=0\text{ in }Y.
\end{equation*}
Notice, that the existence and the uniqueness of the solution  to the cell problem can be obtained by a straightforward modification of the proof of Theorem~\ref{Thm:MainOne}. The second main result of the paper then reads as follows.
\begin{Theorem}\label{Thm:MainTwo}
Let $\boldA$ satisfy \ref{AO}-\ref{AF}, the ${\mathcal N}-$function $M$ satisfy \ref{MO}-\ref{MTh} and let at least one of the following hold:
\begin{enumerate}[label=(C\arabic*)]
	\item \label{MDeltaTwo} the $\mathcal{N}$--function $M$ satisfies $\Delta_2$--condition,
 \item \label{MStDeltaTwo} $M^*$, the convex conjugate $\mathcal{N}$--function  to $M$, satisfies $\Delta_2$--condition.
\end{enumerate}
Furthermore, assume that
\begin{equation}\label{Assumption:F}
\boldF\in L^{\infty}(\Omega; \eR^{d\times N})
\end{equation}
and for any $\varepsilon>0$ let $\ueps$  be a unique solution to the problem \eqref{StudPr}. Then for an arbitrary sequence $\{\varepsilon_j\}_{j=1}^\infty$ such that $\varepsilon_j\rightarrow 0$ as $j\rightarrow\infty$ we have the following convergence result
\begin{equation*}
	\boldu^{\varepsilon_j}\rightharpoonup \boldu\text{ in }W^{1,1}_0(\Omega;\eR^N),
\end{equation*}
where $\boldu^{\varepsilon_j}$ is the sequence of solutions solving  \eqref{StudPr} with $\varepsilon=\varepsilon_j$ and $\boldu$ is a unique solution to \eqref{HPr}, provided that either the considered problem is scalar, i.e., $N=1$, or the set $\Omega$ is star-shaped or the embedding
\begin{equation}\label{SobMOEmb}
	W^1_0 L^{m_1}(\Omega)\hookrightarrow L^{m_2}(\Omega) \textrm{ and }\ W^1 L^{m_1}(Y)\hookrightarrow L^{m_2}(Y)
\end{equation}
hold true.
\end{Theorem}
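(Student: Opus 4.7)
The plan is to combine uniform modular a priori estimates in Orlicz spaces with two-scale convergence and a Minty-type monotonicity argument, identifying the two-scale limit of $\boldA(x/\varepsilon,\nabla\ueps)$ with the cell-problem corrector; the uniqueness for \eqref{HPr} supplied by Theorem~\ref{Thm:MainOne} (applied to $\hat\boldA$) then upgrades the subsequential limit to a full limit.

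First, testing \eqref{StudPr} by $\ueps$ and using \ref{ATh} together with Fenchel--Young on $\boldF\cdot\nabla\ueps$ yields the uniform modular estimate
\begin{equation*}
\int_\Omega M\!\left(\tfrac{x}{\varepsilon},\nabla\ueps\right)\dx + \int_\Omega M^*\!\left(\tfrac{x}{\varepsilon},\boldA\!\left(\tfrac{x}{\varepsilon},\nabla\ueps\right)\right)\dx \le C(\|\boldF\|_\infty,\Omega).
\end{equation*}
From \ref{MTh} we have $m_1(|\boldxi|)\le M(y,\boldxi)$, so $\nabla\ueps$ is bounded in $L^{m_1}(\Omega;\eR^{d\times N})$; the Orlicz--Poincar\'e inequality then gives $\ueps$ bounded in $W^1_0L^{m_1}(\Omega;\eR^N)$, and since $L^{m_1}(\Omega)\hookrightarrow L^1(\Omega)$ on the bounded domain we get $\ueps$ uniformly bounded in $W^{1,1}_0(\Omega;\eR^N)$. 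Passing to a subsequence, $\ueps\rightharpoonup \boldu$ in $W^{1,1}_0$ and $\boldA(\cdot/\varepsilon,\nabla\ueps)\rightharpoonup\overline\boldA$ in $L^1(\Omega;\eR^{d\times N})$ for some limit satisfying $\div\overline\boldA=\div\boldF$.

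Next I would set up two-scale convergence in the Orlicz framework. Under \ref{MDeltaTwo} the separability of $L^M$ furnishes an $M$-two-scale limit for $\nabla\ueps$, while under \ref{MStDeltaTwo} the weak-$*$ two-scale compactness is carried by $\boldA^\varepsilon$ and the companion limit of $\nabla\ueps$ is extracted by duality. In either case one obtains a $Y$-periodic corrector $\boldw$, with $\boldw(x,\cdot)$ having zero mean over $Y$, such that $\nabla\ueps$ two-scale converges to $\nabla\boldu(x)+\nabla_y\boldw(x,y)$ and $\boldA^\varepsilon$ two-scale converges to some $\overline{\boldB}(x,y)$. Plugging test functions of the form $\bphi(x)+\varepsilon\bpsi(x,x/\varepsilon)$ with $\bpsi$ smooth and $Y$-periodic in the second variable into the weak formulation of \eqref{StudPr} and passing to the limit decouples it to
\begin{equation*}
\int_\Omega\!\int_Y \overline{\boldB}(x,y)\cdot\bigl(\nabla\bphi(x)+\nabla_y\bpsi(x,y)\bigr)\dy\dx = \int_\Omega \boldF\cdot\nabla\bphi\dx.
\end{equation*}
Taking $\bphi=0$ identifies $\boldw(x,\cdot)$ as a solution of the cell problem with frozen macroscopic gradient $\nabla\boldu(x)$, and taking $\bpsi=0$ gives $\overline\boldA(x)=\int_Y\overline{\boldB}(x,y)\dy$.

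The main obstacle is the nonlinear identification $\overline{\boldB}(x,y)=\boldA(y,\nabla\boldu(x)+\nabla_y\boldw(x,y))$, after which $\overline\boldA(x)=\hat\boldA(\nabla\boldu(x))$ and $\boldu$ solves \eqref{HPr}. I would carry it out by a two-scale Minty argument: the energy identity
\begin{equation*}
\int_\Omega \boldA\!\left(\tfrac{x}{\varepsilon},\nabla\ueps\right)\cdot\nabla\ueps\dx=\int_\Omega \boldF\cdot\nabla\ueps\dx
\end{equation*}
passes on its right-hand side to $\int_\Omega\boldF\cdot\nabla\boldu\dx=\int_\Omega\!\int_Y\overline{\boldB}\cdot(\nabla\boldu+\nabla_y\boldw)\dy\dx$, which combined with the monotonicity \ref{AF} applied to the perturbations $\nabla\boldu+\nabla_y\boldw+t\boldeta$ and the limit $t\to 0$ forces the pointwise identity. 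The delicate point is that to legitimately test by $\boldu$ itself rather than by a smooth surrogate, one needs modular density of smooth compactly supported functions in $W^1_0L^M(\Omega;\eR^N)$, a property that can fail in non-reflexive vectorial Orlicz--Sobolev spaces (Gossez). This density is precisely what is supplied by each of the three alternatives in the theorem: the scalar case $N=1$ reduces to the classical Gossez density, star-shapedness of $\Omega$ enables a dilation-plus-mollification argument, and the embedding \eqref{SobMOEmb} provides the compactness needed to mollify \emph{en route}. With the identification in hand, $\boldu$ solves \eqref{HPr} in the rigorous sense of Section~\ref{SS3}, and applying Theorem~\ref{Thm:MainOne} to the homogenized operator $\hat\boldA$ (which inherits \ref{AO}, \ref{ATh}, \ref{AF}) yields uniqueness, so the whole family $\{\ueps\}$ converges to $\boldu$.
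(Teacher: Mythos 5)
Your overall skeleton coincides with the paper's: uniform modular estimates, weak$^*$ two--scale limits of $\nabla\ueps$ and of $\boldA(\cdot/\varepsilon,\nabla\ueps)$, the key limit identity $\int_\Omega\bar\boldA\cdot\nabla\boldu\dx=\int_\Omega\boldF\cdot\nabla\boldu\dx$ obtained by approximating $\boldu$ in the modular topology by smooth compactly supported functions (this is exactly where the alternatives $N=1$, star--shapedness and \eqref{SobMOEmb} enter, via truncation, a partition of unity into star--shaped pieces and the construction of \cite{GSG11}), then a Minty-type identification with truncation sets, and finally uniqueness of the homogenized problem to upgrade subsequential convergence. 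However, two steps are asserted in a form that is not available in the non-reflexive setting, and they are precisely the points the paper's annihilator machinery is built to handle. First, you write the two--scale limit of $\nabla\ueps$ as $\nabla\boldu(x)+\nabla_y\boldw(x,y)$ with $\boldw(x,\cdot)$ periodic. Two--scale compactness only gives $\nabla\boldu+\boldU$ with $\int_Y\boldU(x,y)\cdot\bpsi(y)\dy=0$ for smooth periodic divergence-free $\bpsi$; upgrading this to a gradient is not automatic here. Under \ref{MStDeltaTwo} the paper can only place $\boldU(x,\cdot)$ in $G^{\bot\bot}$, which is not known to consist of gradients (the spaces $W^1_{per}L^M$ can be ``too small''), and under \ref{MDeltaTwo} the gradient structure of $D^\bot$ is the nontrivial characterization \eqref{DBotChar}; in the paper the identification of $\boldU(x,\cdot)$ with $\nabla\boldw_{\nabla\boldu(x)}$ is a \emph{consequence} of the Minty step, not an input. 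Relatedly, taking $\bphi=0$ in your decoupled two--scale formulation does not identify $\boldw$ as a cell solution: it only says that $\overline{\boldB}(x,\cdot)$ is divergence-free in $y$ (i.e.\ $\boldA^0(x,\cdot)\in G^\bot$, resp.\ $D^{\bot\bot}$); the cell problem emerges only after the nonlinear identification.

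Second, your passage from $\int_\Omega\boldF\cdot\nabla\boldu\dx$ to $\int_\Omega\int_Y\overline{\boldB}\cdot(\nabla\boldu+\boldU)\dy\dx$ tacitly uses the orthogonality $\int_Y\boldA^0(x,y)\cdot\boldU(x,y)\dy=0$ for the non-smooth limit objects. This again rests on establishing $\boldU(x,\cdot)\in G^{\bot\bot}$ (resp.\ $D^\bot$) and $\boldA^0(x,\cdot)\in G^\bot$ (resp.\ $D^{\bot\bot}$) for the limits themselves — in the scalar case the paper needs an additional truncation argument with $T_h u^k$ and modular convergence just for this — together with the duality identities $f=f^{**}$ and $h^*=h^{***}$ that define the spaces $V_0^f$, $V_0^{h^{**}}$ in which $\boldu$ lives; this is where the $\Delta_2$ assumption on $M$ or $M^*$ is actually consumed. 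You locate the role of the three geometric/embedding alternatives correctly, but without the $G^\bot$/$G^{\bot\bot}$ (resp.\ $D^\bot$/$D^{\bot\bot}$) framework, and without the intermediate extension of the limit monotonicity inequality from smooth two--scale test fields to $L^\infty$ fields, the Minty argument as sketched cannot be closed.
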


The paper is organized as follows. In Subsection~\ref{FSp} we introduce the function spaces corresponding to our setting, Subsection~\ref{homo} is related to introduction of tools used for homogenization and in Subsection~\ref{mapp} the properties of the homogenized operator $\hat\boldA$ are discussed. Section~\ref{SS3} is devoted to the proof of Theorem~\ref{Thm:MainOne} in the case that $M$ satisfies $\Delta_2$--condition and \eqref{Cis1}, while Section~\ref{Sec:1.2} is devoted  to the proof of Theorem~\ref{Thm:MainTwo}. Finally, for the sake of reader's convenience, we collect all other tools and known results needed in the paper in Appendices. Appendix~\ref{Ape1} is devoted to the introduction of the general Musielak--Orlicz spaces, Appendix~\ref{Ape2} to certain functional--analytic tools and Appendix~\ref{Ape3} to the part of the proof of Theorem~\ref{Thm:MainOne} in case that $M^*$ satisfies $\Delta_2-$condition and \eqref{Cis2} is true.

\section{Preliminaries}
This section is devoted to the preliminary observations and to the introduction of tools needed later for proofs of the main results of the paper. In Subsection~\ref{FSp} we introduce the function spaces related to the problem we are interested in. Next, in Subsection~\ref{homo}, we recall tools used in the homogenization theory and finally, in Subsection~\ref{mapp}, we establish the properties of the homogenized operator $\hat\boldA$ as well as the properties of the related function space.

\subsection{Function spaces related to the problem}\label{FSp}
Since we deal with rather general function spaces and  growth conditions imposed on the nonlinearity $\boldA$, we recall in Appendix~\ref{Ape1} several facts about the Musielak--Orlicz spaces $L^M$ and $E^M$,  we refer the interested reader to \cite{S05,SI69} for more details. Throughout this section, $\Omega\subset\eR^d$ will be a bounded domain and $Y:=(0,1)^d$. For an $\mathcal{N}$--function $M:Y\times \eR^{d\times N}\to \eR_+$, we use the subscript $y$ to underline  the role of $y$ for the Musielak--Orlicz spaces $L^{M_y}(\Omega\times Y;\eR^{d\times N})$ and $E^{M_y}(\Omega\times Y;\eR^{d\times N})$, which we  endow with the Luxembourg norm
\begin{equation*}
	\|\boldv\|_{L^{M_y}}=\|\boldv\|_{E^{M_y}}:=\inf\left\{\lambda>0:\int_{\Omega}\int_Y M\left(y,\frac{\boldv(x,y)}{\lambda}\right)\dy \dx\leq 1\right\}.
\end{equation*}
We note that whenever a function dependent on a variable from $Y$ appears, it is always $Y-$periodic although the $Y-$periodicity might not be emphasized. We further denote the spaces of smooth periodic or compactly supported functions and their solenoidal analogues as
$$
\begin{aligned}
C^\infty_{per}(Y;\eR^{N})&:=\{\boldv\in C^\infty(\Rd; \eR^{N}): \boldv\ \text{is}\ Y\text{-periodic}\},\\
C^\infty_{c}(\Omega;\eR^{N})&:=\{\boldv\in C^\infty(\Rd; \eR^{N}): \textrm{supp }\boldv \textrm{ is compact in } \Omega\},\\
C^\infty_{per,\div}(Y;\eR^{N})&:=\{\boldv\in C^\infty_{per}(Y; \eR^{N}): \div\boldv=0\text{ in }Y\},\\
C^\infty_{c,\div}(\Omega;\eR^{N})&:=\{\boldv\in C_c^\infty(\Omega; \eR^{N}): \div\boldv=0\text{ in }\Omega\}
\end{aligned}
$$
and naturally also the corresponding Bochner spaces $C^\infty_c\left(\Omega;C^\infty_{per}(Y)\right)$. Then the standard Sobolev spaces are defined as
$$
W^{1,1}_0(\Omega;\eR^N):=\overline{\{\boldv\in C^{\infty}_c(\Omega;\eR^N)\}}^{\|\cdot \|_{1,1}}, \qquad W^{1,1}_{per}(Y;\eR^N):=\overline{\{\boldv\in C^\infty_{per}(Y;\eR^N); \, \int_{Y}\boldv =0\}}^{\|\cdot \|_{1,1}}.
$$
Moreover, due to the Poincar\'{e} inequality, we always choose an equivalent norm on $W^{1,1}_0$ and $W^{1,1}_{per}$ as $\|\boldv\|_{1,1}:=\|\nabla \boldv\|_1$. Further, we  define the corresponding Sobolev--Musielak--Orlicz spaces and we also distinguish among them similarly as in the case of Musielak--Orlicz spaces. That means, we introduce the following notation
\begin{align*}
W^1_0E^M(\Omega;\eR^N)&:=\overline{C^\infty_c(\Omega;\eR^N)}^{\|\cdot \|_{W^{1}_0L^M(\Omega)}}, \quad W^1_{per}E^M(Y):=\overline{\left\{\boldv\in C^\infty_{per}(Y;\eR^N): \int_Y\boldv=0\right\}}^{\|\cdot\|_{W^{1}_{per}L^M(Y)}},
\end{align*}
where we endow the spaces with the norm\footnote{The fact, that it is indeed a norm, is a direct consequence of the Poincar\'{e} inequality in $W^{1,1}$.} $\|\boldv\|_{W^1_0 L^M(\Omega)}:=\|\nabla\boldv\|_{L^M(\Omega)}$ and $\|\boldv\|_{W^1_{per}L^M(Y)}:=\|\nabla\boldv\|_{L^M(Y)}$. In addition, we introduce the weak$^*$ closures of the above defined spaces, i.e.,
\begin{align*}
	W^{1}_0L^M(\Omega;\eR^N):=\{&\boldu\in W^{1,1}_0(\Omega;\eR^N) : \nabla\boldu\in L^M(\Omega;\eR^{d\times N});\; \exists \{\boldu^n\}_{n=1}^{\infty}\subset C^\infty_c(\Omega;\eR^N): \\&\nabla\boldu^n\WSCon\nabla\boldu\text{ in }L^M(\Omega;\eR^{d\times N})\},\\
	W^{1}_{per}L^M(Y;\eR^N):=\{&\boldu\in W^{1,1}_{per}(Y;\eR^N):  \nabla\boldu\in L^M(Y;\eR^{d\times N});\; \exists \{\boldu^n\}_{n=1}^{\infty}\subset C^\infty_{per}(Y;\eR^N),\int_Y\boldu^n=0: \\&\nabla\boldu^n\WSCon\nabla\boldu\text{ in }L^M(Y;\eR^{d\times N})\}.
\end{align*}
The above spaces are usually referred as the Sobolev--Musielak--Orlicz spaces. However, as will be shown later, these spaces can be  too small in principle and therefore we introduce a different class of Sobolev--Musielak--Orlicz spaces by
\begin{align*}
	V^M_0&:=\left\{\boldv\in W^{1,1}_0(\Omega;\eR^N): \nabla \boldv\in L^M(\Omega;\eR^{d\times N})\right\},\\
	V^M_{per}&:=\left\{\boldv\in W^{1,1}_{per}(Y;\eR^N): \nabla \boldv\in L^M(Y;\eR^{d\times N})\right\}.
\end{align*}
These spaces will be again equipped with the norms $\|\boldv\|_{V_0^M}=\|\nabla\boldv\|_{L^M(\Omega)}$ and $\|\boldv\|_{V_{per}^M}=\|\nabla\boldv\|_{L^M(Y)}$, which makes them Banach spaces. Finally, we define the spaces of  mappings having zero divergence as
\begin{align*}
	E^M_{\div}(\Omega;\eR^{d\times N}):=\overline{\{C^\infty_{\div}(\Omega;\eR^{d\times N})\}}^{\|\cdot\|_{L^M(\Omega)}}, \quad E^M_{per,\div}(Y;\eR^{d\times N}):=\overline{\{C^\infty_{per,\div}(Y;\eR^{d\times N})\}}^{\|\cdot\|_{L^M(Y)}},
\end{align*}
and
\begin{align*}
	L^M_{\div}(\Omega;\eR^{d\times N}):=\{&\boldT\in L^M(\Omega;\eR^{d\times N});\; \exists \{\boldT^n\}_{n=1}^{\infty}\subset E^M_{\div}(\Omega;\eR^{d\times N}): \\&\boldT^n\WSCon\boldT\text{ in }L^M(\Omega;\eR^{d\times N})\},\\
	L^M_{per,\div}(Y;\eR^{d\times N}):=\{&\boldT\in L^{M}_{per}(Y;\eR^{d\times N});\; \exists \{\boldT^n\}_{n=1}^{\infty}\subset E^M_{per,\div}(Y;\eR^{d\times N}): \\&\boldT^n\WSCon\boldT\text{ in }L^M(Y;\eR^{d\times N})\},
\end{align*}
which are again Banach spaces.
Further, we utilize the following closed subspaces of $E^M(Y;\eR^{d\times N})$, $E^{M^*}(Y;\eR^{d\times N})$ respectively, and their annihilators
\begin{equation*}
	\begin{split}
		G&:=\{\nabla \boldw: \boldw\in W_{per}^1E^M(Y;\eR^{N}) \},\\
		G^\bot&:=\{\boldW\in L_{per}^{M^*}(Y;\eR^{d\times N}):\int_Y\boldW(y)\cdot\boldV(y)\dy=0\text{ for all }\boldV\in G\},\\
		D&:=E^{M^*}_{per,\div}(Y;\eR^{d\times N}),\\
		D^\bot&:=\{\boldW\in L_{per}^{M}(Y;\eR^{d\times N}):\int_Y\boldW(y)\cdot\boldV(y)\dy=0\text{ for all }\boldV\in D\}.
	\end{split}
\end{equation*}
We note that
\begin{equation}\label{DBotChar}
	D^\bot=\{\nabla\boldv:\boldv\in V^M_{per}\},
\end{equation}
which can be proven by modifying the procedure from Step 5 of the proof of Lemma~\ref{Lem:ExUnMStDTwo} from Appendix~\ref{Ape3}.

Finally, we introduce the notation for the second annihilators. However, at this point we have to already distinguish the cases whether $M$ or $M^*$ satisfy $\Delta_2$--condition. Hence, if $M^*$ satisfies $\Delta_2$--condition, then we know that $(L^{M^*})^*=L^M$ and we can observe that
\begin{equation}\label{FEqExpression}
G^{\bot\bot}=\{\boldW\in L^M(Y;\eR^{d\times N}): \int_Y \boldW(y)\cdot\boldV(y)\dy=0\ \text{ for all }\boldV\in G^{\bot}(Y)\}.
\end{equation}
Similarly, if $M$ satisfies $\Delta_2$--condition, then $(L^M)^*= L^{M^*}$ and we obtain
\begin{equation}\label{Dkk}
	D^{\bot\bot}=\{\boldW\in L^{M^*}_{per}(Y;\eR^{d\times N}): \int_Y \boldW(y)\cdot\boldV(y)\dy=0\ \text{ for all }\boldV\in D^\bot\}.
\end{equation}

\subsection{Standard tools used  for homogenization} \label{homo}
This section is devoted to the introduction of the two--scale convergence by means of periodic unfolding. This approach allows one to represent the weak two--scale convergence in terms of the standard weak convergence in a Lebesgue space on the product $\Omega\times Y$, details for the case of $L^p$ spaces can be found in~\cite{V06}. In the same manner the strong two--scale convergence is introduced. Since function spaces, which we are working with, provide only the weak$^*$ compactness of bounded sets, we introduce the weak$^*$ two--scale compactness. However, it turns out that this notion of convergence and some of its properties are sufficient for our purposes. We define functions $n:\eR\rightarrow \eZ$ and $N:\Rd\rightarrow \eZ^d$ as
	\begin{equation*}
			n(t)=\max\{n\in \eZ: n\leq t\}\ \forall t\in\eR,	N(x)=(n(x_1),\ldots, n(x_d)),\ \forall x\in\Rd.
	\end{equation*}
	
Then we have for any $x\in \Rd,\varepsilon>0$, a two--scale decomposition $x=\varepsilon\left(N\left(\frac{x}{\varepsilon}\right)+R\left(\frac{x}{\varepsilon}\right)\right)$, where $R$ is the reminder function. We also define for any $\varepsilon>0$ a two--scale composition function $S_\varepsilon:\Rd\times Y\rightarrow\Rd$ as $S_\varepsilon(x,y):=\varepsilon \left(N\left(\frac{x}{\varepsilon}\right)+y\right)$.
It follows immediately that
	\begin{equation}\label{TSCompUnifConv}
		S_\varepsilon(x,y)\rightarrow x\text{ uniformly in }\Rd\times Y\text{ as }\varepsilon\rightarrow 0
	\end{equation}
	since $S_\varepsilon(x,y)=x+\varepsilon\left(y-R\left(\frac{x}{\varepsilon}\right)\right)$. In the rest of the section we assume that $m$ is an ${\mathcal N}-$function.
\begin{Definition}
We say that a sequence of functions $\{v^\varepsilon\}\subset L^m(\Rd)$
\begin{enumerate}
	\item converges to $v^0$ weakly$^*$ two--scale in $L^m(\Rd\times Y)$, $v^\varepsilon\WTSSCon v^0$, if $v^\varepsilon\circ S_\varepsilon$ converges to $v^0$ weakly$^*$ in $L^m(\Rd\times Y)$,
	\item converges to $v^0$ strongly two--scale in $E^m(\Rd\times Y)$, $v^\varepsilon\STSCon v^0$, if $v^\varepsilon\circ S_\varepsilon$ converges to $v^0$ strongly in $E^m(\Rd\times Y)$.
\end{enumerate}
\end{Definition}
We define two--scale convergence in $L^m(\Omega\times Y)$ as two--scale convergence in $L^m(\Rd\times Y)$ for functions extended by zero to $\Rd\setminus\Omega$. The following lemma will be utilized to express properties of two--scale convergence in terms of single-scale convergence.
\begin{Lemma}[Lemma 1.1, \cite{V06}]\label{Lem:Decomp}
	Let $g$ be measurable with respect to a $\sigma-$algebra generated by the product of the $\sigma-$algebra of all Lebesgue--measurable subsets of $\eR^d$ and the $\sigma-$algebra of all Borel--measurable subsets of $Y$. Assume in addition that $g\in L^1(\eR^d;L^\infty_{per}(Y))$ and extend it by $Y-$periodicity to $\eR^d$ for a.a. $x\in\eR^d$. Then, for any $\varepsilon>0$, the function $(x,y)\mapsto g(S_\varepsilon(x,y),y)$ is integrable and
	\begin{equation*}
		\int_{\Rd}g\left(x,\frac{x}{\varepsilon}\right)\dx=\int_{\Rd}\int_Y g(S_\varepsilon(x,y),y)\dy\dx.
	\end{equation*}
\end{Lemma}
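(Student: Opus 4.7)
The natural strategy is to decompose $\eR^d$ into $\varepsilon$-translates of the unit cell and exploit the $Y$-periodicity of $g$ in the second variable, reducing the claim to a change of variables on each cell. For $k\in\eZ^d$ I set $Y_\varepsilon^k:=\varepsilon(k+Y)$; the family $\{Y_\varepsilon^k\}_{k\in\eZ^d}$ is an essentially disjoint countable covering of $\eR^d$. On $Y_\varepsilon^k$ one has $N(x/\varepsilon)=k$ by definition of $N$, hence $R(x/\varepsilon)=x/\varepsilon-k\in Y$ and $S_\varepsilon(x,y)=\varepsilon(k+y)$ is constant in $x\in Y_\varepsilon^k$. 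The plan is to verify the identity cell-by-cell and then sum over $k$.

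The cell-wise computation I would carry out is as follows. Using the $Y$-periodicity of $g(x,\cdot)$, one has $g(x,x/\varepsilon)=g(x,R(x/\varepsilon))$ on $Y_\varepsilon^k$. A change of variables $y:=x/\varepsilon-k$, so $x=\varepsilon(k+y)$ and $\dx=\varepsilon^d\dy$, then gives
\begin{equation*}
\int_{Y_\varepsilon^k}g\!\left(x,\frac{x}{\varepsilon}\right)\dx=\varepsilon^d\int_Y g(\varepsilon(k+y),y)\dy=\int_{Y_\varepsilon^k}\int_Y g(S_\varepsilon(x,y),y)\dy\dx,
\end{equation*}
where the last equality uses that the right-hand integrand is independent of $x\in Y_\varepsilon^k$ together with $|Y_\varepsilon^k|=\varepsilon^d$. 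Summing over $k\in\eZ^d$ and invoking Fubini on the right produces the claimed identity, provided both sides are absolutely convergent and the inner integrand is jointly measurable in $(x,y)$.

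The main obstacle, and really the only nontrivial point, is the measurability and integrability of $(x,y)\mapsto g(S_\varepsilon(x,y),y)$. The map $x\mapsto S_\varepsilon(x,y)$ is only piecewise constant and Borel, so joint measurability of the composition is not automatic for a $g$ that is merely Lebesgue-measurable in its first slot; this is precisely the reason for the hypothesis that $g$ be measurable with respect to the product of the Lebesgue $\sigma$-algebra on $\eR^d$ and the Borel $\sigma$-algebra on $Y$. To handle this I would verify measurability cell-by-cell by observing that on $Y_\varepsilon^k\times Y$ the function $(x,y)\mapsto g(\varepsilon(k+y),y)$ depends on $y$ alone and is measurable there (the bi-Lipschitz map $y\mapsto\varepsilon(k+y)$ pulls Lebesgue-measurable sets back to Lebesgue-measurable sets in $Y$), and then paste the cells together. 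Once measurability is secured, the same cell-wise calculation applied to $|g|$ yields
\begin{equation*}
\int_{\Rd}\int_Y|g(S_\varepsilon(x,y),y)|\dy\dx=\int_{\Rd}\left|g\!\left(x,\frac{x}{\varepsilon}\right)\right|\dx\le\|g\|_{L^1(\eR^d;L^\infty_{per}(Y))},
\end{equation*}
which justifies the interchange of summation and integration and completes the argument. Alternatively, one could first establish the identity for tensor products $g(x,y)=\varphi(x)\psi(y)$ with $\varphi\in C_c(\eR^d)$, $\psi\in C_{per}(Y)$, where measurability is trivial, and then extend by a monotone class argument using the above bound as uniform control.
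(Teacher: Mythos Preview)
The paper does not prove this lemma at all; it is quoted verbatim from \cite{V06} (Visintin, Lemma~1.1) and simply cited as a tool. Your cell-decomposition argument is correct and is precisely the standard proof that Visintin gives: tile $\eR^d$ by the cubes $\varepsilon(k+Y)$, observe that $S_\varepsilon(\cdot,y)$ is constant on each cube, use $Y$-periodicity to rewrite $g(x,x/\varepsilon)=g(x,R(x/\varepsilon))$, change variables $y=x/\varepsilon-k$, and sum. Your treatment of the measurability issue (the reason for the mixed Lebesgue--Borel hypothesis) and of the $L^1(\eR^d;L^\infty_{per}(Y))$ bound that justifies the summation is also accurate and matches the care taken in the original reference.
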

Several useful properties of the two--scale convergence are summarized in the following lemma.
\begin{Lemma}[Lemma~2.6, \cite{BGKSG17}]\label{Lem:Facts2S}
Assume that $m:[0,\infty)\rightarrow[0,\infty)$ is an ${\mathcal N}-$function.
\begin{enumerate}[label=(\roman*)]
	\item\label{F2SFir} Let $v:\Omega\times Y\rightarrow\eR$ be  Carath\'eodory, $v\in E^m(\Omega\times Y)$, $v$ be $Y-$periodic, define $v^\varepsilon(x)=v(x,\frac x\varepsilon)$ for $x\in\Omega$. Then $v^\varepsilon\STSCon v$ in $E^m(\Omega\times Y)$ as $\varepsilon\rightarrow 0$.
	\item\label{F2SS} Let $v^\varepsilon\WTSSCon v^0$ in $L^m(\Omega\times Y)$ then $v^\varepsilon\rightharpoonup^* \int_Y v^0(\cdot,y)\dy$ in $L^m(\Omega)$.
		\item\label{F2ST} Let $v^{\varepsilon}\WTSSCon v^0$ in $L^{m}(\Omega\times Y)$ and $w^{\varepsilon}\STSCon w^0$ in $E^{m^*}(\Omega\times Y)$ then $\int_\Omega v^{\varepsilon} w^{\varepsilon}\to\int_\Omega\int_Yv^0w^0$.
	\item\label{F2SFo} Let $v^{\varepsilon}\WTSSCon v^0$ in $L^m(\Omega\times Y)$ then for any $\psi \in C^\infty_c\left(\Omega;C^\infty_{per}(Y)\right)$
	\begin{equation*}
		\lim_{\varepsilon\rightarrow 0}\int_\Omega v^{\varepsilon}(x) \psi\left(x,\frac{x}{\varepsilon}\right)\dx=\int_\Omega\int_Y v^0(x,y)\psi(x,y)\dy\dx.
	\end{equation*}
	\item\label{F2SFi} Let $\{v^\varepsilon\}$ be a bounded sequence in $L^m(\Omega)$. Then there is $v^0\in L^m(\Omega\times Y)$ and a sequence $\varepsilon_k\to0$ as $k\to\infty$ such that $v^{\varepsilon_k}\WTSSCon v^0$ in $L^m(\Omega\times Y)$ as $k\to\infty$.
	\item\label{F2SSi} Let $\{v^{\varepsilon}\}$ converge weakly$^*$ to $v$ in $W_0^1L^m(\Omega)$. Then $v^{\varepsilon}\WTSSCon v$ in $L^m(\Omega\times Y)$ and there is a sequence $\varepsilon_k\rightarrow 0$ as $k\rightarrow \infty$ and $\mathbf{v}\in L^m(\Omega\times Y;\Rd)$ such that $\nabla v^{\varepsilon_k}\WTSSCon\nabla v+\boldv$ in $L^m(\Omega\times Y;\Rd)$ as $k\to\infty$ and
	\begin{equation*}
		\int_Y \mathbf{v}(x,y)\cdot \bpsi(y)\dy=0
	\end{equation*}
 for a.a. $x\in\Omega$ and any $\bpsi\in C^\infty_{per}(Y;\Rd),\div \bpsi=0$ in $Y$.	
\item\label{F2SSev}  Let $\Phi:\Rd\times \eR^{d\times N} \rightarrow \eR$ satisfy:
\begin{enumerate}[label=(\alph*)]
	\item $\Phi$ is Carath\'eodory,
	\item $\Phi(\cdot,\boldxi)$ is $Y-$periodic for any $\boldxi\in\eR^{d\times N}$, $\Phi(y,\cdot)$ is convex for almost all $y\in Y$,
	\item $\Phi\geq 0$, $\Phi(\cdot,0)=0$.
\end{enumerate}
If $\boldUeps\WTSSCon \boldU$ in $L^m(\Omega\times Y;\eR^{d\times N})$ then
\begin{equation*}
\liminf_{\varepsilon\rightarrow \infty}\int_{\Omega}\Phi\left(\frac{x}{\varepsilon},\boldUeps(x)\right)\dx\geq \int_{\Omega\times Y}\Phi(y,\boldU(x,y))\dy\dx.
\end{equation*}
\end{enumerate}
\end{Lemma}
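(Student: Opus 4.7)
The plan is to reduce every item to an ordinary weak$^*$ or strong convergence statement on $\Omega\times Y$ via the periodic unfolding identity of Lemma~\ref{Lem:Decomp}, which rewrites $\int f(x,x/\varepsilon)\dx$ as $\int\int f(S_\varepsilon(x,y),y)\dy\dx$, together with the uniform convergence $S_\varepsilon(x,y)\to x$ from \eqref{TSCompUnifConv}.

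For (i), I would first check strong two-scale convergence on the dense subclass $v(x,y)=\varphi(x)\chi(y)$ with $\varphi\in C_c(\Omega)$, $\chi\in C_{per}(Y)$, where $v\circ S_\varepsilon\to v$ even uniformly, and then extend to arbitrary Carath\'{e}odory $v\in E^m(\Omega\times Y)$ by density in the modular norm. Parts (ii)--(iv) follow from (i) together with the unfolding identity: testing $v^\varepsilon$ against $\phi\in E^{m^*}(\Omega)$ gives
\begin{equation*}
\int_\Omega v^\varepsilon\,\phi\dx=\int_\Omega\int_Y(v^\varepsilon\circ S_\varepsilon)\,(\phi\circ S_\varepsilon)\dy\dx,
\end{equation*}
and since $\phi\circ S_\varepsilon\to\phi$ strongly in $E^{m^*}(\Omega\times Y)$ by (i), the weak$^*$--strong pairing yields (ii); the same template, now with $w^\varepsilon\circ S_\varepsilon\to w^0$ strongly, gives (iii); for (iv) I use in addition that $Y$-periodicity of $\psi$ in the second slot makes $\psi(S_\varepsilon(x,y),x/\varepsilon)=\psi(S_\varepsilon(x,y),y)$, so that (i) applied to $\psi$ itself handles the strong factor.

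For (v), the unfolded sequence $\{v^\varepsilon\circ S_\varepsilon\}$ is bounded in $L^m(\Omega\times Y)$ by the unfolding identity, and $L^m(\Omega\times Y)$ is the dual of the separable space $E^{m^*}(\Omega\times Y)$, so Banach--Alaoglu supplies a weak$^*$ convergent subsequence. For (vi), I first note that $v^\varepsilon\WTSSCon v$: the two-scale limit is forced to be independent of $y$ because $v^\varepsilon\circ S_\varepsilon\to v$ strongly in $L^1(\Omega\times Y)$ by Rellich--Kondrachov applied to $W^{1,1}_0(\Omega)$ and the unfolding identity. Next, applying (v) to $\nabla v^\varepsilon$ gives a two-scale limit $\mathbf{g}\in L^m(\Omega\times Y;\Rd)$, whose $y$-average equals the weak$^*$ limit $\nabla v$ by (ii), so $\boldv:=\mathbf{g}-\nabla v$ has zero $y$-mean. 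To see that $\boldv(x,\cdot)$ is orthogonal to every smooth $Y$-periodic divergence-free $\bpsi$, test $\nabla v^\varepsilon$ against $\varphi(x)\bpsi(x/\varepsilon)$ with $\varphi\in C_c^\infty(\Omega)$ and integrate by parts: the potential $\varepsilon^{-1}$ term carries $(\div_y\bpsi)(x/\varepsilon)$ and vanishes identically, while the remaining $O(1)$ terms pass to the limit by (iv), yielding $\int_\Omega\varphi(x)\int_Y\boldv(x,y)\cdot\bpsi(y)\dy\dx=0$ for every $\varphi$. Finally, for (vii), unfolding combined with $Y$-periodicity of $\Phi(\cdot,\boldxi)$ gives
\begin{equation*}
\int_\Omega\Phi\!\left(\frac{x}{\varepsilon},\boldUeps(x)\right)\dx=\int_\Omega\int_Y\Phi\!\left(y,(\boldUeps\circ S_\varepsilon)(x,y)\right)\dy\dx,
\end{equation*}
and the inequality reduces to the classical weak$^*$ lower semicontinuity of nonnegative convex integral functionals on $L^m(\Omega\times Y;\eR^{d\times N})$ applied to $\boldUeps\circ S_\varepsilon\WSCon\boldU$.

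The main technical obstacle is (vi): extracting from mere $L^m$-boundedness of $\nabla v^\varepsilon$ the correct two-scale structure of the limit, and in particular the orthogonality class of the corrector. This hinges entirely on the cancellation of the otherwise singular $\varepsilon^{-1}$ term in the integration by parts, which is precisely why the orthogonality in the statement is phrased against divergence-free oscillating test fields.
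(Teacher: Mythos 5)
The paper itself gives no argument for this lemma --- it is quoted verbatim from \cite{BGKSG17} --- so your proposal has to be judged as a reconstruction of that unfolding-based proof, and in outline it follows the same route. Items (ii)--(vii) are essentially sound, with one caveat: the pairing and rewriting identities you invoke are not instances of Lemma~\ref{Lem:Decomp} as stated, since its hypothesis $g\in L^1(\eR^d;L^\infty_{per}(Y))$ fails for merely integrable integrands such as $\Phi(y,\boldUeps(x))$ or $v^\varepsilon w^\varepsilon$. What you actually need is the elementary measure-preserving property $\int_{\eR^d}h(x)\dx=\int_{\eR^d}\int_Y h(S_\varepsilon(x,y))\dy\dx$ (valid for $h\in L^1$ and, by monotone convergence, for nonnegative measurable $h$), obtained by decomposing $\eR^d$ into the cells $\varepsilon(k+Y)$ on which $S_\varepsilon(\cdot,y)$ is constant, combined with $Y$-periodicity in the fast slot. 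With that replacement your treatment of (ii)--(v) and (vii) goes through, and your proof of the orthogonality in (vi) via the cancellation of the $\varepsilon^{-1}(\div_y\bpsi)$ term is exactly the standard argument.

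The genuine gap is the extension step in (i). You verify strong two-scale convergence for finite sums of tensor products and then pass to general Carath\'eodory $v\in E^m(\Omega\times Y)$ ``by density''. To transfer the convergence from the approximations $v_n$ to $v$ you must control, uniformly in $\varepsilon$, the unfolded error $\int_\Omega\int_Y m\bigl(|(v-v_n)(S_\varepsilon(x,y),y)|/\lambda\bigr)\dy\dx$; but by the very identity you rely on this equals the trace modular $\int_\Omega m\bigl(|(v-v_n)(x,x/\varepsilon)|/\lambda\bigr)\dx$, which is \emph{not} controlled by $\|v-v_n\|_{E^m(\Omega\times Y)}$. The density argument is therefore circular: it presupposes exactly the uniform control of traces on the oscillating set that part (i) is meant to provide --- this is the classical ``admissibility'' issue for two-scale test functions, and it is the reason the Carath\'eodory hypothesis appears in the statement. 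A correct proof must use that hypothesis directly, e.g.\ deduce $v(S_\varepsilon(x,y),y)\to v(x,y)$ a.e.\ from \eqref{TSCompUnifConv} and continuity in the first variable, and then obtain modular convergence by a truncation plus Vitali (equi-integrability) argument rather than by abstract density; this is where the real work in \cite{BGKSG17} lies. Note the gap is confined to (i) itself: the special cases you actually use elsewhere --- $\psi\in C^\infty_c(\Omega;C^\infty_{per}(Y))$ in (iv), where uniform convergence suffices, and $y$-independent $\phi\in E^{m^*}(\Omega)$ in (ii), for which unfolding is a modular isometry and the density argument does close --- are unaffected.
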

\subsection{Properties of the cell problem} \label{mapp}
In this subsection, we investigate the properties of the homogenized operator $\hat\boldA$. Let us define an operator $\hat{\boldA}\!:\eR^{d\times N}\rightarrow\eR^{d\times N}$ as
\begin{equation}\label{AhDef}
	\hat{\boldA}(\boldxi)=\int_Y\boldA(y,\boldxi+ \nabla \boldw_{\boldxi})\dy
\end{equation}
where the $Y-$periodic function $\boldw_{\boldxi}$ is a unique solution of the following cell problem
\begin{equation}\label{CellPr}
	\div\boldA(y,\boldxi+\nabla \boldw_{\boldxi})=0\text{ in }Y.
\end{equation}
The most important properties of the operator are summarized in the following lemma.
\begin{Lemma}\label{L2.3}
	Let $Y=(0,1)^d$, the operator $\boldA$ satisfy \ref{AO}-\ref{ATh}, the ${\mathcal N}-$function $M$ satisfy \ref{MO}-\ref{MTh}. Assume that at least one of conditions \ref{MDeltaTwo} and \ref{MStDeltaTwo} holds. 	Then for arbitrary $\boldxi\in \eR^{d\times N}$, the problem \eqref{CellPr} admits a unique weak solution $\boldw_{\boldxi}$ such that
	\begin{itemize}
		\item[] if \ref{MStDeltaTwo} holds then $\boldw_{\boldxi}\in W^1_{per}L^M(Y;\eR^N)$ and
			\begin{equation}\label{CellPrWFMStDTw}
				\int_{Y} \boldA\left(y,\boldxi+\nabla \boldw_{\boldxi}(y)\right)\cdot\nabla\bphi(y)\dy=0 \qquad \textrm{ for all } \bphi\in W^1_{per}L^M\left(Y;\eR^{N}\right);
			\end{equation}
			\item[] if \ref{MDeltaTwo} holds then $\boldw_{\boldxi}\in V_{per}^M$ and
			\begin{equation}\label{CellPrWFMDTw}
				\int_{Y} \boldA\left(y,\boldxi+\nabla \boldw_{\boldxi}(y)\right)\cdot\nabla\bphi(y)\dy=0 \qquad \textrm{ for all }\bphi\in V_{per}^M.
			\end{equation}
	\end{itemize}
Moreover,
\begin{equation}\label{WSCont}
	\boldxi^k\rightarrow \boldxi\text{ in }\eR^{d\times N}\text{ implies }\boldA(\cdot,\boldxi^k+\nabla \boldw_{\boldxi^k})\WSCon\boldA(\cdot,\boldxi+\nabla \boldw_{\boldxi})\text{ in }L^{M^*}(Y;\eR^{d\times N}),
\end{equation}
where $\boldw_{\boldxi^k}$ is a solution of the cell problem corresponding to $\boldxi^k$ and $\boldw_{\boldxi}$ to $\boldxi$, respectively.
\begin{proof}
	 The existence and uniqueness of solution $\boldw_{\boldxi}$ can be obtained by a straightforward modification of Lemma~\ref{Lem:ExMDTwo} and Lemma~\ref{Lem:ExUnMStDTwo}, respectively.  We note that in the case when $M$ satisfies $\Delta_2$ condition we first deal with the existence of solution $\boldT\in L^{M^*}_{per,\div}(Y;\eR^{d\times N})$ of the problem
\begin{equation*}
			\int_Y (\boldB(y,\boldT(y))-\boldxi)\cdot\boldW(y)\dy=0 \textrm{ for all }\boldW\in E^{M^*}_{per,\div}(Y;\eR^{d\times N}),
\end{equation*}
where $\boldB$ is the inverse operator to $\boldA$ and there exists a sequence $\{\boldT^k\}_{k=1}^{\infty}\subset E^{M^*}_{per,\div}(Y;\eR^{d\times N})$ such that
$$
\boldT^k\WSCon \boldT \textrm{ in }  L^{M^*}(Y; \eR^{d\times N}).
$$
Having introduced such a $\boldT$, we  realize that there is $\boldw_{\boldxi}\in V^M_{per}$ such that $\nabla\boldw_{\boldxi}=\boldB(\cdot,\boldT)-\boldxi$ which solves~\eqref{CellPrWFMDTw}.
		
Now, we focus on \eqref{WSCont}. Let us assume that $\{\boldxi^k\}_{k=1}^\infty$ is such that $\boldxi^k\rightarrow\boldxi$ in $\eR^{d\times N}$ as $k\rightarrow\infty$. For simplicity, we abbreviate $\boldw^k$ the solution of the cell problem corresponding to $\boldxi^k$ and as $\boldw$ the solution corresponding to $\boldxi$. We also denote $\boldZ^k(y):=\boldA(y,\boldxi^k+\nabla \boldw^k(y))$. First, we show that
		\begin{equation}\label{ZNEst}
			\int_YM(y,\boldxi^k+\nabla \boldw^k)+M^*(y,\boldZ^k(y))\dy\leq c.
		\end{equation}
		Since $\boldw^k$ is an admissible test function in \eqref{CellPrWFMStDTw}, \eqref{CellPrWFMDTw} respectively, with $\boldxi=\boldxi^k$, we obtain
		\begin{equation}\label{TestCPrSol}
			\int_Y \boldZ^k\cdot\nabla \boldw^k=0.
		\end{equation}
		Using \ref{ATh}, \eqref{TestCPrSol} and the Young inequality yields		
		\begin{equation*}
			\begin{split}
			c&\int_Y M^*(y,\boldZ^k(y))+M(y,\boldxi^k+\nabla \boldw^k)\dy\leq \int_Y \boldZ^k\cdot(\boldxi^k+\nabla \boldw^k)\dy=\int_Y \boldZ^k\cdot\boldxi^k\dy\\
			&\leq \frac{c}{2}\int_Y M^*(y,\boldZ^k)\dy+\int_Y M\left(y,\frac{2}{c}\boldxi^k\right)\dy,
			\end{split}
		\end{equation*}
		the second integral on the right hand side is finite due to \ref{MTh} as $\{\boldxi^k\}_{k=1}^\infty$ is bounded.	Without loss of generality, we can assume that
		\begin{equation}\label{WSConWNZN}
				\boldZ^k\WSCon\boldZ\text{ in }L^{M^*}_{per}(Y;\eR^{d\times N})
		\end{equation}
as $k\rightarrow\infty$. We shall show that $\boldZ=\boldA(\cdot,\boldxi+\nabla \boldw)$ and that \eqref{CellPrWFMStDTw} or \eqref{CellPrWFMDTw}, respectively, holds true.
		
Indeed, let $\boldw$ be a weak solution corresponding to $\boldxi$, which exists according to the first part of the Lemma. Using the monotonicity of $\boldA$, the fact that $\boldw$ and $\boldw^k$ are solutions and uniform estimate~\eqref{ZNEst}, we have
$$
\begin{aligned}
&\int_Y \left| (\boldA (y, \boldxi^k+\nabla \boldw^k(y)) - \boldA (y,\boldxi + \nabla \boldw(y)))\cdot (\boldxi^k+\nabla \boldw^k(y)-\boldxi - \nabla \boldw(y) )\right| \dy\\
&\quad =\int_Y (\boldA (y, \boldxi^k+\nabla \boldw^k(y)) - \boldA (y,\boldxi + \nabla \boldw(y)))\cdot (\boldxi^k+\nabla \boldw^k(y)-\boldxi - \nabla \boldw(y) ) \dy\\
&\quad =\int_Y (\boldA (y, \boldxi^k+\nabla \boldw^k(y)) - \boldA (y,\boldxi + \nabla \boldw(y)))\cdot (\boldxi^k - \boldxi ) \dy \le C|\boldxi^k - \boldxi|.
\end{aligned}
$$
Thus, letting $k\to \infty$ and using the strict monotonicity of $\boldA$, see \ref{AF}, we obtain
$$
\nabla \boldw^k \to \nabla \boldw \qquad \textrm{a.e. in } Y
$$
and due to the continuity of $\boldA$ with respect to the second variable, we get that $\boldZ(y)=\boldA(y,\boldxi+\nabla \boldw(y))$. Since this solution is unique, we obtain  that not only a subsequence extracted from $\{\boldZ^k\}_{k=1}^\infty$  converges weakly$^*$ to $\boldA(\cdot,\boldxi+\nabla \boldw)$ in $L^{M^*}(Y;\eR^{d\times N})$ but also the whole sequence converges to the same limit, which finishes the proof of \eqref{WSCont}.
\end{proof}
\end{Lemma}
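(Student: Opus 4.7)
The existence and uniqueness of $\boldw_\boldxi$ for each fixed $\boldxi$ is a routine variant of the Dirichlet problem addressed by Lemma~\ref{Lem:ExMDTwo} and Lemma~\ref{Lem:ExUnMStDTwo}: I would replace $\Omega$ by $Y$, the homogeneous Dirichlet trace by $Y$-periodicity, and absorb $\boldxi$ via the shift $\boldu(y)=\boldxi y+\boldw(y)$, so the right-hand side $\div\boldF$ is replaced by $0$. Under \ref{MStDeltaTwo} one runs a Galerkin plus Minty-type monotonicity argument in $W^1_{per}L^M(Y;\eR^N)$; under \ref{MDeltaTwo} one instead uses the dual approach and first solves, using $\boldB(y,\cdot):=\boldA(y,\cdot)^{-1}$ (well defined by \ref{AF}), for $\boldT\in L^{M^*}_{per,\div}(Y;\eR^{d\times N})$ the relation $\int_Y(\boldB(y,\boldT)-\boldxi)\cdot\boldW\dy=0$ on $E^{M^*}_{per,\div}$, then recovers $\nabla\boldw_\boldxi:=\boldB(\cdot,\boldT)-\boldxi\in D^\perp$ via \eqref{DBotChar}.

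For the continuity claim \eqref{WSCont}, set $\boldZ^k(y):=\boldA(y,\boldxi^k+\nabla\boldw^k(y))$ with $\boldw^k:=\boldw_{\boldxi^k}$. First, I would extract uniform estimates by self-testing: plugging $\boldw^k$ into its own equation gives $\int_Y\boldZ^k\cdot\nabla\boldw^k\dy=0$, so combining with \ref{ATh} and the Fenchel--Young inequality,
\begin{equation*}
c\int_Y\bigl(M(y,\boldxi^k+\nabla\boldw^k)+M^*(y,\boldZ^k)\bigr)\dy\le\int_Y\boldZ^k\cdot\boldxi^k\dy\le\frac{c}{2}\int_Y M^*(y,\boldZ^k)\dy+\int_Y M\!\left(y,\frac{2\boldxi^k}{c}\right)\dy,
\end{equation*}
with the last integrand bounded via \ref{MTh} by $m_2(2|\boldxi^k|/c)$. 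Hence $\{\boldZ^k\}$ is bounded in $L^{M^*}_{per}(Y;\eR^{d\times N})$ and, along a subsequence, $\boldZ^k\rightharpoonup^*\boldZ$ there; moreover $\int_Y|\boldZ^k|\dy\le C$ by the Fenchel--Young inequality $|\boldZ^k|\le M^*(y,\boldZ^k)+M(y,1)$ combined with \ref{MTh}.

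Identification of $\boldZ$ would then proceed by a Minty-type comparison with the solution $\boldw$ at $\boldxi$. The decisive observation is that the test-function space for \eqref{CellPrWFMStDTw}, respectively \eqref{CellPrWFMDTw}, is independent of $\boldxi$, so both $\boldw$ and $\boldw^k$ are admissible test functions in each other's cell equations. Thus $\int_Y\boldZ^k\cdot\nabla\boldw\dy=0$ and $\int_Y\boldA(y,\boldxi+\nabla\boldw)\cdot\nabla\boldw^k\dy=0$, and expanding the monotonicity bracket
\begin{equation*}
\int_Y\bigl(\boldA(y,\boldxi^k+\nabla\boldw^k)-\boldA(y,\boldxi+\nabla\boldw)\bigr)\cdot\bigl(\boldxi^k+\nabla\boldw^k-\boldxi-\nabla\boldw\bigr)\dy,
\end{equation*}
the four gradient--gradient terms cancel, leaving only the terms linear in $\boldxi^k-\boldxi$, which are bounded by $C|\boldxi^k-\boldxi|\to 0$. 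By \ref{AF} the integrand is nonnegative, so it tends to zero in $L^1(Y)$; strict monotonicity of $\boldA$ and the continuity \ref{AO} then yield $\nabla\boldw^k\to\nabla\boldw$ a.e.\ in $Y$, hence $\boldZ^k(y)\to\boldA(y,\boldxi+\nabla\boldw(y))$ pointwise, matching the weak-$*$ limit, so $\boldZ=\boldA(\cdot,\boldxi+\nabla\boldw)$. Uniqueness of the limit upgrades this to convergence of the whole sequence.

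The main obstacle will be precisely the legitimacy of the cross tests in the nonreflexive regime: one must verify that the test space depends only on the functional setting ($W^1_{per}L^M$ or $V^M_{per}$), not on $\boldxi$, and that the $L^1(Y)$ bound on $\{\boldZ^k\}$ is strong enough to let the only surviving term $\int_Y(\boldZ^k-\boldZ)\cdot(\boldxi^k-\boldxi)\dy$ vanish. Both reduce to a careful use of the Fenchel--Young inequality and to a $\boldxi$-independent construction of the test spaces in the existence step.
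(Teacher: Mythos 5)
Your proposal follows essentially the same route as the paper: existence via the periodic analogues of Lemma~\ref{Lem:ExUnMStDTwo} and Lemma~\ref{Lem:ExMDTwo} (with the dual problem for $\boldT$ and $\nabla\boldw_{\boldxi}=\boldB(\cdot,\boldT)-\boldxi$ when $M$ satisfies $\Delta_2$), the self-testing estimate \eqref{ZNEst} with \ref{ATh} and Young's inequality, weak$^*$ compactness of $\{\boldZ^k\}$, and then the monotonicity bracket in which the gradient--gradient terms cancel because $\boldw$ and $\boldw^k$ live in the same $\boldxi$-independent test space, leaving a term of order $|\boldxi^k-\boldxi|$, whence a.e.\ convergence of $\nabla\boldw^k$, identification of $\boldZ$ by continuity of $\boldA$, and whole-sequence convergence by uniqueness. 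This is exactly the paper's argument, so the plan is correct as it stands.
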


We finish this subsection by introducing the function spaces related to the homogenized operator $\hat\boldA$. These function spaces and their properties rely on the fact whether $M$ or $M^*$ satisfy $\Delta_2$--condition. Therefore, all results below will be always split into two cases. In case that $M^*$ satisfies $\Delta_2$--condition, we just follow \cite{BGKSG17} and state all results without proofs. On the other hand, since the case when $M$ satisfies $\Delta_2$--condition is different, we provide all details for this situation.

We start with the function spaces related to $\hat\boldA$ in case that $M^*$ satisfies $\Delta_2$--condition.
We define a functional $f:\eR^{d\times N}\rightarrow [0,\infty)$ as
\begin{equation}\label{FDefinition}
	f(\boldxi)=\inf_{\boldW\in G}\int_YM(y,\boldxi+\boldW(y))\dy.
\end{equation}
Then, the basic properties of $f$ are stated in next lemma.
\begin{Lemma}[Lemma 2.8, \cite{BGKSG17}]\label{Lem:FProp}
	Let the $\mathcal{N}$--function satisfy \ref{MO}-\ref{MTh}. Then the functional $f$ is an ${\mathcal N}-$function, i.e., it satisfies:
	\begin{enumerate}[label=\arabic*)]
		\item \label{fFi} $f(\boldxi)=0$ if and only if $\boldxi=\bzero$,
		\item \label{fS} $f(\boldxi)=f(-\boldxi)$,
		\item  $f$ is convex,
		\item \label{fFo} $\lim_{|\boldxi|\rightarrow 0}\frac{f(\boldxi)}{|\boldxi|}=0$, $\lim_{|\boldxi|\rightarrow\infty}\frac{f(\boldxi)}{|\boldxi|}=\infty$.
	\end{enumerate}
	\end{Lemma}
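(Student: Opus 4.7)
My plan is to verify the four properties in roughly the order \emph{3, 2, 4, 1}, since the later ones either reuse or strengthen earlier ingredients.

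The starting observation is that $G$ is a linear subspace of $L^M(Y;\eR^{d\times N})$ (it is the image of a linear space under the gradient) and in particular $\bzero\in G$ and $-\boldW\in G$ whenever $\boldW\in G$. Combined with the fact that $M(y,\cdot)$ is convex and symmetric for a.a.\ $y\in Y$, convexity (property 3) follows instantly: for $\boldW_i\in G$ nearly optimal for $f(\boldxi_i)$ and $\lambda\in[0,1]$, the function $\lambda\boldW_1+(1-\lambda)\boldW_2$ lies in $G$, and the pointwise convexity inequality for $M(y,\cdot)$ integrates to $f(\lambda\boldxi_1+(1-\lambda)\boldxi_2)\le\lambda\int_Y M(y,\boldxi_1+\boldW_1)+(1-\lambda)\int_Y M(y,\boldxi_2+\boldW_2)$, after which one passes to the infimum. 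Symmetry (property 2) is the same kind of bookkeeping: substituting $\boldV:=-\boldW$ into the definition of $f(-\boldxi)$ and using $M(y,\boldeta)=M(y,-\boldeta)$ shows $f(-\boldxi)=f(\boldxi)$.

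The behaviour at $0$ and $\infty$ (property 4) I would obtain from a simple two-sided sandwich. The upper bound is trivial: taking $\boldW=\bzero\in G$ and invoking \ref{MTh} gives
\begin{equation*}
  f(\boldxi)\le\int_Y M(y,\boldxi)\dy\le m_2(|\boldxi|).
\end{equation*}
For the lower bound, note that any $\boldW=\nabla\boldw\in G$ with $\boldw\in W^1_{per}E^M(Y;\eR^N)$ has mean zero (because periodicity makes $\int_Y\nabla\boldw=0$), so $\int_Y(\boldxi+\boldW)\dy=\boldxi$. Combining \ref{MTh} with Jensen's inequality applied to the convex $\mathcal{N}$-function $m_1$ yields
\begin{equation*}
  \int_Y M(y,\boldxi+\boldW(y))\dy\ge\int_Y m_1(|\boldxi+\boldW(y)|)\dy\ge m_1\Bigl(\Bigl|\int_Y(\boldxi+\boldW)\dy\Bigr|\Bigr)=m_1(|\boldxi|).
\end{equation*}
Taking the infimum over $\boldW\in G$ gives $m_1(|\boldxi|)\le f(\boldxi)\le m_2(|\boldxi|)$, and the two limit conditions in property 4 follow from the fact that $m_1,m_2$ are themselves $\mathcal{N}$-functions.

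Finally, property 1 is a corollary of the sandwich: the implication $\boldxi=\bzero\Rightarrow f(\boldxi)=0$ comes from $f(\bzero)\le\int_Y M(y,\bzero)\dy=0$, and conversely $f(\boldxi)=0$ forces $m_1(|\boldxi|)=0$, hence $\boldxi=\bzero$ since $m_1$ vanishes only at $0$. I do not expect any real obstacle here, as everything reduces to convex-analytic manipulations together with Jensen's inequality applied to the zero-mean property of periodic gradients; the only detail worth checking carefully is that $G$ is closed under negation and additions of scaled elements (it is, being the gradient image of a linear space), and that $\bzero\in G$ (it is the gradient of the zero function in $W^1_{per}E^M$). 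The mild subtlety is that $f$ is defined by an infimum, but since both of the key inequalities above hold for \emph{every} competitor $\boldW\in G$, passing to the infimum is harmless.
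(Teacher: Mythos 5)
Your proposal is correct and follows essentially the same route the paper uses: the paper omits the proof of this lemma (citing \cite{BGKSG17}) but proves the dual analogue, Lemma~\ref{Lem:HProp}, by exactly your argument — the two-sided bound $m_1(|\boldxi|)\le f(\boldxi)\le m_2(|\boldxi|)$ obtained from \ref{MTh}, Jensen's inequality and the zero-mean property of the competitors, from which properties 1) and 4) follow, with symmetry and convexity coming from the evenness and convexity of $M(y,\cdot)$ together with the linearity of the admissible set. Your verification that elements of $G$ have zero mean (via periodicity and passage to the limit in the defining closure) is the only point needing care, and you handle it correctly.
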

%

Finally, we state the key property of $f$ provided that $M^*$ satisfies $\Delta_2$--condition. Note that this lemma will play the essential role in the homogenization process.
\begin{Lemma}\label{Lem:FEqExpr}
	Let the $\mathcal{N}$--function satisfy \ref{MO}-\ref{MTh}, $M^*$ satisfy $\Delta_2$--condition and $f$ be defined by \eqref{FDefinition}. Then $f$ can be alternatively expressed as
	\begin{equation}\label{FAlter}
		f(\boldxi)=\inf_{\boldW\in G^{\bot\bot}(Y)}\int_Y M(y,\boldxi+\boldW(y))\dy.
	\end{equation}
	\begin{proof}
First, according to \cite[Lemma 2.9]{BGKSG17}, we have the following expression for the conjugate function $f^*$
\begin{equation}\label{FSt}
		f^*(\boldxi)=\inf_{\substack{\boldW^*\in G^\bot,\\ \int_Y \boldW^*(y)\dy=\boldxi}}\int_Y M^*(y,\boldW^*(y))\dy.
	\end{equation}
Next, we compute $f^{**}:=(f^*)^*$, which is the second conjugate to $f$. Defining a functional $\mathcal{G}$ as
		\begin{equation*}
			\mathcal{G}(\boldW)=\int_Y M^*(y,\boldW(y))\dy
		\end{equation*}
		one justifies that $\mathcal{G}$ is closed, continuous at $\bzero\in G^\bot$ and the fact that
		\begin{equation*}
			\mathcal{G}^*(\boldW^*)=\int_Y M(y,\boldW^*(y))\dy
		\end{equation*}
		analogously to the justification of all these facts for the functional $\mathcal{F}$ in the proof of \cite[Lemma 2.9]{BGKSG17}, or see also the analog in the proof of Lemma~\ref{Lem:HStEqExpr}.
		Then we compute
		\begin{equation*}
			\begin{split}
			f^{**}(\boldxi)&=\sup_{\boldeta\in \eR^{d\times N}}\left\{\boldxi\cdot\boldeta-\inf_{\boldW\in G_0^\bot}\mathcal{G}(\boldeta+\boldW)\right\}\\
			&=\sup_{\boldeta\in\eR^{d\times N}}\left\{-\inf_{\boldW\in G^\bot_0}\left\{ \mathcal{G}(\boldeta+\boldW)-\int_{Y}\boldxi\cdot(\boldeta+\boldW(y))\dy\right\}\right\}\\
			&=-\inf_{\boldeta\in\eR^{d\times N}}\left\{\inf_{\boldW\in G^\bot_0}\left\{\mathcal{G}(\boldeta+\boldW)-\int_{Y}\boldxi\cdot(\boldeta+\boldW(y))\dy\right\}\right\}\\
			&=-\inf_{\boldV\in \eR^{d\times N}\oplus G^\bot_0}\left\{\mathcal{G}(\boldV)-\int_{Y}\boldxi\cdot\boldV(y)\dy\right\}\\&=-\inf_{\boldV\in G^\bot}\left\{\mathcal{G}(\boldV)-\int_{Y}\boldxi\cdot\boldV(y)\dy\right\}=\inf_{\boldU\in G^{\bot\bot}(Y)}\mathcal{G}^*(\boldxi+\boldU)\\
			&=\inf_{\boldU\in G^{\bot\bot}(Y)}\int_Y M(y,\boldxi+\boldU(y))\dy,
			\end{split}
		\end{equation*}
		where the last equality follows by Lemma~\ref{Lem:Duality}. Then we immediately conclude \eqref{FAlter} because $f=f^{**}$ as $f$ is convex and lower semicontinuous.
		
	\end{proof}
\end{Lemma}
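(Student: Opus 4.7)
The plan is to exploit biconjugate duality: because $f$ is convex and lower semicontinuous we have $f = f^{**}$, so it suffices to compute $f^{**}$ and identify it with the infimum over $G^{\bot\bot}$. The starting ingredient is the companion formula (already established as \cite[Lemma 2.9]{BGKSG17})
\begin{equation*}
	f^*(\boldeta) = \inf_{\substack{\boldW^* \in G^\bot\\ \int_Y \boldW^* = \boldeta}} \int_Y M^*(y, \boldW^*(y)) \dy,
\end{equation*}
which rewrites $f^*$ as a constrained minimum over the annihilator $G^\bot$.

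Next I would set up the Fenchel duality machinery on $G^\bot$. Define $\mathcal{G}(\boldW) := \int_Y M^*(y, \boldW(y))\dy$ on $L^{M^*}_{per}(Y;\eR^{d\times N})$ and restrict it to the closed subspace $G^\bot$. The standard Musielak--Orlicz conjugation identity yields $\mathcal{G}^*(\boldV) = \int_Y M(y, \boldV(y))\dy$. The hypothesis that $M^*$ satisfies $\Delta_2$ is crucial here: it forces $L^{M^*} = E^{M^*}$, so that norm topology and modular topology coincide, which in turn gives norm-continuity of $\mathcal{G}$ at the origin on $G^\bot$ and validates the hypotheses of the general duality statement (Lemma~\ref{Lem:Duality} in Appendix~\ref{Ape2}).

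With these tools in place, I would compute $f^{**}$ by unrolling the definition and absorbing the free vector $\boldeta \in \eR^{d\times N}$ into the integrable variable. Writing $f^{**}(\boldxi) = \sup_{\boldeta}\{\boldxi\cdot\boldeta - f^*(\boldeta)\}$ and substituting the formula for $f^*$, the joint supremum/infimum over $\boldeta$ and over $\boldW^* \in G^\bot$ with $\int_Y \boldW^* = \boldeta$ collapses, via the decomposition $G^\bot = \eR^{d\times N} \oplus (G^\bot \cap \{\int_Y \cdot = 0\})$, to a single infimum over $\boldV \in G^\bot$:
\begin{equation*}
	f^{**}(\boldxi) = -\inf_{\boldV \in G^\bot}\left\{ \mathcal{G}(\boldV) - \int_Y \boldxi\cdot\boldV(y)\dy \right\}.
\end{equation*}
Applying Lemma~\ref{Lem:Duality} to the subspace $G^\bot$ and its annihilator $G^{\bot\bot}$, the right-hand side equals $\inf_{\boldU \in G^{\bot\bot}} \mathcal{G}^*(\boldxi + \boldU) = \inf_{\boldU \in G^{\bot\bot}} \int_Y M(y, \boldxi + \boldU(y))\dy$, which is exactly \eqref{FAlter}.

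The last step is to argue $f = f^{**}$. Convexity of $f$ is immediate from the joint convexity of the integrand and the linearity of $G$ as a subspace; lower semicontinuity on the finite-dimensional space $\eR^{d\times N}$ follows by a standard Fatou/diagonal argument using Lemma~\ref{Lem:FProp}. The main obstacle I anticipate is the verification of the closedness and continuity hypotheses needed to apply Lemma~\ref{Lem:Duality} on $G^\bot$, since these are exactly the points where the $\Delta_2$-condition on $M^*$ is indispensable; once this is confirmed, the remaining manipulations reduce to rearranging suprema and infima as sketched.
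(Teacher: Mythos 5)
Your proposal is correct and follows essentially the same route as the paper: it starts from the formula for $f^*$ from \cite[Lemma 2.9]{BGKSG17}, introduces the functional $\mathcal{G}(\boldW)=\int_Y M^*(y,\boldW(y))\dy$ with conjugate $\mathcal{G}^*(\boldV)=\int_Y M(y,\boldV(y))\dy$, collapses the sup--inf via the decomposition of $G^\bot$ into constants plus its zero-mean part, applies Lemma~\ref{Lem:Duality}, and concludes with $f=f^{**}$. The only stylistic difference is that you make explicit the role of the $\Delta_2$--condition and the splitting $G^\bot=\eR^{d\times N}\oplus G^\bot_0$, which the paper leaves implicit.
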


Next, in order to be able to prove the main theorem in the case when $M$ satisfies $\Delta_2$--condition we introduce a functional $h^*:\eR^{d\times N}\rightarrow[0,\infty)$ as
\begin{equation}\label{hSt:Def}
	h^*(\boldxi)=\inf_{\boldW\in D_0}\int_Y M^*(y,\boldxi+\boldW(y))\dy,
\end{equation}
where
\begin{equation}\label{D0}
D_0:=\{\boldW \in D: \; \int_Y \boldW(y)\dy=0\}.
\end{equation}
The properties of $h^*$ are summarized in the ensuing lemma. Since the proof of each property is analogous to the proof of corresponding property of $f$ in Lemma~\ref{Lem:FProp}, the proof of the lemma is omitted.
\begin{Lemma}\label{Lem:HProp}
	Let the $\mathcal{N}$--function $M$ satisfy \ref{MO}-\ref{MTh} and $h^*$ be defined by \eqref{hSt:Def}. Then the functional $h^*$ is an ${\mathcal N}-$function, i.e., it satisfies:
	\begin{enumerate}[label=\arabic*)]
		\item \label{hFi} $h^*(\boldxi)=0$ if and only if $\boldxi=\bzero$,
		\item \label{hS} $h^*(\boldxi)=h^*(-\boldxi)$,
		\item  $h^*$ is convex,
		\item \label{hFo} $\lim_{|\boldxi|\rightarrow 0}\frac{h^*(\boldxi)}{|\boldxi|}=0$, $\lim_{|\boldxi|\rightarrow\infty}\frac{h^*(\boldxi)}{|\boldxi|}=\infty$.
	\end{enumerate}
\begin{proof}
		First, we show that
		\begin{equation}\label{AbBeEstF}
			m_2^*(|\boldxi|)\leq h^*(\boldxi)\leq m_1^*(|\boldxi|).
		\end{equation}
To do that, we first observe that it follows from \ref{MTh} and the fact that all involved functions are $\mathcal{N}$--functions that for all $y\in Y$ and all $\boldxi\in \eR^{d\times N}$ we have
\begin{equation}\label{AbBeEstFHLP}
			m_2^*(|\boldxi|)\leq M^*(y,\boldxi)\leq m_1^*(|\boldxi|).
		\end{equation}
Let us show the first inequality in \eqref{AbBeEstF}. Using \eqref{AbBeEstFHLP}, Jensen's inequality (applied to the convex function $m_2^*$) and the definition of $D_0$, we have
		\begin{equation*}
		\begin{split}
			h^*(\boldxi)&=\inf_{\boldW\in D_0}\int_Y M^*(y,\boldxi+\boldW(y))\dy\geq \inf_{\boldW\in D_0}\int_Y m_2^*(|\boldxi+\boldW(y)|)\dy\geq \inf_{\boldW\in D_0}m_2^*\left(\left|\boldxi+\int_Y\boldW(y)\dy\right|\right)\\
			&= m_2^*(|\boldxi|).
			\end{split}
		\end{equation*}
On the other hand, since $\bzero \in D_0$ we also have
\begin{equation*}
\begin{split}
			h^*(\boldxi)&=\inf_{\boldW\in D_0}\int_Y M^*(y,\boldxi+\boldW(y))\dy\le \int_Y M^*(y,\boldxi)\dy \le \int_Y m_1^*(|\boldxi|)\dy= m_1^*(|\boldxi|).
\end{split}
\end{equation*}
The assertions \ref{fFi} and \ref{fFo} then follow immediately from \eqref{AbBeEstF} and the facts that $m_1$ and $m_2$ are $\mathcal{N}$--functions.
The property \ref{fS} directly follows from the fact that  $M$ is even in the second argument and  $D_0$ is a subspace of $E^{M^*}_{per}(Y;\eR^{d\times N})$.
In order to show the convexity of $h^*$ we take an arbitrary $\lambda\in (0,1)$, $\boldxi_1,\boldxi_2\in\eR^{d\times N}$ and $\boldW_1,\boldW_2\in D_0$. Again, since $D_0$ is a subspace of $E^{M^*}_{per}(Y;\eR^{d\times N})$, we can use the definition of $h^*$  and the convexity of $M^*$ to obtain
\begin{equation*}
\begin{split}
h^*(\lambda\boldxi_1+(1-\lambda)\boldxi_2)&\leq \int_Y M^*(y,\lambda(\boldxi_1+\boldW_1(y))+(1-\lambda)(\boldxi_2+\boldW_2(y)))\dy\\
&\leq \lambda\int_Y  M^*(y,\boldxi_1+ \boldW_1(y))\dy+(1-\lambda)\int_Y M^*(y,\boldxi_2+\boldW_2(y))\dy.
\end{split}
\end{equation*}
		One obtains the desired conclusion by taking the infimum over $\boldW_1$ and $\boldW_2$ on the right hand side of the latter inequality.
\end{proof}	
\end{Lemma}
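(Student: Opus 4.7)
The plan is to mirror the argument used for $f$ in Lemma~\ref{Lem:FProp}, exploiting the fact that $D_0$ is a linear subspace of $E^{M^*}_{per}(Y;\eR^{d\times N})$ whose elements have vanishing mean. The whole argument is driven by two-sided bounds on $h^*$ in terms of the pointwise envelopes $m_1^*$ and $m_2^*$. As a preliminary observation, I note that assumption \ref{MTh} dualizes to
\begin{equation*}
m_2^*(|\boldxi|)\le M^*(y,\boldxi)\le m_1^*(|\boldxi|) \qquad \text{for a.a. } y\in Y \text{ and all }\boldxi\in \eR^{d\times N},
\end{equation*}
and I would quickly record this by the standard Fenchel duality reversal of the inequality. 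Everything that follows reduces to applying this inequality together with the definition of $h^*$.

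For the lower bound, since any $\boldW\in D_0$ has mean zero on $Y$, Jensen's inequality applied to the convex function $m_2^*$ gives
\begin{equation*}
\int_Y M^*(y,\boldxi+\boldW(y))\dy\ \ge\ \int_Y m_2^*(|\boldxi+\boldW(y)|)\dy\ \ge\ m_2^*\bigl(|\boldxi|\bigr),
\end{equation*}
and taking the infimum over $\boldW\in D_0$ yields $h^*(\boldxi)\ge m_2^*(|\boldxi|)$. For the upper bound, I simply test with $\boldW\equiv\bzero\in D_0$ to obtain $h^*(\boldxi)\le \int_Y M^*(y,\boldxi)\dy\le m_1^*(|\boldxi|)$. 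From the sandwich $m_2^*(|\boldxi|)\le h^*(\boldxi)\le m_1^*(|\boldxi|)$, the nondegeneracy \ref{hFi} and the limit behavior \ref{hFo} fall out immediately from the fact that $m_1^*$ and $m_2^*$ are themselves $\mathcal{N}$--functions (so both vanish only at $0$ and both have the correct behavior at $0$ and at $\infty$).

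For symmetry \ref{hS}, I would use that $M^*(y,\cdot)$ is even (as $M$ is, hence its conjugate is too) and that $D_0$ is closed under $\boldW\mapsto -\boldW$; the change of variable $\boldW\leftrightarrow -\boldW$ in the infimum then identifies $h^*(\boldxi)$ and $h^*(-\boldxi)$. For convexity, given $\lambda\in(0,1)$, $\boldxi_1,\boldxi_2\in\eR^{d\times N}$ and arbitrary $\boldW_1,\boldW_2\in D_0$, the combination $\lambda\boldW_1+(1-\lambda)\boldW_2$ lies in $D_0$ (linearity of $D_0$), so the convexity of $M^*(y,\cdot)$ and monotonicity of the integral give
\begin{equation*}
h^*(\lambda\boldxi_1+(1-\lambda)\boldxi_2)\ \le\ \lambda\int_Y M^*(y,\boldxi_1+\boldW_1)\dy\,+\,(1-\lambda)\int_Y M^*(y,\boldxi_2+\boldW_2)\dy,
\end{equation*}
and taking the infimum over $\boldW_1,\boldW_2$ independently produces $h^*(\lambda\boldxi_1+(1-\lambda)\boldxi_2)\le \lambda h^*(\boldxi_1)+(1-\lambda)h^*(\boldxi_2)$.

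The only subtlety worth flagging is that the lower-bound step secretly uses \emph{both} the convexity of $m_2^*$ \emph{and} the mean-zero constraint built into $D_0$; without that constraint Jensen's inequality would not produce a bound independent of $\boldW$, and the statement would fail. This is why $D_0$ is defined with the zero-mean restriction rather than as $D$ itself. Everything else is a routine adaptation of the proof scheme used for $f$, so no new functional-analytic tool beyond \ref{MTh}, the symmetry and convexity of $M^*$, and the linearity of $D_0$ is required.
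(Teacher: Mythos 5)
Your proof is correct and follows essentially the same route as the paper: the two-sided bound $m_2^*(|\boldxi|)\le h^*(\boldxi)\le m_1^*(|\boldxi|)$ via the dualized form of \ref{MTh}, Jensen's inequality exploiting the zero-mean constraint in $D_0$, testing with $\boldW=\bzero$, and then symmetry and convexity from the evenness and convexity of $M^*(y,\cdot)$ together with the linearity of $D_0$. Your closing remark about why the zero-mean restriction in $D_0$ is essential is accurate and consistent with the paper's construction.
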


Next, we show the counter part of Lemma~\ref{Lem:FEqExpr} stated now for $h^*$ and the case when $M$ satisfies $\Delta_2$--condition.
\begin{Lemma}\label{Lem:HStEqExpr}
	Let the $\mathcal{N}$--function $M$ satisfy \ref{MO}-\ref{MTh} and $\Delta_2$--condition and $h^*$ be defined by \eqref{hSt:Def}. Then
\begin{equation}\label{hdoub}
			h^{**}(\boldxi):=(h^{*})^*(\boldxi)=\inf_{\boldV\in D^\bot}\int_Y M(y,\boldxi+\boldV(y))\dy
		\end{equation}
and in addition $h^*$ can be equivalently expressed as
	\begin{equation}\label{HAlter}
		h^*(\boldxi)=\inf_{\boldW\in D^{\bot\bot}_0}\int_Y M^*(y,\boldxi+\boldW(y))\dy,
	\end{equation}
where $D^{\bot \bot}_0:=\{\boldW\in D^{\bot \bot}: \;\int_Y\boldW(y)\dy =0\}.$
\begin{proof}
		 First, we show \eqref{hdoub}. Using the definition of $h^*$, see \eqref{hSt:Def}, and defining  a functional $\mathcal{F}:L^{M^*}(Y;\eR^{d\times N})\rightarrow\eR$ as
		\begin{equation*}
			\mathcal{F}(\boldW):=\int_Y M^*(y,\boldW(y))\dy
		\end{equation*}
we obtain (due to the fact that $\boldW\in D_0$ has zero mean value)
\begin{equation}\label{fStCom}
		\begin{split}
			h^{**}(\boldxi)&=\sup_{\boldeta\in\eR^{d\times N}}\left\{\boldxi\cdot\boldeta-\inf_{\boldW\in D_0}\mathcal{F}(\boldeta+\boldW)\right\}\\
&=\sup_{\boldeta\in\eR^{d\times N}}\left\{-\inf_{\boldW\in D_0}\left\{ \mathcal{F}(\boldeta+\boldW)-\int_{Y}\boldxi\cdot(\boldeta+\boldW(y))\dy\right\}\right\}\\
&=-\inf_{\boldeta\in\eR^{d\times N}}\left\{\inf_{\boldW\in D_0}\left\{\mathcal{F}(\boldeta+\boldW)-\int_{Y}\boldxi\cdot(\boldeta+\boldW(y))\dy\right\}\right\}\\
&=-\inf_{\boldV\in \eR^{d\times N}\oplus D_0}\left\{\mathcal{F}(\boldV)-\int_{Y}\boldxi\cdot\boldV(y)\dy\right\}.
			\end{split}
		\end{equation}
Next, we apply Lemma~\ref{Lem:Duality} onto a functional $\mathcal{F}$. First, we observe that $\mathcal{F}$ is closed, i.e., equivalently if $\boldW^k\rightarrow\boldW$ in $L^{M^*}(Y;\eR^{d\times N})$ then
		\begin{equation}\label{FClo}
			\liminf_{k\rightarrow\infty}\mathcal{F}(\boldW^k)\geq\mathcal{F}(\boldW).
		\end{equation}
		Obviously $\boldW^k\rightarrow\boldW$ in $L^{M^*}_{per}(Y;\eR^{d\times N})$ implies $\boldW^k\rightarrow\boldW$ in $L^1_{per}(Y;\eR^{d\times N})$. In order to show \eqref{FClo} it suffices to apply the lower semicontinuity of integral functionals with a Carath\'eodory integrand, see \cite[Theorem 4.2]{G03}. Moreover, $\mathcal{F}$ is continuous at $\bzero\in D$, which is a consequence of \eqref{SSS}. The conjugate functional $\mathcal{F}^*$ to $\mathcal{F}$ is given by
\begin{equation*}
			\mathcal{F}^*(\boldW^*)=\int_Y M(y,\boldW^*(y))\dy
\end{equation*}
according to \eqref{SST}. Therefore by Lemma \ref{Lem:Duality} we get from \eqref{fStCom}
\begin{equation*}
			h^{**}(\boldxi)=\inf_{\boldW^*\in (\eR^{d\times N}\oplus D_0)^\bot}\int_{Y}M(y,\boldW^*(y)+\boldxi)\dy.
\end{equation*}
Since,
		\begin{equation*}
			\left(\eR^{d\times N}\oplus D_0\right)^\bot=D^{\bot}
		\end{equation*}
we conclude \eqref{hdoub}. 		

Then we proceed with the computation of $h^{***}:=(h^{**})^*$. Since $M$ satisfies $\Delta_2$--condition, we have
		\begin{equation*}
			D^\bot=\{\boldV\in E^M_{per,\div}(Y;\eR^{d\times N}):\int_Y\boldV(y)\cdot\boldW(y)\dy=0\text{ for all }\boldW\in D\}
		\end{equation*}
		and $(\eR^{d\times N}\oplus D^\bot)^\bot=D^{\bot\bot}_0$. Accordingly, we obtain by Lemma~\ref{Lem:Duality}
		\begin{equation*}
			h^{***}(\boldxi)=-\inf_{\boldV\in\eR^{d\times N}\oplus D^{\bot}}\int_Y M(y,\boldV(y))-\boldxi\cdot\boldV(y)\dy=\inf_{\boldW\in D^{\bot\bot}_0}\int_Y M^*(y,\boldxi+\boldW(y))\dy.
		\end{equation*}
		As $h^*$ is convex and continuous, the latter identity implies \eqref{HAlter}.
	\end{proof}
\end{Lemma}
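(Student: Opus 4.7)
The plan is to obtain both identities by two successive applications of Fenchel--Young duality. For the first identity I will compute $h^{**}$ directly from the defining infimum formula for $h^*$, using an abstract duality statement (presumably Lemma~\ref{Lem:Duality} in the appendix) that lets one swap an infimum over a subspace against an infimum over its annihilator in a dual pair of Orlicz spaces. For the second identity I will apply the same duality again to $h^{**}$ to obtain $h^{***}$, and then invoke convexity and lower semicontinuity of $h^*$ on the finite-dimensional space $\eR^{d\times N}$ to conclude $h^{***}=h^*$. The assumption \ref{MDeltaTwo} that $M$ satisfies $\Delta_2$ is exactly what makes the second step legitimate, because it identifies $(L^{M^*})^*$ with $L^M$ and vice versa, so that the annihilators appearing in Lemma~\ref{Lem:Duality} sit in the right spaces.

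For the first step I would introduce $\mathcal{F}(\boldW):=\int_Y M^*(y,\boldW(y))\dy$ on $L^{M^*}_{per}(Y;\eR^{d\times N})$, so that $h^*(\boldxi)=\inf_{\boldW\in D_0}\mathcal{F}(\boldxi+\boldW)$. Using that elements of $D_0$ have zero mean I would rewrite, for each fixed $\boldeta\in\eR^{d\times N}$,
\begin{equation*}
\boldxi\cdot\boldeta-h^*(\boldeta)
=-\inf_{\boldW\in D_0}\Bigl\{\mathcal{F}(\boldeta+\boldW)-\int_Y\boldxi\cdot(\boldeta+\boldW(y))\dy\Bigr\}.
\end{equation*}
Taking the supremum in $\boldeta$ then amalgamates the free direction $\boldeta\in\eR^{d\times N}$ with the zero-mean directions $D_0$ into a single infimum over the (direct) sum $\eR^{d\times N}\oplus D_0$. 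Applying Lemma~\ref{Lem:Duality} --- whose hypotheses I would verify by checking that $\mathcal{F}$ is lower semicontinuous on $L^{M^*}$ (standard for integral functionals with a Carath\'eodory integrand) and that $\mathcal{F}$ is continuous at $\bzero$ as a point of the closed subspace $D\subset E^{M^*}_{per}$ --- converts this into an infimum over the annihilator $(\eR^{d\times N}\oplus D_0)^\bot$ of the conjugate integrand. Since the conjugate of $\mathcal{F}$ is $\boldW^*\mapsto\int_Y M(y,\boldW^*(y))\dy$ and an elementary identification gives $(\eR^{d\times N}\oplus D_0)^\bot=D^\bot$ (elements must be orthogonal both to constants, forcing $D^\bot$-membership, and to all zero-mean solenoidal test fields, which is automatic in $D^\bot$), this produces exactly formula \eqref{hdoub}.

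For the second step I would repeat the same Legendre transform with the roles of $M$ and $M^*$ reversed. Here the $\Delta_2$-assumption on $M$ gives $(L^M_{per})^*=L^{M^*}_{per}$, so that $D^\bot\subset L^M_{per}$ is paired dually with $L^{M^*}_{per}$ and the double annihilator $D^{\bot\bot}$ lives in the correct space. Setting up $\mathcal{G}(\boldV):=\int_Y M(y,\boldV(y))\dy$ on $L^M_{per}$ and running the same computation as above gives $h^{***}(\boldxi)=\inf_{\boldW\in (\eR^{d\times N}\oplus D^\bot)^\bot}\int_Y M^*(y,\boldxi+\boldW(y))\dy$, and the algebraic identity $(\eR^{d\times N}\oplus D^\bot)^\bot=D^{\bot\bot}_0$ yields the right-hand side of \eqref{HAlter}. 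Finally, since Lemma~\ref{Lem:HProp} identifies $h^*$ as a proper convex $\mathcal{N}$-function on $\eR^{d\times N}$, it is automatically continuous and hence coincides with its biconjugate $h^{***}$ by the classical Fenchel--Moreau theorem.

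I expect the main technical obstacle to be verifying the hypotheses of the duality lemma in the nonreflexive setting: in particular the continuity of $\mathcal{F}$ at $\bzero\in D$ (which requires a uniform integrability argument typical of $E^{M^*}$, tied to the fact that simple functions are norm-dense in $E^{M^*}$), and the careful bookkeeping of the two annihilator identities $(\eR^{d\times N}\oplus D_0)^\bot=D^\bot$ and $(\eR^{d\times N}\oplus D^\bot)^\bot=D^{\bot\bot}_0$. Everything else is a formal Fenchel calculation; the nontrivial input is the $\Delta_2$-condition on $M$, which is used only at the second step to guarantee that $D^{\bot\bot}$ is well-defined as a subset of $L^{M^*}$ via the duality $(L^M)^*=L^{M^*}$.
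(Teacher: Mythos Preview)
Your proposal is correct and follows essentially the same route as the paper: two applications of the abstract duality Lemma~\ref{Lem:Duality}, with the same annihilator identifications $(\eR^{d\times N}\oplus D_0)^\bot=D^\bot$ and $(\eR^{d\times N}\oplus D^\bot)^\bot=D^{\bot\bot}_0$, and the final step $h^{***}=h^*$ via convexity and continuity of $h^*$. The only minor difference is that the continuity of $\mathcal{F}$ at $\bzero$ is simpler than you anticipate --- it follows directly from the modular--norm inequality \eqref{SSS} rather than requiring a separate uniform integrability argument.
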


The ${\mathcal N}-$functions $f$ and $f^*$, $h^*$ and $h^{**}$ respectively, were introduced in order to indicate the growth and coercivity properties of the operator $\hat\boldA$ as it is stated among other properties of $\hat\boldA$ in the following lemma.
\begin{Lemma}
Let the operator $\boldA$ satisfy \ref{AO}--\ref{ATh} and the $\mathcal{N}$--function $M$ satisfy \ref{MO}--\ref{MTh}. Then:
\begin{enumerate}[label=(\^{A}\arabic*)]
	\item \label{AhO} There is a constant $c>0$ such that for all $\boldxi\in\eR^{d\times N}$
		\begin{equation}\label{AhEst}
			\begin{split}
				\hat\boldA(\boldxi)\cdot\boldxi&\geq c(f(\boldxi)+f^*(\hat\boldA(\boldxi)))\text{ provided that }M^* \text{ satisfies }\Delta_2\text{--condition},\\
				\hat\boldA(\boldxi)\cdot\boldxi&\geq c(h^{**}(\boldxi)+h^{*}(\hat\boldA(\boldxi)))\text{ provided that }M \text{ satisfies }\Delta_2\text{--condition}
			\end{split}
		\end{equation}
		
	\item \label{AhT} For all $\boldxi,\boldeta\in\eR^{d\times N}$, $\boldxi\neq\boldeta$
		\begin{equation*}
			(\hat\boldA(\boldxi)-\hat\boldA(\boldeta))\cdot(\boldxi-\boldeta)> 0.
		\end{equation*}
	\item\label{AhTh} $\hat\boldA$ is continuous on $\eR^{d\times N}$.
\end{enumerate}
\begin{proof}
Let $\boldw_{\boldxi}$ be a solution of the cell problem corresponding to $\boldxi\in\eR^{d\times N}$, see \eqref{CellPr}, whose existence and uniqueness is granted by Lemma~\ref{L2.3}.  In addition, if $M^{*}$ satisfies $\Delta_2$--condition, we know that $\boldw_{\boldxi}\in W^{1}_{per}L^M(Y)$ and if $M$ satisfies $\Delta_2$--condition then $\boldw_{\boldxi}\in V_{per}^M$. Furthermore, in both cases we know that
\begin{equation}\label{dret}
\int_Y \boldA(y,\boldxi+\nabla \boldw_{\boldxi}) \cdot \nabla \boldw_{\boldxi}\dy =0.
\end{equation}
Then it directly follows that
\begin{equation}\label{BelEst}
	\begin{split}
	\hat{\boldA}(\boldxi)\cdot\boldxi&=\int_Y\boldA(y,\boldxi+\nabla \boldw_{\boldxi}(y))\dy\cdot\boldxi=\int_Y\boldA(y,\boldxi+\nabla \boldw_{\boldxi}(y))\dy\cdot(\boldxi+\nabla \boldw_{\boldxi}(y))\dy\\
	&\geq c\int_Y M(y,\boldxi+\nabla \boldw_{\boldxi}(y))+M^*(y,\boldA(y,\boldxi+\nabla \boldw_{\boldxi}(y))\dy
	\end{split}
\end{equation}
and the estimate \eqref{AhEst} will be derived from the above inequality.

First, we deal with the case $M^*$ satisfies $\Delta_2$--condition. Let us show $\nabla\boldw_{\boldxi}\in G^{\bot\bot}$. Choosing an arbitrary $\boldV\in G^\bot\subset E^{M^*}(Y;\eR^{d\times N})$ and taking into account that $\boldw_{\boldxi}\in W^{1}_{per}L^M(Y;\eR^N)$ is a weak$^*$ limit of a sequence $\{\boldw_{\boldxi}^k\}_{k=1}^\infty\subset W^{1}_{per}E^M(Y;\eR^{N})$ we obtain
\begin{equation*}
	\int_Y \nabla\boldw_{\boldxi}(y)\cdot \boldV(y)\dy=\lim_{k\rightarrow \infty}\int_Y \nabla\boldw_{\boldxi}^k(y)\cdot\boldV(y)\dy=0.
\end{equation*}
As $\nabla\boldw_{\boldxi}\in G^{\bot\bot}$, we can use  Lemma~\ref{Lem:FEqExpr} to infer
\begin{equation}\label{EstF}
	\int_Y M(y,\boldxi+\nabla\boldw_{\boldxi}(y))\dy\geq f(\boldxi).
\end{equation}
As $\boldw_{\boldxi}$ is a solution of the cell problem and $G\subset \{\nabla\boldv:\boldv\in W^{1}_{per}L^M(Y;\eR^{N})\}$, it follows from weak formulation~\eqref{CellPrWFMStDTw} that $\boldA(\cdot,\boldxi+\nabla \boldw)\in G^\bot$. Thus we deduce
\begin{equation}\label{EstFSt}
	\int_Y M^*(y,\boldA(y,\boldxi+\nabla\boldw(y))\dy\geq f^*(\boldxi).
\end{equation}
Finally, estimate \eqref{AhEst}$_1$ is obtained as a consequence of \eqref{BelEst}, \eqref{EstF} and \eqref{EstFSt}.

Next, we assume that $M$ satisfies $\Delta_2$--condition. Since $D$ is defined as a closure of smooth periodic divergence-free function, then it directly follows that $\nabla\boldw_{\boldxi}\in D^\bot$, which implies
\begin{equation}\label{EstHDSt}
	\int_Y M(y,\boldxi+\nabla\boldw_{\boldxi}(y))\dy\geq h^{**}(\boldxi).
\end{equation}
Next, since  $\boldA(\cdot,\boldxi+\nabla\boldw_{\boldxi})\in L^{M^*}_{per}(Y;\eR^{d\times N})$ and  \eqref{DBotChar} is available (due to the fact that $M$ satisfies $\Delta_2$--condition), we conclude using weak formulation \eqref{CellPrWFMDTw} that $\boldA(\cdot,\boldxi+\nabla\boldw_{\boldxi})\in D^{\bot\bot}$. Consequently, by Lemma~\ref{Lem:HStEqExpr} we get
\begin{equation}\label{EstHSt}
	\int_Y M^*(y,\boldA(y,\boldxi+\nabla\boldw_{\boldxi}(y)))\dy\geq h^*(\hat\boldA(\boldxi)).
\end{equation}
Estimate \eqref{AhEst}$_2$ then follows from \eqref{BelEst}, \eqref{EstHSt} and \eqref{EstHDSt}.

In order to show \ref{AhT} we fix $\boldxi_1,\boldxi_2\in\eR^{d\times N}, \boldxi_1\neq\boldxi_2$ and find corresponding weak solutions of the cell problem $\boldw_1$ and $\boldw_2$. One obtains
\begin{equation*}
\int_Y \boldA(y,\boldxi_i+\nabla \boldw_i(y))\cdot\nabla \boldw_j(y)\dy=0\text{ for } i,j=1,2
\end{equation*}
in the same way as \eqref{TestCPrSol} was shown. Then using \ref{AF}, we deduce
\begin{equation*}
	\begin{split}
	(\hat\boldA(\boldxi_1)-\hat\boldA(\boldxi_2))\cdot(\boldxi_1-\boldxi_2)&=\int_Y(\boldA(y,\boldxi_1+\nabla \boldw_1)-\boldA(y,\boldxi_2+\nabla \boldw_2))\cdot(\boldxi_1-\boldxi_2)\dy\\
	&=\int_Y(\boldA(y,\boldxi_1+\nabla \boldw_1)-\boldA(y,\boldxi_2+\nabla \boldw_2))\cdot(\boldxi_1+\nabla \boldw_1-\boldxi_2-\nabla \boldw_2)\dy>0
	\end{split}
\end{equation*}
and \ref{AhT} follows.

To show \ref{AhTh} we consider $\{\boldxi^k\}_{k=1}^\infty$ such that $\boldxi^k\rightarrow\boldxi$ in $\eR^{d\times N}$ as $k\rightarrow\infty$, a corresponding sequence of weak solutions of the cell problems $\{\boldw^k\}_{k=1}^\infty$ and $\boldw$ corresponding to $\boldxi$. Then we have for an arbitrary but fixed $\boldeta\in \eR^{d\times N}$ that
\begin{equation*}
	(\hat\boldA(\boldxi^k)-\hat\boldA(\boldxi))\cdot\boldeta=\int_Y (\boldA(y,\boldxi^k+\nabla \boldw^k)-\boldA(y,\boldxi+\nabla \boldw))\cdot\boldeta\dy\rightarrow 0
\end{equation*}
as $k\rightarrow\infty$ by \eqref{WSCont}. Since $\eR^{d\times N}$ is finite dimensional, we conclude \ref{AhTh} from the latter convergence.
\end{proof}
\end{Lemma}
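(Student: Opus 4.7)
The plan is to prove the three properties by systematically exploiting the cell problem \eqref{CellPr} together with the identifications of $\nabla\boldw_{\boldxi}$ and $\boldA(\cdot,\boldxi+\nabla\boldw_{\boldxi})$ in the appropriate annihilator spaces, and then invoking the representations of $f,f^*,h^*,h^{**}$ established in Lemma~\ref{Lem:FEqExpr} and Lemma~\ref{Lem:HStEqExpr}.

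For \ref{AhO}, I first test the cell problem with $\boldw_{\boldxi}$ itself (which is admissible in either \eqref{CellPrWFMStDTw} or \eqref{CellPrWFMDTw}) to obtain $\int_Y \boldA(y,\boldxi+\nabla\boldw_{\boldxi})\cdot\nabla\boldw_{\boldxi}\dy=0$. Consequently,
\begin{equation*}
\hat\boldA(\boldxi)\cdot\boldxi=\int_Y\boldA(y,\boldxi+\nabla\boldw_{\boldxi}(y))\cdot(\boldxi+\nabla\boldw_{\boldxi}(y))\dy\ge c\int_Y M(y,\boldxi+\nabla\boldw_{\boldxi})+M^*(y,\boldA(y,\boldxi+\nabla\boldw_{\boldxi}))\dy
\end{equation*}
by \ref{ATh}. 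In the case that $M^*$ satisfies $\Delta_2$, the cell solution lives in $W^1_{per}L^M(Y;\eR^N)$, which via a weak$^*$ density argument gives $\nabla\boldw_{\boldxi}\in G^{\bot\bot}$, while the weak formulation \eqref{CellPrWFMStDTw} places $\boldA(\cdot,\boldxi+\nabla\boldw_{\boldxi})\in G^{\bot}$ (and automatically in $\eR^{d\times N}\oplus G_0^\bot$ with mean value $\hat\boldA(\boldxi)$). Lemma~\ref{Lem:FEqExpr} together with \eqref{FSt} then bounds the two integrals from below by $f(\boldxi)$ and $f^*(\hat\boldA(\boldxi))$ respectively, giving \eqref{AhEst}$_1$. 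In the case that $M$ satisfies $\Delta_2$, the cell solution lies in $V^M_{per}$, so $\nabla\boldw_{\boldxi}\in D^\bot$ by \eqref{DBotChar}, while \eqref{CellPrWFMDTw} together with \eqref{DBotChar} gives $\boldA(\cdot,\boldxi+\nabla\boldw_{\boldxi})\in D^{\bot\bot}$; applying \eqref{hdoub} and \eqref{HAlter} yields \eqref{AhEst}$_2$.

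For \ref{AhT}, given $\boldxi_1\neq\boldxi_2$ with corresponding cell solutions $\boldw_1,\boldw_2$, cross-testing works: $\boldw_2$ is an admissible test function for the cell problem at $\boldxi_1$ and vice versa, so $\int_Y \boldA(y,\boldxi_i+\nabla\boldw_i)\cdot\nabla\boldw_j\dy=0$ for all $i,j\in\{1,2\}$. Adding these zero terms lets us rewrite
\begin{equation*}
(\hat\boldA(\boldxi_1)-\hat\boldA(\boldxi_2))\cdot(\boldxi_1-\boldxi_2)=\int_Y(\boldA(y,\boldxi_1+\nabla\boldw_1)-\boldA(y,\boldxi_2+\nabla\boldw_2))\cdot((\boldxi_1+\nabla\boldw_1)-(\boldxi_2+\nabla\boldw_2))\dy.
\end{equation*}
By \ref{AF} the integrand is nonnegative a.e., and since $\int_Y \nabla\boldw_i\dy=0$ by $Y$-periodicity, the equality $\boldxi_1+\nabla\boldw_1=\boldxi_2+\nabla\boldw_2$ a.e.\ would force $\boldxi_1=\boldxi_2$, a contradiction; hence the integrand is strictly positive on a set of positive measure and the integral is strictly positive.

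For \ref{AhTh}, sequential continuity suffices by finite-dimensionality. If $\boldxi^k\to\boldxi$ in $\eR^{d\times N}$, Lemma~\ref{L2.3} gives $\boldA(\cdot,\boldxi^k+\nabla\boldw^k)\WSCon\boldA(\cdot,\boldxi+\nabla\boldw)$ in $L^{M^*}(Y;\eR^{d\times N})$. Since constant functions belong to $E^M(Y;\eR^{d\times N})$ (in view of \ref{MTh}), testing the weak$^*$ convergence against each constant direction $\boldeta\in\eR^{d\times N}$ yields $\hat\boldA(\boldxi^k)\cdot\boldeta\to\hat\boldA(\boldxi)\cdot\boldeta$, i.e.\ $\hat\boldA(\boldxi^k)\to\hat\boldA(\boldxi)$. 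The main obstacle is really the first item: correctly identifying $\nabla\boldw_{\boldxi}$ and $\boldA(\cdot,\boldxi+\nabla\boldw_{\boldxi})$ as elements of the right annihilators (which differs sharply between the two $\Delta_2$ cases) so that the duality formulas \eqref{FAlter} and \eqref{HAlter} actually apply; the rest then reduces to assembling ingredients already in place.
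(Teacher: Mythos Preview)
Your proof is correct and follows essentially the same route as the paper: test the cell problem with $\boldw_{\boldxi}$ to obtain the key lower bound via \ref{ATh}, then identify $\nabla\boldw_{\boldxi}$ and $\boldA(\cdot,\boldxi+\nabla\boldw_{\boldxi})$ in the appropriate annihilators to invoke Lemmas~\ref{Lem:FEqExpr} and~\ref{Lem:HStEqExpr}; the cross-testing argument for \ref{AhT} and the use of \eqref{WSCont} against constants for \ref{AhTh} are likewise identical to the paper's. If anything, your justification of strict positivity in \ref{AhT} (via the zero-mean of $\nabla\boldw_i$) and your remark that constants lie in $E^M(Y;\eR^{d\times N})$ thanks to \ref{MTh} are slightly more explicit than the paper's.
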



\section{Proof of Theorem~\ref{Thm:MainOne}} \label{SS3}
This section is devoted to the proof of Theorem~\ref{Thm:MainTwo}. Since the theorem is formulated in a slightly vague way without the precise definition of the function spaces and the notion of the weak solution, we formulate here two Lemmata, which cover the statement of Theorem~\ref{Thm:MainOne}.

First, we consider the case of $M^*$ satisfying $\Delta_2$--condition. Since the existence of a solution for this case can be proven following the approach from \cite{GSG208}, which in fact deals with the existence result for a more complex system governing the flow of non-Newtonian fluids, we do not prove it here but for the reader convenience present the proof in Appendix~\ref{Ape3}.
\begin{Lemma}\label{Lem:ExUnMStDTwo}
Let $N\geq 1$, $\Omega\subset\Rd$ be a bounded Lipschitz domain with $d\geq 2$. Assume that  an operator $\boldA$ satisfies \ref{AO}, \ref{ATh} and \ref{AF} and  $M:\Omega\times\eR^{d\times N}\rightarrow[0,\infty)$ is an ${\mathcal N}$--function such that $M^*$ satisfies $\Delta_2$--condition and for all $R>0$ we have
\begin{equation}\label{IntMSphere2}
	\int_\Omega\sup_{|\boldxi|=R}M^*(x,\boldxi)\dx<\infty.
\end{equation}
Then there exists a unique weak solution to problem \eqref{EllPr}, which is a function $\boldu\in W^1_0L^M(\Omega;\eR^N)$ such that
\begin{equation}\label{EllPrWeakFormMStDTw}
	\int_\Omega \boldA(x,\nabla\boldu(x))\cdot\nabla\bphi(x)\dx=\int_\Omega \boldF(x)\cdot\nabla\bphi(x)\dx
\end{equation}
is satisfied for all $\bphi\in W^1_0L^M(\Omega;\eR^N)$.
\end{Lemma}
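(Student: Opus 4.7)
The plan is to follow the Galerkin plus monotonicity strategy of \cite{GSG208}. Since $M^*$ satisfies the $\Delta_2$--condition, one has $L^{M^*}(\Omega;\eR^{d\times N}) = E^{M^*}(\Omega;\eR^{d\times N})$ (hence separable), and consequently $L^M(\Omega;\eR^{d\times N})$ is the dual of a separable Banach space and admits the Banach--Alaoglu weak$^*$ compactness of bounded sets. This is the structural fact that powers the whole argument.

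First, I would set up a Galerkin scheme: fix finite--dimensional subspaces $X_n \subset C^\infty_c(\Omega;\eR^N)$ with $\bigcup_n X_n$ dense in $W^{1,1}_0(\Omega;\eR^N)$, and solve the projected system on $X_n$ by the standard consequence of Brouwer's fixed point theorem, using \ref{AO} for continuity of the associated map and \ref{ATh} for its coercivity. Testing the Galerkin approximation with its own solution $\boldu^n$ and using \ref{ATh}, Young's inequality, and the integrability assumption \eqref{IntMSphere2} (which implies $\boldF \in E^{M^*}$ provided $\boldF$ is taken e.g.\ in $L^\infty$) yields the uniform bound
\begin{equation*}
\int_\Omega \bigl(M(x,\nabla\boldu^n) + M^*(x,\boldA(x,\nabla\boldu^n))\bigr)\dx \leq C.
\end{equation*}

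Extracting a subsequence, $\nabla\boldu^n \WSCon \nabla\boldu$ in $L^M(\Omega;\eR^{d\times N})$ and $\boldA(\cdot,\nabla\boldu^n) \WCon \boldZ$ in $L^{M^*}(\Omega;\eR^{d\times N})$ for some $\boldu$ and $\boldZ$. By the definition of $W^1_0 L^M(\Omega;\eR^N)$ as a weak$^*$ closure, $\boldu$ lies in this space. Passing to the limit in the Galerkin identity against an arbitrary $\bphi \in \bigcup_n X_n$ and then extending by weak$^*$ density yields
\begin{equation*}
\int_\Omega \boldZ\cdot \nabla\bphi\dx = \int_\Omega \boldF\cdot \nabla\bphi\dx \qquad \text{for all } \bphi \in W^1_0 L^M(\Omega;\eR^N).
\end{equation*}

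The main obstacle is to identify $\boldZ = \boldA(\cdot, \nabla\boldu)$: because $L^M$ is non-reflexive, no Nemytskii continuity is at hand and direct passage in the nonlinearity is impossible. The plan is a Minty monotonicity trick. Test the Galerkin equation with $\boldu^n$ to derive the energy identity $\int_\Omega \boldA(\cdot,\nabla\boldu^n)\cdot\nabla\boldu^n\dx = \int_\Omega \boldF\cdot\nabla\boldu^n\dx$, pass to the limit on the right (the right-hand side converges by weak$^*$ convergence of $\nabla\boldu^n$ paired with $\boldF \in E^{M^*}$), and combine with the monotonicity inequality
\begin{equation*}
\int_\Omega \bigl(\boldA(x,\nabla\boldu^n) - \boldA(x,\boldW)\bigr)\cdot (\nabla\boldu^n - \boldW)\dx \geq 0,
\end{equation*}
valid for any $\boldW \in L^\infty(\Omega; \eR^{d\times N})$ (for which $\boldA(\cdot, \boldW) \in L^{M^*}$ by \ref{AO} and \eqref{IntMSphere2}). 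In the limit this gives $\int_\Omega(\boldZ - \boldA(x,\boldW))\cdot(\nabla\boldu - \boldW)\dx \geq 0$, and the standard hemicontinuity choice $\boldW = \boldW_k - \lambda\bphi$ with $\boldW_k$ a bounded truncation approximating $\nabla\boldu$, followed by $\lambda \to 0_+$ and $k\to\infty$, identifies $\boldZ = \boldA(\cdot, \nabla\boldu)$ almost everywhere. Uniqueness follows at once from \ref{AF} by testing the difference of two solutions against itself.
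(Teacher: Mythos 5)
Your proposal is correct and follows essentially the same route as the paper's Appendix~\ref{Ape3} proof: a Galerkin scheme solved via Brouwer's fixed point, uniform modular estimates from \ref{ATh} and Young's inequality, weak$^*$ extraction exploiting $L^{M^*}=E^{M^*}$ under the $\Delta_2$--condition, a Minty-type truncation argument using \eqref{IntMSphere2}, Lemma~\ref{Lem:UBSeq} and Vitali's theorem to identify the nonlinear limit, and strict monotonicity \ref{AF} for uniqueness. The only small adjustment: choose the Galerkin subspaces so that their union is dense in $W^1_0E^M(\Omega;\eR^N)$ (as the paper does), not merely in $W^{1,1}_0(\Omega;\eR^N)$, since density in $W^{1,1}_0$ alone does not let you pass the pairing with $\boldZ\in L^{M^*}$ to the limit when extending the weak formulation to all $\bphi\in W^1_0L^M(\Omega;\eR^N)$.
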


The second part of the statement of Theorem~\ref{Thm:MainOne}, i.e., the case when $M$ satisfies $\Delta_2$--condition, is covered by the following lemma. Since this result is indeed new, we provide the complete proof here.
\begin{Lemma}\label{Lem:ExMDTwo}
Let $N\geq 1$, $\Omega\subset\Rd$ be a bounded Lipschitz domain with $d\geq 2$. Assume that  an operator $\boldA$ satisfies \ref{AO}, \ref{ATh} and \ref{AF} and  $M:\Omega\times\eR^{d\times N}\rightarrow[0,\infty)$ is an ${\mathcal N}$--function such that it satisfies  $\Delta_2$--condition and for all $R>0$ we have
\begin{equation}\label{IntMSphere}
	\int_\Omega\sup_{|\boldxi|=R}M(x,\boldxi)\dx<\infty.
\end{equation}
Then there exists a unique weak solution to problem \eqref{EllPr}, which is a function $\boldu\in V^M_0$ such that
\begin{equation}\label{EllPrWeakForm}
	\int_\Omega \boldA(x,\nabla\boldu(x))\cdot\nabla\bphi(x)\dx=\int_\Omega \boldF(x)\cdot\nabla\bphi(x)\dx
\end{equation}
is satisfied for all $\bphi\in V^M_0$.
\begin{proof}
We are not able to show directly the existence of $\boldu$ satisfying \eqref{EllPrWeakForm} because we have $\boldA(\cdot,\nabla \boldu)\in L^{M^*}(\Omega;\eR^{d\times N})$ only. Therefore we cannot utilize the method involving the weak$^*$ convergence in $L^M(\Omega)$. It turns out that one can find a weak solution of the dual problem to \eqref{EllPrWeakForm}, from which we then deduce the existence of a weak solution to the original problem. As $\boldA$ is strictly monotone  it is a homeomorphism on $\eR^{d\times N}$ therefore the inverse operator to $\boldA$, denoted as $\boldB$, exists. It is not difficult to show, thanks to \ref{ATh}--\ref{AF}, that the operator $\boldB$ fulfills
	\begin{equation}\label{MDTwoBGrCoercEst}
		\boldB(x,\boldzeta)\cdot\boldzeta\geq c\left(M(x,\boldB(x,\boldzeta))+M^*(x,\boldzeta)\right).
	\end{equation}
Moreover, $\boldB$ is strictly monotone.

Our first goal is to find  a function $\boldT\in L^{M^*}_{\div}\left(\Omega;\eR^{d\times N}\right)$ satisfying
	\begin{equation}\label{DualPr}
		\int_\Omega \boldB(x,\boldT(x)+\boldF(x))\cdot\boldW(x)\dx=0\text{ for all }\boldW\in E^{M^*}_{\div}(\Omega;\eR^{d\times N}).
	\end{equation}

The solvability of \eqref{DualPr} (the dual problem) will directly imply the statement of the lemma. Hence, let us focus on \eqref{DualPr}.
	We observe that the space $E^{M^*}_{\div}(\Omega;\eR^{d\times N})$ is separable since it is a closed subspace of the separable space $E^{M^*}\left(\Omega;\eR^{d\times N}\right)$. Thus there is $\{\boldW^i\}_{i=1}^\infty$, a linearly independent subset of $E^{M^*}_{\div}\left(\Omega;\eR^{d\times N}\right)$ such that $\overline{\bigcup_{k=1}^\infty Lin\{\boldW^i\}_{i=1}^k}^{\|\cdot\|_{L^{M}}}=E^{M^*}_{\div}\left(\Omega;\eR^{d\times N}\right)$. We construct Galerkin approximations to \eqref{DualPr}. We define $\boldT^k:=\sum_{i=1}^k\alpha_i^k\boldW^i$ for $k\in\eN$, where $\alpha^k_i\in\eR$ are chosen in such a way that
	\begin{equation}\label{GalAppDualFormMDTwo}
		\int_\Omega \boldB\left(x,\boldT^k+\boldF\right)\cdot\boldW^i\dx=0
	\end{equation}
	for all $i=1,\ldots,k$.\\
	\textbf{Step 1}: Let us show the existence of $(\alpha^{k}_1,\ldots,\alpha_k^k)\in\eR^k$ satisfying \eqref{GalAppDualFormMDTwo}. We want to apply Lemma \ref{Lem:FixP} on a mapping $\bolds:\eR^k\rightarrow \eR^k$ defined as
	\begin{equation*}
		s_j(\balpha)=\int_\Omega\boldB(x,\boldT^k+\boldF)\cdot\boldW^j\dx,\quad j=1,\ldots,k.
	\end{equation*}
	First, we show that $\bolds$ is continuous. Let us suppose that $\balpha^n\rightarrow\balpha$ in $\eR^k$. We observe that for $h^n_j:=\left(\boldB\left(x,\sum_{i=1}^k\alpha^n_i\boldW^i+\boldF\right)-\boldB\left(x,\sum_{i=1}^k\alpha_i\boldW^i\right)\right)\cdot\boldW^j$ we have $h^n_j\rightarrow 0$ as $n\rightarrow\infty$ a.e. in $\Omega$. Then, using \eqref{MDTwoBGrCoercEst} and the Young inequality one derives the estimate
	\begin{equation*}
		\int_\Omega M\left(x,\boldB\left(x,\sum_{i=1}^k\alpha^n_i\boldW^i\right)\right)\dx\leq c.
	\end{equation*}
	Therefore $|h^n_j|$ is uniformly integrable since we also have $\boldB\left(x,\sum_{i=1}^k\alpha_i\boldW^i\right)\in L^1\left(\Omega;\eR^{d\times N}\right)$ and $\boldW_j\in L^\infty\left(\Omega;\eR^{d\times N}\right)$. Consequently, $\bolds$ is continuous since by the Vitali theorem we get
	\begin{equation*}
		|\bolds(\balpha^n)-\bolds(\balpha)|\leq k\max_{j=1,\ldots,k}\int_\Omega |h^n_j|\dx\rightarrow 0\text{ as }n\rightarrow\infty.
	\end{equation*}
	Next, we verify that $\bolds$ satisfies \eqref{NonNegCond}. We denote $\boldW(\balpha):=\sum_{i=1}^k\alpha_i\boldW^i$ and show that
	\begin{equation}\label{MDTwAuxCon}
		\|\boldW(\balpha)+\boldF\|_{L^{M^*}(\Omega)}\rightarrow\infty\text{ as }|\balpha|\rightarrow \infty.
	\end{equation}
	We observe that $\min_{|\balpha|=1}\|\boldW(\balpha)\|>0$, this follows from the fact that $\{\boldW^i\}_{i=1}^k$ are linearly independent. Since $\boldF\in L^\infty\left(\Omega;\eR^{d\times N}\right)$, we find $R_0>0$ such that $\frac{\|\boldF\|_{L^{M^*}}}{|\balpha|}\leq \frac{1}{2}\min_{|\bbeta|=1}\left\|\boldW(\bbeta)\right\|_{L^{M^*}(\Omega)}$ for all $\balpha\in\eR^k$ with $|\balpha|\geq R_0$. Considering such $\balpha$ we get by the triangle inequality
	\begin{equation*}
		\begin{split}
		\|\boldW(\balpha)+\boldF\|_{L^{M^*}(\Omega)}&\geq \|\boldW(\balpha)\|_{L^{M^*}(\Omega)}-\|\boldF\|_{L^{M^*}(\Omega)}\geq |\balpha|\left(\left\|\boldW\left(\frac{\alpha}{|\balpha|}\right)\right\|_{L^{M^*}(\Omega)}-\frac{\|\boldF\|_{L^{M^*}(\Omega)}}{|\balpha|}\right)\\
		&\geq \frac{1}{2}|\balpha|\min_{|\bbeta|=1}\|\boldW(\bbeta))\|_{L^{M^*}(\Omega)}.
		\end{split}
	\end{equation*}
	Hence \eqref{MDTwAuxCon} follows. By \eqref{MDTwoBGrCoercEst}, the Young inequality and \eqref{SSF} we have
	\begin{equation*}
		\begin{split}	
		\bolds(\balpha)\cdot\balpha&=\int_\Omega \boldB\left(x,\boldW(\balpha)+\boldF\right)\cdot\boldW(\balpha)\dx\geq \int_\Omega cM^*\left(x,\boldW(\balpha)+\boldF\right)-M^*\left(x,\frac{1}{c}\boldF\right)\dx\\
		&=c\int_\Omega M^*\left(x,\boldW{\balpha}+\boldF\right)\dx-\int_\Omega M^*\left(x,\frac{1}{c}\boldF\right)\dx\\
		&\geq c\|\boldW(\balpha)+\boldF\|_{L^{M^*}(\Omega)}-1-\int_\Omega M^*\left(x,\frac{1}{c}\boldF\right)\dx.
		\end{split}
	\end{equation*}
	Then using \eqref{MDTwAuxCon} we find $R\geq R_0$ such that $\bolds(\balpha)\cdot\balpha\geq 0$ for all $\balpha$ such that $|\balpha|=R$. Thus according to Lemma \ref{Lem:FixP} we have the existence of $\balpha$ satisfying \eqref{GalAppDualFormMDTwo}.
	\\
	\textbf{Step 2}: Multiplying \eqref{GalAppDualFormMDTwo} by $\alpha^k_i$ and summing over $i=1,\ldots,k$ yields
	\begin{equation}\label{MDTwoGalTestApp}
		\int_{\Omega}\boldB\left(x,\boldT^k+\boldF\right)\cdot\boldT^k\dx=0.
	\end{equation}
Applying \eqref{MDTwoBGrCoercEst}, the Young inequality and the convexity of $M$ in the second variable we deduce from \eqref{MDTwoGalTestApp} that
\begin{equation*}
\begin{split}
	c&\int_\Omega M\left(x,\boldB\left(x,\boldT^k+\boldF\right)\right)+M^*\left(x,\boldT^k+\boldF\right)\dx\leq  \int_\Omega \boldB\left(x,\boldT^k+\boldF\right)\cdot\left(\boldT^k+\boldF\right)\dx\\	
&=  \int_\Omega \frac{c}{2}\boldB\left(x,\boldT^k+\boldF\right)\cdot\frac{2}{c}\boldF\dx\leq \int_\Omega \frac{c}{2}M\left(x,\boldB\left(x,\boldT^k+\boldF\right)\right)+M^*\left(x,\frac{2}{c}\boldF\right)\dx.
\end{split}
\end{equation*}
Hence, it follows that
	\begin{equation}\label{MDTwApEst}
		\frac{c}{2}\int_\Omega M\left(x,\boldB\left(x,\boldT^k+\boldF\right)\right)\dx+c\int_\Omega M^*\left(x,\boldT^k+\boldF\right)\dx\leq \int_\Omega M^*\left(x,\frac{2}{c}\boldF\right)\dx.
	\end{equation}
Since the right hand side of the latter inequality is finite as $\boldF\in L^\infty(\Omega;\eR^{d\times N})$, we infer the existence of $\boldT\in L^{M^*}_{\div}(\Omega;\eR^{N})$ and $\bar\boldB\in E^{M}(\Omega;\eR^{d\times N})$ such that (note here that for \eqref{MDTwoDualPrAppConv}$_2$ we use the fact that $M$ satisfies $\Delta_2$--condition)
		\begin{equation}\label{MDTwoDualPrAppConv}
			\begin{alignedat}{2}
				\boldT^k+\boldF&\WSCon\boldT+\boldF&&\text{ in }L^{M^*}(\Omega;\eR^{d\times N}),\\
				\boldB(\cdot,\boldT^k+\boldF)&\WSCon \bar\boldB &&\text{ in }E^M(\Omega;\eR^{d\times N}).
			\end{alignedat}
		\end{equation}
		Employing the convergence \eqref{MDTwoDualPrAppConv}$_2$ in \eqref{MDTwoGalTestApp} we have for all $i\in\eN$
	\begin{equation}
		\int_\Omega \bar\boldB\cdot\boldW^i \dx =0.\label{end1}
	\end{equation}
Consequently, since $\{\boldW^i\}_{i=1}^{\infty}$ forms a basis we also have for all $\boldW\in E^{M^*}_{\div}\left(\Omega;\eR^{d\times N}\right)$
\begin{equation}
		\int_\Omega \bar\boldB\cdot\boldW \dx =0. \label{remains}
	\end{equation}
Thus to prove \eqref{DualPr}, it remains  to identify $\bar\boldW$.

Multiplying the $i$-th equation in \eqref{end1} by $\alpha_i^k$ and summing the result over $i=1,\ldots,k$ yields
	\begin{equation*}
		\int_\Omega \bar\boldB\cdot(\boldT^k+\boldF)\dx=\int_\Omega \bar\boldB\cdot\boldF\dx.
	\end{equation*}
	We apply the convergence \eqref{MDTwoDualPrAppConv}$_1$, which is possible since $\bar\boldB\in L^M(\Omega;\eR^{d\times N})=E^M(\Omega;\eR^{d\times N})$ as $M$ is assumed to satisfy $\Delta_2$--condition, to obtain
	\begin{equation}\label{MDTwoLimitProd}
		\int_\Omega \bar\boldB\cdot\boldT\dx=0.
	\end{equation}
	Let us identify $\bar\boldB$ with the help of the variant of Minty's trick for nonseparable and nonreflexive function spaces. First, using the monotonicity of $\boldB$ and \eqref{MDTwoGalTestApp} we get
	\begin{equation*}
	\begin{split}
		0&\leq \int_\Omega\left(\boldB(x,\boldT^k+\boldF)-\boldB(x,\boldW)\right)\cdot(\boldT^k+\boldF-\boldW)\dx\\&=\int_\Omega \boldB(x,\boldT^k+\boldF)\cdot(\boldF-\boldW)-\boldB(x,\boldW)\cdot(\boldT^k+\boldF-\boldW)\dx
	\end{split}
	\end{equation*}
	for an arbitrary but fixed $\boldW\in L^\infty(\Omega;\eR^{d\times N})$. Then performing the limit passage $k\to\infty$ in the latter inequality and using \eqref{MDTwoDualPrAppConv} and \eqref{MDTwoLimitProd} we arrive at
	\begin{equation}\label{LimMonoIneq}
		0\leq \int_\Omega \bar\boldB\cdot(\boldF-\boldW)-\boldB(x,\boldW)\cdot(\boldT+\boldF-\boldW)\dx=\int_\Omega \left(\bar\boldB-\boldB(x,\boldW)\right)\cdot(\boldT+\boldF-\boldW)\dx.
	\end{equation}
	Next, we denote for a positive $l$
\begin{equation*}
	\Omega_l=\{x\in\Omega: |\boldT(x)+\boldF(x)|\leq l\}
\end{equation*}
and let $\chi_l$ be the characteristic function of $\Omega_l$. Then choosing arbitrary $0<l<m$, $h\in(0,1)$ and $\boldZ\in L^\infty(\Omega;\eR^{d\times N})$ we set $\boldW=(\boldT+\boldF)\chi_m+h\boldZ\chi_l$ in \eqref{LimMonoIneq} to obtain
\begin{equation}\label{AlmostMintyIneq}
		0\leq \int_{\Omega\setminus\Omega_m} \bar\boldB\cdot(\boldT+\boldF)\dx -h\int_{\Omega_l}\left(\bar\boldB-\boldB(x,\boldT+\boldF+h\boldZ)\right)\cdot\boldZ\dx.
	\end{equation}
We see that $|\Omega\setminus\Omega_m|\rightarrow 0$ and $\bar\boldB\cdot(\boldT+\boldF)\chi_{\Omega\setminus\Omega_m}\to 0$ a.e. in $\Omega$ as $m\to\infty$. Hence by the Lebesgue dominated convergence theorem we get performing the limit passage $m\to\infty$ in \eqref{AlmostMintyIneq}
\begin{equation}\label{AlmostAlmostMintyIneq}
	0\leq -h\int_{\Omega_l}\left(\bar\boldB-\boldB(x,\boldT+\boldF+h\boldZ)\right)\cdot\boldZ\dx.
\end{equation}
We observe next that using \eqref{IntMSphere} we have
\begin{equation*}
	\int_{\Omega_l} M(x,\boldB(\boldT+\boldF+h\boldZ))\dx\leq \int_\Omega\sup_{|\boldxi|=\|\boldB(\boldT+\boldF+h\boldZ)\|_{L^\infty(\Omega_l)}}M(x,\boldxi)\dx<\infty
\end{equation*}
uniformly in $h\in(0,1)$ as $\sup_{h\in(0,1)}\|\boldB(\boldT+\boldF+h\boldZ)\|_{L^\infty(\Omega_l)}<\infty$, which follows from Lemma~\ref{Lem:UBSeq}. Then it follows that $\{\boldB(\cdot,\boldT+\boldF+h\boldZ)\}_{h\in(0,1)}$ is uniformly integrable by Lemma~\ref{Lem:UnifIntegr}. Furthermore, $\boldT+\boldF+h\boldZ\to \boldT+\boldF$ a.e. in $\Omega$ as $h\to 0$.  Accordingly, dividing \eqref{AlmostAlmostMintyIneq} by $h$ we infer by the Vitali convergence theorem
\begin{equation}\label{MintyIneq}
	0\geq\int_{\Omega_l}\left(\bar\boldB-\boldB(x,\boldT+\boldF)\right)\cdot\boldZ\dx
\end{equation}	
for any $\boldZ\in L^\infty(\Omega;\eR^{d\times N})$. Setting
\begin{equation*}
\boldZ=\frac{\bar\boldB-\boldB(x,\boldT+\boldF)}{1+|\bar\boldB-\boldB(x,\boldT+\boldF)|}
\end{equation*}
in \eqref{MintyIneq} we deduce $\bar\boldB(x)=\boldB(x,\boldT(x)+\boldF(x))$ in $\Omega_l$ for an arbitrary $l>0$. As $|\Omega\setminus\Omega_l|\to 0$ as $l\to\infty$, we conclude $\bar\boldB(x)=\boldB(x,\boldT(x)+\boldF(x))$ for a.a. $x\in\Omega$.\\
\textbf{Step 5}: Based on \eqref{DualPr}, we show that there is $\boldu\in V^M_0$ such that $\nabla \boldu=\boldB(\cdot,\boldT+\boldF)$ and $\boldu$ satisfies \eqref{EllPrWeakForm}. We denote $\tilde{\boldB}(x):=\boldB(x,\boldT(x)+\boldF(x))$. Extending $\tilde{\boldB}$ by zero in $\Rd\setminus\Omega$ we obtain from \eqref{DualPr} that
	\begin{equation}\label{OrtCond}
		\int_{\Rd}\tilde\boldB(x)\cdot\boldW(x)\dx=0
	\end{equation}
	for all $\boldW\in C^\infty(\Rd;\eR^{d\times N})$ with $\div\boldW=0$ in $\Rd$. We fix a sequence $\{\delta_k\}_{k=1}^\infty$ such that $\delta_k\rightarrow 0$ as $k\rightarrow\infty$ and denote as $\boldB^k$ a mollification of $\tilde\boldB$ with a parameter $\delta_k$. Obviously, we have $\boldB^k\in C^\infty(\Rd;\eR^{d\times N})$ and
	\begin{equation}\label{BkStrConv}
		\boldB^k\rightarrow\boldB\text{ in }L^1(\Rd;\eR^{d\times N})\text{ as }k\rightarrow\infty.
	\end{equation}
	By the Fubini theorem we obtain
	\begin{equation*}
		\int_{\Rd}\boldB^k\cdot\boldW\dx=\int_{\Rd}(\tilde{\boldB}*\rho_{\delta_k})(x)\cdot \boldW(x)\dx=\int_{\Rd}\tilde{\boldB}(z)\cdot(\boldW*\rho_{\delta_k})(z)\dz
	\end{equation*}
	for any $\boldW\in C^\infty(\Rd;\eR^{d\times N})$. As mollification preserves solenoidality, we have
	\begin{equation*}
		\int_{\Rd}\boldB^k\cdot\boldW\dx=0
	\end{equation*}
	for any $\boldW\in C^\infty(\Rd;\eR^{d\times N})$ with $\div\boldW=0$ in $\Rd$ by \eqref{OrtCond}. Then de Rham theorem yields the existence of a distribution $\boldp^k$ such that $\nabla \boldp^k=\boldB^k$. Thus we infer $\boldp^k\in C^\infty(\Rd;\eR^N)$. We fix $R>0$ such that for all $k\in\eN$ $\supp\boldB^k$ is contained in the ball $B_R$. Then for all $k\in\eN$ $\boldp^k$ is equal to some constant $\bar \boldp^k$ in $\Rd\setminus\Omega$ and defining $\uk:=\boldp^k-\bar \boldp^k$ we obtain by the Poincar\'e inequality that
	\begin{equation*}
		\|\uk-\boldu^l\|_{L^1(B_R)}\leq c\|\nabla (\uk-\boldu^l)\|_{L^1(B_R)}=c\|\boldB^k-\boldB^l\|_{L^1(B_R)}.
	\end{equation*}
	The latter inequality implies that $\{\uk\}_{k=1}^\infty$ is a Cauchy sequence in $L^1(B_R)$ since the sequence $\{\boldB^k\}_{k=1}^\infty$ is a Cauchy sequence in $L^1(B_R)$ due to \eqref{BkStrConv}. Therefore $\{\uk\}_{k=1}^\infty$ possesses a limit $\boldu\in L^1(B_R)$. Moreover, as the sequence $\{\nabla \uk\}_{k=1}^\infty$ converges strongly in $L^1(B_R)$
 and for all $k\in\eN$ $\supp \uk\subset B_R$, we have $\boldu\in W^{1,1}_0(B_R)$. Finally, we observe that $\nabla \boldu=\tilde \boldB$. Hence $\boldu$ is equal to a constant in $\Rd\setminus\Omega$ and this constant is zero. Thus we have $\boldu\in W^{1,1}_0(\Omega;\eR^N)$ and as $\nabla \boldu=\tilde\boldB\in E^M(\Omega;\eR^{d\times N})$, i.e., $\boldu\in V_0^M$, by the definition of $\boldB$ we obtain
	\begin{equation*}
		\int_\Omega \boldA(x,\nabla \boldu)\cdot \nabla\bphi\dx=\int_\Omega \boldA(x,\boldB(\boldT+\boldF))\cdot\nabla\bphi\dx=\int_\Omega(\boldT+\boldF)\cdot\nabla\bphi\dx
	\end{equation*}
	for all $\bphi\in V_0^M$. As $\boldT\in L^{M^*}_{\div}\left(\Omega;\eR^{d\times N}\right)$, we see, thanks to the fact that $M$ satisfies $\Delta_2$ condition that\footnote{Indeed, since $\boldT\in L^{M^*}_{\div}\left(\Omega;\eR^{d\times N}\right)$, it can be approximated in the weak$^*$ topology by a sequence of divergence free functions belonging to $E^{M^*}_{\div}\left(\Omega;\eR^{d\times N}\right)$ and this approximative sequence can be again approximated by a sequence of smooth divergence free functions in strong topology  and consequently
$$
\int_{\Omega} \boldT \cdot \nabla \boldv \dx = 0
$$
for all $\boldv \in V_0^M.$
}
$\boldu$ satisfies \eqref{EllPrWeakForm}.\\
	\textbf{Step 6}: One easily obtains uniqueness of a weak solution. Supposing that $\boldu_1, \boldu_2$ are different weak solutions of \eqref{EllPrWeakForm}, we get after testing the difference of weak formulations for $\boldu_1$ and $\boldu_2$ by the difference $\boldu_1-\boldu_2$, which is a proper test function in \eqref{EllPrWeakForm} as $\boldu_1,\boldu_2\in V^M_0$, that
	\begin{equation*}
		\int_\Omega \left(\boldA(x,\nabla \boldu_1)-\boldA(x,\nabla \boldu_2)\right)\cdot (\nabla \boldu_1-\nabla \boldu_2)\dx=0.
	\end{equation*}
	Since $\boldA$ is strictly monotone, we have $\nabla (\boldu_1-\boldu_2)=0$ a.e. in $\Omega$ and the zero trace of $\boldu_1-\boldu_2$ on $\partial\Omega$ implies $\boldu_1=\boldu_2$ a.e. in $\Omega$.
\end{proof}
\end{Lemma}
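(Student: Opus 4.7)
The fundamental obstruction is that, under $\Delta_2$ for $M$ (but not for $M^*$), the space $L^{M^*}(\Omega;\eR^{d\times N})$ is nonseparable and nonreflexive, so a direct Galerkin approximation of the primal formulation cannot extract a weak limit of the stress $\boldA(\cdot,\nabla\boldu^k)$. My plan is therefore to pass to a dual problem: since $\boldA(x,\cdot)$ is strictly monotone, continuous, and coercive by \ref{ATh} and \ref{AF}, it admits a continuous inverse $\boldB(x,\cdot)$, and a short Young-inequality computation transfers the coercivity to $\boldB(x,\boldzeta)\cdot\boldzeta \geq c(M(x,\boldB(x,\boldzeta))+M^*(x,\boldzeta))$. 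I would then solve the dual problem
\begin{equation*}
\int_\Omega \boldB(x,\boldT(x)+\boldF(x))\cdot\boldW(x)\dx = 0 \qquad \text{for all } \boldW\in E^{M^*}_{\div}(\Omega;\eR^{d\times N}),
\end{equation*}
with $\boldT\in L^{M^*}_{\div}(\Omega;\eR^{d\times N})$. Once this is solved, $\boldB(\cdot,\boldT+\boldF)$ lies in the annihilator of divergence-free test fields and is thus a gradient, from which the desired $\boldu\in V^M_0$ and the weak formulation \eqref{EllPrWeakForm} follow.

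To construct $\boldT$, I would exploit that $E^{M^*}_{\div}(\Omega;\eR^{d\times N})$ is separable (as a closed subspace of $E^{M^*}$), pick a linearly independent dense system $\{\boldW^i\}_{i=1}^\infty$, and seek $\boldT^k=\sum_{i=1}^k\alpha_i^k\boldW^i$ satisfying the Galerkin identity tested against $\boldW^1,\ldots,\boldW^k$. Existence of $\balpha^k$ follows from the Brouwer-type fixed-point Lemma~\ref{Lem:FixP}: continuity of the associated map $\bolds$ is handled by Vitali, and the sign condition $\bolds(\balpha)\cdot\balpha\ge 0$ on a large sphere comes from the $L^{M^*}$-coercivity of $\boldB$ together with the fact that linear independence forces $\|\boldW(\balpha)+\boldF\|_{L^{M^*}}\to\infty$ as $|\balpha|\to\infty$. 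Testing the Galerkin identity by $\boldT^k$ itself and applying Young's inequality to the cross term with $\boldF\in L^\infty$ gives uniform bounds on $\int_\Omega M^*(x,\boldT^k+\boldF)$ and on $\int_\Omega M(x,\boldB(x,\boldT^k+\boldF))$. Crucially, because $M$ satisfies $\Delta_2$, we have $L^M=E^M$, so these bounds produce not only a weak$^*$ limit $\boldT$ in $L^{M^*}_{\div}$ but also a genuine weak limit $\bar\boldB$ of $\boldB(\cdot,\boldT^k+\boldF)$ in $E^M$, which is essential for the subsequent duality pairing.

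The main technical step, and the point where hypothesis \eqref{IntMSphere} really enters, is identifying $\bar\boldB$ with $\boldB(\cdot,\boldT+\boldF)$ via a Minty-type trick adapted to the nonreflexive setting. Passing to the limit in the Galerkin identity gives $\int_\Omega \bar\boldB\cdot\boldW\dx=0$ for all $\boldW\in E^{M^*}_{\div}$, and testing with $\boldT^k$ before the limit and using the weak$^*$ convergence of $\boldT^k$ against $\bar\boldB\in E^M=(L^{M^*})'$ yields $\int_\Omega \bar\boldB\cdot\boldT\dx=0$. Writing the monotonicity inequality for $\boldB$ against an arbitrary $\boldW\in L^\infty$, passing to the limit, and specializing to $\boldW=(\boldT+\boldF)\chi_{\Omega_m}+h\boldZ\chi_{\Omega_l}$ with $\Omega_l=\{|\boldT+\boldF|\le l\}$, I would first send $m\to\infty$ by Lebesgue dominated convergence on the shrinking set $\Omega\setminus\Omega_m$, then divide by $h$ and send $h\to 0_+$. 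The delicate point is justifying the limit $h\to 0_+$: on $\Omega_l$ the argument $\boldT+\boldF+h\boldZ$ stays bounded in $L^\infty$, so \eqref{IntMSphere} controls $\int_{\Omega_l}M(x,\boldB(x,\boldT+\boldF+h\boldZ))\dx$ uniformly in $h$, which via Lemma~\ref{Lem:UnifIntegr} delivers uniform integrability and lets Vitali identify the pointwise limit. Choosing $\boldZ=(\bar\boldB-\boldB(\cdot,\boldT+\boldF))/(1+|\bar\boldB-\boldB(\cdot,\boldT+\boldF)|)$ then forces $\bar\boldB=\boldB(\cdot,\boldT+\boldF)$ a.e. on each $\Omega_l$, hence on $\Omega$.

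Having solved the dual problem, recovery of $\boldu$ is a de Rham argument: extending $\tilde\boldB:=\boldB(\cdot,\boldT+\boldF)$ by zero, mollifying, applying de Rham to obtain smooth scalar potentials, normalizing them to vanish on the complement of a fixed ball, and invoking Poincar\'e on the Cauchy sequence of mollifications produces $\boldu\in W^{1,1}_0(\Omega;\eR^N)$ with $\nabla\boldu=\tilde\boldB\in E^M$, i.e., $\boldu\in V^M_0$. Because $M$ satisfies $\Delta_2$, any $\boldT\in L^{M^*}_{\div}$ annihilates all gradients of $V^M_0$ functions (by approximation through $E^{M^*}_{\div}$ and then $C^\infty_{\div}$), so $\int_\Omega(\boldT+\boldF)\cdot\nabla\bphi=\int_\Omega\boldF\cdot\nabla\bphi$ for every $\bphi\in V^M_0$, which yields \eqref{EllPrWeakForm}. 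Uniqueness is then routine: the difference of two solutions lies in $V^M_0$ and is thus an admissible test function, so strict monotonicity of $\boldA$ forces $\nabla(\boldu_1-\boldu_2)=\bzero$, and the zero trace finishes the proof. I expect the Minty identification to be the main obstacle, and the role of \eqref{IntMSphere} there (replacing the missing $\Delta_2$ for $M^*$) to be the genuinely novel ingredient compared with the reflexive or $M^*$-$\Delta_2$ cases.
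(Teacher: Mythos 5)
Your proposal follows essentially the same route as the paper's proof: inversion of $\boldA$ to get $\boldB$ with the transferred coercivity, Galerkin approximation of the dual problem in the separable space $E^{M^*}_{\div}$ via the Brouwer-type fixed point lemma, uniform modular estimates giving weak$^*$ convergence of $\boldT^k$ and (thanks to $L^M=E^M$ under $\Delta_2$) a weak limit $\bar\boldB$, the Minty trick on truncation sets $\Omega_l$ where \eqref{IntMSphere} together with the $L^\infty$-bound on $\boldB$ and Vitali identifies $\bar\boldB=\boldB(\cdot,\boldT+\boldF)$, and finally the de Rham/mollification recovery of $\boldu\in V^M_0$ and uniqueness by strict monotonicity. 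This matches the paper's argument step for step, so I have nothing substantive to add.
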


\section{Proof of Theorem~\ref{Thm:MainTwo}}\label{Sec:1.2}

Firstly, we formulate the lemmata concerning the existence and uniqueness of a solution to problem \eqref{StudPr} for an arbitrary but fixed $\varepsilon>0$ that are direct consequence of Theorem~\ref{Thm:MainOne}, where \eqref{Cis1}, \eqref{Cis2} respectively follow from \ref{MTh}. For simplicity we denote $M^\varepsilon(x,\boldxi)=M\left(\frac{x}{\varepsilon},\boldxi\right)$ for fixed $\varepsilon$.
\begin{Lemma}\label{Lem:WSExist}
 Let $\Omega\subset\Rd$ be a bounded Lipschitz domain, the operator $\boldA$ satisfy \ref{AO}-\ref{AF}, the $\mathcal{N}$--function $M$ satisfy \ref{MO}-\ref{MTh} and the conjugate $\mathcal{N}$--function $M^*$ satisfy $\Delta_2$--condition. Then for fixed $\varepsilon\in(0,1)$ there exists a unique weak solution of the problem \eqref{StudPr}, which is a function $\ueps\in W_0^1L^{M^\varepsilon}\left(\Omega;\eR^N\right)$ such that
\begin{equation}\label{WFEpsPr}
	\int_{\Omega} \boldA\left(\frac{x}{\varepsilon},\nabla\ueps(x)\right)\cdot\nabla\bphi(x)\dx=\int_\Omega \boldF(x)\cdot\nabla\bphi(x)\dx
\end{equation}
is satisfied for all $\bphi\in W^1_0L^{M^\varepsilon}\left(\Omega;\eR^N\right)$.
\end{Lemma}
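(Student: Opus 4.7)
The plan is to reduce the claim to a direct application of Lemma~\ref{Lem:ExUnMStDTwo}, viewing the $\varepsilon$-problem as an instance of \eqref{EllPr} with a rescaled nonlinearity. Concretely, for fixed $\varepsilon \in (0,1)$ I would set $\boldA^\varepsilon(x,\boldxi):=\boldA(x/\varepsilon,\boldxi)$ and work with the $\mathcal{N}$-function $M^\varepsilon(x,\boldxi):=M(x/\varepsilon,\boldxi)$ on $\Omega\times \eR^{d\times N}$. The first task is to verify that $\boldA^\varepsilon$ inherits \ref{AO}, \ref{ATh} and \ref{AF} from $\boldA$. Caratheodory measurability/continuity \ref{AO} is preserved under a measurable change of variable; strict monotonicity \ref{AF} is pointwise and therefore immediate; the growth/coercivity condition \ref{ATh} transfers via the identity $(M^\varepsilon)^*(x,\boldxi)=M^*(x/\varepsilon,\boldxi)$, giving
\[
\boldA^\varepsilon(x,\boldxi)\cdot\boldxi\ge c\bigl(M^\varepsilon(x,\boldxi)+(M^\varepsilon)^*(x,\boldA^\varepsilon(x,\boldxi))\bigr)
\]
for a.a.\ $x\in\Omega$ and all $\boldxi$, since the rescaled inequality holds for a.a.\ $y\in Y$ and $\boldA$ is $Y$-periodic.

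Next I would check the two hypotheses on $M^\varepsilon$ demanded by Lemma~\ref{Lem:ExUnMStDTwo}. The $\Delta_2$-condition for $(M^\varepsilon)^*$ follows from the $\Delta_2$-condition for $M^*$: since $M^*$ is $Y$-periodic in the first variable, any $\Delta_2$-inequality $M^*(y,2\boldxi)\le K M^*(y,\boldxi)+h(y)$ with $h\in L^1_{per}(Y)$ becomes $(M^\varepsilon)^*(x,2\boldxi)\le K(M^\varepsilon)^*(x,\boldxi)+h(x/\varepsilon)$, and the rescaled function $h(\cdot/\varepsilon)$ is integrable over the bounded $\Omega$. To verify the integrability condition \eqref{IntMSphere2} for $M^\varepsilon$, I would use \ref{MTh}: the upper bound $M(y,\boldxi)\le m_2(|\boldxi|)$ implies the pointwise conjugate bound $M^*(y,\boldxi)\ge m_2^*(|\boldxi|)$, while the lower bound $m_1(|\boldxi|)\le M(y,\boldxi)$ implies $M^*(y,\boldxi)\le m_1^*(|\boldxi|)$. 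Therefore
\[
\int_{\Omega}\sup_{|\boldxi|=R}(M^\varepsilon)^*(x,\boldxi)\dx\le |\Omega|\,m_1^*(R)<\infty
\]
for every $R>0$, independently of $\varepsilon$.

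With these verifications in hand, Lemma~\ref{Lem:ExUnMStDTwo} applied to $\boldA^\varepsilon$, $M^\varepsilon$ and the same $\boldF$ yields a unique $\ueps\in W^1_0 L^{M^\varepsilon}(\Omega;\eR^N)$ satisfying \eqref{EllPrWeakFormMStDTw} with $\boldA$ replaced by $\boldA^\varepsilon$, which is exactly the weak formulation \eqref{WFEpsPr}. There is no real obstacle here beyond bookkeeping; the only point worth a sentence of care is that the space $W^1_0 L^{M^\varepsilon}(\Omega;\eR^N)$ in the conclusion is precisely the Sobolev--Musielak--Orlicz space built from the Young function $M^\varepsilon$ as defined in Subsection~\ref{FSp}, so the class of admissible test functions in \eqref{WFEpsPr} matches the one supplied by Lemma~\ref{Lem:ExUnMStDTwo}. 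Uniqueness likewise transfers verbatim.
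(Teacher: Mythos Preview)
Your proposal is correct and matches the paper's approach exactly: the paper states that Lemma~\ref{Lem:WSExist} is a direct consequence of Theorem~\ref{Thm:MainOne} (via Lemma~\ref{Lem:ExUnMStDTwo}), with the integrability condition~\eqref{Cis2} following from~\ref{MTh}. Your verification of the hypotheses for the rescaled operator $\boldA^\varepsilon$ and $\mathcal{N}$--function $M^\varepsilon$ simply spells out the bookkeeping that the paper leaves implicit.
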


\begin{Lemma}\label{Lem:WSExistMDTw}
 Let $\Omega\subset\Rd$ be a bounded Lipschitz domain, the operator $\boldA$ satisfy \ref{AO}-\ref{AF}, the $\mathcal{N}$--function $M$ satisfy \ref{MO}-\ref{MTh} and $\Delta_2$--condition. Then for fixed $\varepsilon\in(0,1)$ there exists a unique weak solution of the problem \eqref{StudPr}, which is a function $\ueps\in V_0^{M^\varepsilon}$ such that
\begin{equation}\label{WFEpsPrMDTw}
	\int_{\Omega} \boldA\left(\frac{x}{\varepsilon},\nabla\ueps(x)\right)\cdot\nabla\bphi(x)\dx=\int_\Omega \boldF(x)\cdot\nabla\bphi(x)\dx
\end{equation}
is satisfied for all $\bphi\in V_0^{M^\varepsilon}$.
\end{Lemma}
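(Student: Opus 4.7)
The plan is to reduce this statement directly to Lemma~\ref{Lem:ExMDTwo}. Introduce the rescaled data $\boldA^\varepsilon(x,\boldxi) := \boldA(x/\varepsilon, \boldxi)$ and $M^\varepsilon(x,\boldxi) := M(x/\varepsilon, \boldxi)$ for $x\in\Omega$, and verify that the pair $(\boldA^\varepsilon, M^\varepsilon)$ satisfies on $\Omega$ all the hypotheses placed on $(\boldA,M)$ in Lemma~\ref{Lem:ExMDTwo}. Once this bookkeeping is done, existence and uniqueness of a solution $\ueps\in V_0^{M^\varepsilon}$ to \eqref{WFEpsPrMDTw} are a verbatim application of that lemma.

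The verifications are routine. Property \ref{AO} is preserved because $x\mapsto x/\varepsilon$ is a linear homeomorphism: measurability of $\boldA(\cdot,\boldxi)$ on $\Rd$ transfers to measurability of $\boldA^\varepsilon(\cdot,\boldxi)$ on $\Omega$, and continuity in the second variable is pointwise. The strict monotonicity \ref{AF} is inherited pointwise in $x$. The growth--coercivity condition \ref{ATh} is also inherited with the same constant $c$, once one observes that the convex conjugate taken pointwise in $x$ satisfies $(M^\varepsilon)^*(x,\cdot) = M^*(x/\varepsilon,\cdot)$. The $\Delta_2$--condition for $M^\varepsilon$ is inherited from that of $M$ with the same constant, since the estimate $M(y, 2\boldxi) \leq C\, M(y,\boldxi)$ valid for a.a.\ $y\in Y$ extends by $Y$--periodicity to a.a.\ $y\in\Rd$ and then pulls back to $x\in\Omega$.

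The only remaining hypothesis of Lemma~\ref{Lem:ExMDTwo} is the integrability condition \eqref{IntMSphere}. Here I would invoke \ref{MTh}: the pointwise bound $M(y,\boldxi)\leq m_2(|\boldxi|)$ on $Y$ extends by $Y$--periodicity to all of $\Rd$, so
\begin{equation*}
\int_\Omega \sup_{|\boldxi|=R} M^\varepsilon(x,\boldxi)\dx \;\leq\; |\Omega|\, m_2(R) \;<\; \infty
\end{equation*}
for every $R>0$, since $m_2$ is a real-valued $\mathcal{N}$--function and $\Omega$ is bounded. The space $V_0^{M^\varepsilon}$ appearing in the statement is by construction the analogue of $V_0^M$ with $M$ replaced by the rescaled $\mathcal{N}$--function $M^\varepsilon$, so the weak formulation \eqref{WFEpsPrMDTw} is exactly \eqref{EllPrWeakForm} for the rescaled problem, and Lemma~\ref{Lem:ExMDTwo} yields the claim. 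There is no substantial obstacle: the whole argument is a change--of--variables reduction, and the mildly delicate point is only to keep track of the fact that periodicity of $M$ on $Y$ is precisely what is needed to upgrade the pointwise bound from $Y$ to $\Rd$ and thereby to $\Omega$.
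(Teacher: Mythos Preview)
Your proposal is correct and is exactly the approach the paper takes: the paper states that Lemma~\ref{Lem:WSExistMDTw} is a direct consequence of Theorem~\ref{Thm:MainOne} (equivalently, of Lemma~\ref{Lem:ExMDTwo}), with the integrability hypothesis \eqref{Cis1} following from \ref{MTh}. You have simply made the reduction explicit; the only cosmetic imprecision is that the general $\Delta_2$--condition allows an additive $L^1$ term $h$, but the periodic extension of $h$ to $\Omega$ is still integrable, so the argument goes through unchanged.
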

Next we state the estimate that is uniform with respect to $\varepsilon$.
\begin{Lemma}\label{Lem:AprBound}
	Let the assumptions of Lemmas \ref{Lem:WSExist} or \ref{Lem:WSExistMDTw} be satisfied and $\ueps$ be a weak solution of the problem \eqref{StudPr}.
	Then we have
	\begin{equation}\label{AprEst}
		\sup_{0<\varepsilon<1}\int_\Omega M^\varepsilon\left(\frac{x}{\varepsilon},\nabla\ueps(x)\right)+(M^\varepsilon)^*\left(\frac{x}{\varepsilon},\boldA^\varepsilon(x)\right)\dx\leq c<\infty
	\end{equation}
	and $\{\Aeps\}_{0<\varepsilon< 1}$ is bounded in $L^{m_2^*}(\Omega;\eR^{d\times N})$ and $\{\ueps\}_{0<\varepsilon< 1}$ is bounded in $V_0^{m_1}$.
	\begin{proof}
	Setting $\bphi=\ueps$ in \eqref{EllPrWeakFormMStDTw}, \eqref{EllPrWeakForm} respectively, we obtain the following identity
		\begin{equation}\label{WeakFormTestSol}
			\int_{\Omega}\Aeps\cdot\nabla\ueps\dx=\int_{\Omega}\boldF\cdot\nabla\ueps\dx.
		\end{equation}
		Using the Young inequality, the convexity of $M$ and the fact that the constant $c\leq 1$, which is an obvious consequence of the Young inequality, it follows from \eqref{WeakFormTestSol} that
		\begin{equation*}
			c\int_{\Omega} M^\varepsilon(x,\nabla\ueps)+(M^\varepsilon)^*(x,\Aeps)\dx\leq \int_{\Omega}(M^\varepsilon)^*\left(x,\frac{2}{c}\boldF\right)+\frac{c}{2}M^\varepsilon(x,\nabla\ueps)\dx.
		\end{equation*}
		Consequently, employing \ref{MTh} we obtain
		\begin{equation}\label{AEIneq}
			c\int_{\Omega}\frac{1}{2} m_1(|\nabla\ueps|)+m_2^*(|\Aeps|)\dx\leq c\int_{\Omega}\frac{1}{2} M^\varepsilon\left(x,\nabla\ueps\right)+(M^\varepsilon)^*\left(x,\Aeps\right)\dx\leq \int_{\Omega}m_1^*\left(\frac{2}{c}|\boldF|\right)\dx.
		\end{equation}
		Due to \eqref{Assumption:F} the integral on the right hand side is finite. Hence estimate \eqref{AprEst} and boundedness of $\{\ueps\}$ and $\{\boldA^\varepsilon\}$ follows provided we use also the Poincar\'e inequality, see e.g. \cite[Section 2.4]{G79}.
	\end{proof}
\end{Lemma}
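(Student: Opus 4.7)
The statement is an energy estimate that is uniform in $\varepsilon$, so my plan is a standard testing argument with the solution as its own test function. In both the setting of Lemma~\ref{Lem:WSExist} and that of Lemma~\ref{Lem:WSExistMDTw}, $\ueps$ belongs to the appropriate function space (either $W^1_0L^{M^\varepsilon}(\Omega;\eR^N)$ or $V^{M^\varepsilon}_0$), hence it is an admissible test function in \eqref{WFEpsPr}, respectively \eqref{WFEpsPrMDTw}. Inserting $\bphi=\ueps$ yields the identity
\begin{equation*}
\int_\Omega \Aeps(x)\cdot\nabla\ueps(x)\dx=\int_\Omega \boldF(x)\cdot\nabla\ueps(x)\dx,
\end{equation*}
which is the sole place where the equation is used; everything else will be algebra with $\mathcal{N}$--functions.

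Next I would estimate the two sides. On the left, the coercivity hypothesis \ref{ATh} applied to $\boldA$ with $y=x/\varepsilon$ and $\boldxi=\nabla\ueps(x)$ produces the lower bound
\begin{equation*}
\Aeps(x)\cdot\nabla\ueps(x)\geq c\bigl(M^\varepsilon(x/\varepsilon,\nabla\ueps(x))+(M^\varepsilon)^*(x/\varepsilon,\Aeps(x))\bigr).
\end{equation*}
On the right, the Young inequality $\boldF\cdot\nabla\ueps\leq (M^\varepsilon)^*(x/\varepsilon,\tfrac{2}{c}\boldF)+\tfrac{c}{2}M^\varepsilon(x/\varepsilon,\nabla\ueps)$ allows me to absorb half of the coercivity term from the left into the right, and I am left with
\begin{equation*}
\frac{c}{2}\int_\Omega M^\varepsilon(x/\varepsilon,\nabla\ueps)\dx+c\int_\Omega (M^\varepsilon)^*(x/\varepsilon,\Aeps)\dx\leq \int_\Omega (M^\varepsilon)^*(x/\varepsilon,\tfrac{2}{c}\boldF)\dx.
\end{equation*}
It is at this point that assumption \eqref{Assumption:F} and \ref{MTh} come in: since $\boldF\in L^\infty(\Omega;\eR^{d\times N})$ and $M^*(y,\boldxi)\leq m_1^*(|\boldxi|)$ uniformly in $y\in Y$ (which is the dual of the upper bound $m_1\leq M$ from \ref{MTh}), the right--hand side is bounded by $|\Omega|\,m_1^*(\tfrac{2}{c}\|\boldF\|_\infty)<\infty$ independently of $\varepsilon$. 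This establishes \eqref{AprEst}.

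Finally, the asserted bounds in the Lebesgue spaces with fixed $\mathcal{N}$--functions $m_1$ and $m_2^*$ follow from \eqref{AprEst} by pointwise comparison: \ref{MTh} directly gives $m_1(|\nabla\ueps|)\leq M^\varepsilon(x/\varepsilon,\nabla\ueps)$, and the dual inequality $m_2^*(|\Aeps|)\leq (M^\varepsilon)^*(x/\varepsilon,\Aeps)$ holds since $M(y,\cdot)\leq m_2(|\cdot|)$ implies $M^*(y,\cdot)\geq m_2^*(|\cdot|)$ for the Fenchel conjugates. Uniform boundedness of $\{\ueps\}$ in $V^{m_1}_0$ (as opposed to mere boundedness of $\{\nabla\ueps\}$ in $L^{m_1}(\Omega;\eR^{d\times N})$) then follows from the standard Poincaré inequality in $W^{1,1}_0(\Omega;\eR^N)$ together with the embedding $L^{m_1}\hookrightarrow L^1$ on the bounded domain $\Omega$. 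There is no real obstacle here: the argument is a textbook monotone--operator energy estimate, and the only point worth double--checking is that $\ueps$ is genuinely admissible as a test function in both variants of the existence lemma, which is guaranteed by construction of the respective weak--solution spaces.
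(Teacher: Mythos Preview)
Your argument is correct and coincides with the paper's own proof: test the weak formulation with $\ueps$, apply \ref{ATh} on the left and the Young inequality (with the convexity of $M$ and $c\le 1$) on the right to absorb $\tfrac{c}{2}\int M^\varepsilon(x/\varepsilon,\nabla\ueps)$, then bound the remaining $(M^\varepsilon)^*(x/\varepsilon,\tfrac{2}{c}\boldF)$ via $M^*\le m_1^*$ and \eqref{Assumption:F}, and conclude with $m_1\le M$, $m_2^*\le M^*$ and Poincar\'e. The only cosmetic difference is that you spell out the dualization $m_1\le M\le m_2\Rightarrow m_2^*\le M^*\le m_1^*$ explicitly, which the paper leaves implicit.
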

Based on the previous lemma we obtain the following convergence results.
\begin{Lemma}
Let the assumptions of Lemmas \ref{Lem:WSExist} and \ref{Lem:WSExistMDTw} be satisfied. Let $\{\varepsilon_j\}_{j=1}^\infty$ be such that $\varepsilon_j\rightarrow 0$ as $j\rightarrow\infty$ and $\{\boldu^{\varepsilon_j}\}_{j=1}^\infty$ be a sequence of weak solutions of \eqref{StudPr}.
	Then there is a subsequence $\{\varepsilon_{j_k}\}_{k=1}^\infty$, functions $\boldu\in V_0^{m_1}$, $\boldU\in L^{m_1}(\Omega\times Y;\eR^{d\times N})$, $\bar\boldA\in L^{m_2^*}(\Omega;\eR^{d\times N})$ and $\boldA^0\in L^{m_2^*}(\Omega\times Y;\eR^{d\times N})$ such that as $k\rightarrow\infty$ we have the following weak convergence result (the sequences are denoted by $k$ and not by $\varepsilon_{j_k}$ for simplicity)
	\begin{equation}\label{WSConv}
		\begin{alignedat}{2}
			\boldu^k&\WSCon \boldu&&\text{ in }L^{m_1}(\Omega;\eR^N),\\
			\nabla\boldu^k&\WSCon \nabla\boldu&&\text{ in }L^{m_1}(\Omega;\eR^{d\times N}),\\
			\boldA^{k}&\WSCon\bar\boldA&&\text{ in }L^{m_2^*}(\Omega;\eR^{d\times N})
		\end{alignedat}
	\end{equation}
	and
	\begin{equation}\label{WTSC}
		\begin{alignedat}{2}
		 (\nabla \uk)\circ S_k&\WSCon\nabla \boldu+\boldU &&\text{ in }L^{M_y}(\Omega\times Y;\eR^{d\times N}),\\
		 \boldAk\circ S_k&\WSCon\boldA^0 &&\text{ in }L^{M_y^*}(\Omega\times Y;\eR^{d\times N}).
		\end{alignedat}
	\end{equation}
	The limit functions $\boldA$ and $\boldA^0$ are related via
	\begin{equation}\label{AZAv}
	\bar\boldA=\int_Y\boldA^0\dy.
	\end{equation}
	Moreover, assume that either $N=1$ or the embedding $W^{1}_0 L^{m_1}\hookrightarrow L^{m_2}$ holds. If $M^*$ satisfies $\Delta_2$--condition then for a.a. $x\in\Omega$
	\begin{align}
		\boldU&(x,\cdot)\in G^{\bot\bot},\label{BUReg}\\
		\boldA^0&(x,\cdot)\in G^\bot,\label{AzDual}\\
		\boldu&\in V^f_0\label{LimRegU},\\
		\bar\boldA&\in L^{f^*}(\Omega;\eR^{d\times N}).\label{bAReg}
	\end{align}
	and if $M$ satisfies $\Delta_2$--condition then for a.a. $x\in\Omega$
		\begin{align}
		\boldU&(x,\cdot)\in D^{\bot},\label{BURegT}\\
		\boldA^0&(x,\cdot)\in D^{\bot\bot},\label{AzDualT}\\
		\boldu&\in V^{h^{**}}_0,\label{LimRegUT}\\
		\bar\boldA&\in L^{h^*}(\Omega;\eR^{d\times N}).\label{bARegT}
	\end{align}
	The function $\bar\boldA$ satisfies
	\begin{equation}\label{WFLim}
		\int_\Omega\bar\boldA\cdot\nabla\bphi=\int_\Omega \boldF\cdot\nabla\bphi
	\end{equation}
	for all $\bphi\in C^\infty_c\left(\Omega;\eR^N\right)$.
	\begin{proof}
	Let us mention that in the proof we shall make several selections of subsequences not necessarily stressing this fact.	The convergence results in \eqref{WSConv} follow directly from the uniform estimates from Lemma~\ref{Lem:AprBound}. As a consequence of \eqref{WSConv}$_{1,2}$ and Lemma~\ref{Lem:Facts2S}~\ref{F2SSi} we obtain the existence of a function $\boldU\in L^{m_1}(\Omega\times Y;\eR^{d\times N})$ and a subsequence $\{\varepsilon_k\}_{k=1}^\infty$ such that
	\begin{equation}\label{GrUkTSCon}
		\begin{alignedat}{2}
			\boldu^k&\WTSSCon\boldu&&\text{ in }L^{m_1}(\Omega\times Y;\eR^N),\\
			\nabla\boldu^k&\WTSSCon \nabla\boldu+\boldU &&\text{ in }L^{m_1}(\Omega\times Y;\eR^{d\times N})
		\end{alignedat}
	\end{equation}
	with $\boldU$ satisfying
	\begin{equation}\label{OrtProp}
		\int_Y\boldU(x,y)\cdot\bpsi(y)\dy=0\qquad \forall\bpsi\in C^\infty_{per}(Y;\eR^{d\times N}).
	\end{equation}
	By Lemma~\ref{Lem:Facts2S}~\ref{F2SFi} and Lemma~\ref{Lem:AprBound} we infer the existence of function $\boldA^0\in L^{m_2^*}(\Omega\times Y;\eR^{d\times N})$ such that
	\begin{equation}\label{AkTsCon}
		\boldA^k\WTSSCon \boldA^0\text{ in }L^{m_2^*}(\Omega\times Y;\eR^{d\times N}).
	\end{equation}
	Using convergence results \eqref{GrUkTSCon} and \eqref{AkTsCon}, the weak lower semicontinuity~\ref{F2SSev} from Lemma~\ref{Lem:Facts2S} we infer
	\begin{equation}\label{LimIneq}
		\int_\Omega\int_Y M(y,\nabla\boldu+\boldU)+M^*(y,\boldA^0)\dy\dx\leq\liminf_{k\rightarrow \infty} \int_\Omega\int_Y M\left(\frac{x}{\varepsilon_k},\nabla\boldu^k\right)+M^*\left(\frac{x}{\varepsilon_k},\boldA^k\right)\dy\dx<\infty.
	\end{equation}
	Next, by Lemma~\ref{Lem:Decomp} and Lemma~\ref{Lem:AprBound} we get
	\begin{equation*}
		\begin{split}
		\sup_{\varepsilon>0}\int_{\Omega}\int_Y M(y,\nabla \boldu^{\varepsilon}(S_\varepsilon(x,y)))\dy\dx&=\sup_{\varepsilon>0}\int_\Omega M\left(\frac{x}{\varepsilon},\nabla \boldu^\varepsilon(x)\right)\dx<\infty,\\
		\sup_{\varepsilon>0}\int_{\Omega}\int_Y M^*(y,\boldA(y,\nabla \boldu^\varepsilon(S_\varepsilon(x,y)))\dy\dx&=\sup_{\varepsilon>0}\int_\Omega M^*\left(\frac{x}{\varepsilon},\boldA\left(\frac{x}{\varepsilon},\nabla\boldu^\varepsilon(x)\right)\right)\dx<\infty.
		\end{split}
	\end{equation*}
	Accordingly, we obtain the existence of functions $\boldV\in L^{M_y}(\Omega\times Y;\eR^{d\times N})$ and $\tilde\boldA\in L^{M^*_y}(\Omega\times Y;\eR^{d\times Y})$ and a sequence $\varepsilon_k\rightarrow 0$ as $k\rightarrow \infty$ such that
	\begin{equation}\label{Conv2}
	\begin{alignedat}{2}
		\nabla\boldu^{k}\circ S_k&\WSCon \boldV&&\text{ in }L^{M_y}(\Omega\times Y;\eR^{d\times N}),\\
		\boldA^k\circ S_k&\WSCon\tilde\boldA&&\text{ in }L^{M^*_y}(\Omega\times Y;\eR^{d\times N})
	\end{alignedat}
	\end{equation}
	as $k\rightarrow \infty$. Hence in view of \eqref{LimIneq} we infer using \eqref{GrUkTSCon} and \eqref{AkTsCon} that $\boldV=\nabla\boldu+\boldU$, $\tilde\boldA=\boldA^0$, i.e., we have concluded \eqref{WTSC}.
	By Lemma~\ref{Lem:Facts2S}~\ref{F2SS} we get \eqref{AZAv}.
	In order to show \eqref{BUReg}--\eqref{bAReg} we distinguish separately the cases $N=1$ and the accomplishment of the embedding $W^{1}_0L^{m_1}\hookrightarrow L^{m_2}$. First, we deal with the case $N=1$. We recall that the truncation operator $T_h$ was introduced in Lemma~\ref{Lem:ModConvGrTrunc} in the appendix. Next, we realize that analogously to deriving the convergence \eqref{WTSC} the following convergences can be derived for any $h>0$
	\begin{equation}\label{GrTruncConv}
		\begin{alignedat}{2}
			T_h u^k\circ S_k&\WSCon T_h u&&\text{ in }L^\infty(\Omega\times Y),\\
		\nabla T_hu^k\circ S_k&\WSCon \nabla T_h u+\boldU^h&&\text{ in }L^{M_y}(\Omega\times Y;\eR^d)
		\end{alignedat}
	\end{equation}
	as $k\rightarrow\infty$. Note that the fact that $\{T_h u^k\}$ contains subsequence, which we will not relabel, converging weakly$^*$ in $L^\infty(\Omega\times Y)$ follows from the uniform estimate $\|T_h u^k\|_{L^\infty}\leq h$. The limit function is identified with the Lebesgue dominated convergence theorem from \eqref{WSConv}$_{1,2}$ and the compact embedding of $W^{1,1}(\Omega)$ to $L^1(\Omega)$.	Next, choose an arbitrary, but fixed $\varphi\in C^\infty_c(\Omega)$, $\boldV\in G^\bot$ and without loss of generality assume that $\int_Y \boldV=0$. Then, as $\div \boldV=0$ a.e. in $Y$ we obtain for an arbitrary but fixed $h>0$ using the integration by parts and Lemma~\ref{Lem:Decomp}
	\begin{align*}
	&\int_\Omega\int_Y \nabla T_hu^k(S_k(x,y))\cdot\boldV(y)\varphi(S_k(x,y))\dy\dx=
		\int_\Omega \nabla T_hu^k(x)\cdot\boldV(\frac{x}{\varepsilon_k})\varphi(x)\dx\\&=-\int_\Omega T_h u^k(x) \boldV(\frac{x}{\varepsilon})\cdot\nabla\varphi(x)\dx=-\int_\Omega\int_Y T_hu^k(S_k(x,y))\cdot\boldV(y)\cdot\nabla\varphi(S_k(x,y))\dy\dx.
	\end{align*}
	Performing the limit passage $k\rightarrow\infty$ in the latter identity with the help of \eqref{GrTruncConv} yields for an arbitrary but fixed $h>0$
	\begin{equation*}
		\begin{split}
		&\int_\Omega\int_Y \boldU^h(x,y)\cdot\boldV(y)\varphi(x)\dy\dx=\int_\Omega\int_Y (\nabla T_h u(x)+\boldU^h(x,y))\cdot\boldV(y)\varphi(x)\dy\dx\\&=-\left(\int_Y\boldV(y)\dy\right)\cdot\int_\Omega T_h u(x)\nabla\varphi(x)\dx=0,
		\end{split}
	\end{equation*}
	which means that $\boldU^h\in G^{\bot\bot}$ for any $h>0$. Denote $\boldW^h=\nabla T_h u+\boldU^h$ and $\boldW=\nabla u+\boldU$. Employing Lemma~\ref{Lem:Facts2S}~\ref{F2SSev} we infer
	\begin{equation*}
		\int_\Omega\int_Y M(y,\boldW^h(x,y))\dy\dx\leq \liminf_{k\to\infty}\int_\Omega M\left(\frac{x}{\varepsilon},\nabla T_h u^k(x)\right)\dx\leq \liminf_{k\to\infty}\int_\Omega M\left(\frac{x}{\varepsilon},\nabla u^k(x)\right)\dx<\infty,
	\end{equation*}
	which implies for $\boldW^j=\boldW^{h_j}$, where $h_j\to 0$ as $j\to\infty$, that $\boldW^j\WSCon \tilde\boldW$ in $L^{M_y}(\Omega\times Y;\eR^{d})$. On the other hand we obtain due to the weak lower semicontinuity of $L^1$--norm that
	\begin{equation*}
		\begin{split}
		\int_\Omega\int_Y |\boldW(x,y)-\boldW^j(x,y)|\dy\dx&\leq \liminf_{k\to\infty}\int_\Omega\int_Y |\nabla u^k(S_k(x,y))-\nabla T_{h_j}u^k(S_k(x,y))|\dy\dx\\
		&=\liminf_{k\to\infty}\int_{\{|u^k(S_k(x,y))|>h_j\}}|\nabla u^k(S_k(x,y,))|\dy\dx\\
		&\leq c\mu(|\{|u^k(S_k(x,y))|>h_j\}|),
		\end{split}
	\end{equation*}
	where $\mu$ is continuous at $0$ and $\mu(0)=0$. Thus we have $\boldW^j\to\boldW$ in $L^1(\Omega\times Y;\eR^d)$ due to the uniform bound on $\{u^k\circ S_k\}_{k=1}^\infty$ following from Lemma~\ref{Lem:AprBound}. Consequently, $\tilde\boldW=\boldW=\nabla u+\boldU$ a.e. in $\Omega\times Y$ and as this fact along with $\nabla T_{h_j}u\ModConvM\nabla u$ in $L^M(\Omega)$ implies $\boldU^j\ModConv{M^*}\boldU$ in $L^{M^*}(\Omega\times Y;\eR^d)$ and Lemma~\ref{Lem:ProdConv}, we obtain
	\begin{equation*}
		\int_\Omega\int_Y \boldU(x,y)\cdot\boldV(y)\varphi(x)\dy\dx=\lim_{j\to\infty}\int_\Omega\int_Y \boldU^j(x,y)\cdot\boldV(y)\varphi(x)\dy\dx=0,
	\end{equation*}
	from which \eqref{BUReg} follows.

	We obtain immediately for an arbitrary $\nabla\boldv=\boldV\in G$ and $\varphi\in C^\infty_c(\Omega)$ using Lemma~\ref{Lem:Decomp}, Lemma~\ref{Lem:ModConvGrTrunc}, weak formulation \eqref{WFEpsPr} and convergence \eqref{Conv2}$_2$
	\begin{equation*}
		\begin{split}
		&\int_\Omega\int_Y \boldA^0(x,y)\cdot\nabla\boldv(y)\varphi(x)\dy\dx\\
		&=\lim_{l\to\infty}\int_\Omega\int_Y \boldA^0(x,y)\cdot\nabla T_l\boldv(y)\varphi(x)\dy\dx=\lim_{l\to\infty}\lim_{k\to\infty}\int_\Omega\int_Y \boldA^k(S_k(x,y))\cdot\nabla T_l\boldv(y)\varphi(S_k(x,y))\dy\dx\\
		&=\lim_{l\to\infty}\lim_{k\to\infty}\varepsilon_k\int_\Omega \boldA^k(x)\cdot\nabla_x\left(T_l\boldv\left(\frac{x}{\varepsilon_k}\right)\varphi(x)\right)\dx-\varepsilon_k\int_\Omega \boldA^k(x)\cdot T_l\boldv\left(\frac{x}{\varepsilon_k}\right)\otimes\nabla\varphi(x)\dx\\&=\lim_{l\to\infty}\lim_{k\to\infty}\varepsilon_k\int_\Omega\boldF(x)\cdot\nabla_x\left(T_l\boldv\left(\frac{x}{\varepsilon_k}\right)\varphi(x)\right)\dx-\varepsilon_k\int_\Omega \boldA^k(x)\cdot T_l\boldv\left(\frac{x}{\varepsilon_k}\right)\otimes\nabla\varphi(x)\dx\\&=\lim_{l\to\infty}\lim_{k\to\infty}\int_\Omega\int_Y\boldF(S_k(x,y))\cdot\nabla_y\left(T_l\boldv(y)\varphi(S_k(x,y))\right)\dy\dx\\
		&\qquad-\varepsilon_k\int_\Omega\int_Y \boldA^k(S_k(x,y))\cdot T_l\boldv(y)\otimes\nabla\varphi(S_k(x,y))\dy\dx\\
		&=\lim_{l\to\infty}\lim_{k\to\infty}\int_\Omega\int_Y\boldF(S_k(x,y))\cdot\nabla_yT_l\boldv(y)\varphi(S_k(x,y))\dy\dx\\
		&\qquad+\varepsilon_k\int_\Omega\int_Y\boldF(S_k(x,y))\cdot T_l\boldv(y)\otimes\nabla\varphi(S_k(x,y))\dy\dx\\
		&\qquad-\varepsilon_k\int_\Omega\int_Y \boldA^k(S_k(x,y))\cdot T_l\boldv(y)\otimes\nabla\varphi(S_k(x,y))\dy\dx\\
		&=\lim_{l\to\infty}\int_\Omega\boldF(x)\varphi(x)\dx\cdot\int_Y\nabla_yT_l\boldv(y)\dy=0
		\end{split}
	\end{equation*}
	where we also used the fact that $T_l\boldv$ is $Y$--periodic. Thus we have \eqref{AzDual}.
	Now, we consider that $N>1$ and $W^1_0L^{m_1}\hookrightarrow L^{m_2}$ is available.	Let us choose arbitrary but fixed $\varphi\in C^\infty_c(\Omega)$, $\boldV\in G^\bot$ and without loss of generality assume that $\int_Y \boldV=0$. Then as $\div \boldV=0$ a.e. in $Y$ we obtain, using the integration by parts and Lemma~\ref{Lem:Decomp}
	\begin{align*}
		&\int_\Omega\int_Y \nabla\boldu^k(S_k(x,y))\cdot\boldV(y)\varphi(S_k(x,y))\dy\dx=\int_\Omega \nabla\boldu^k(x)\cdot\boldV\left(\frac{x}{\varepsilon_k}\right)\varphi(x)\dx\\&=-\int_\Omega \boldV\left(\frac{x}{\varepsilon_k}\right)\cdot\boldu^k(x)\otimes\nabla\varphi(x)\dx=-\int_\Omega\int_Y \boldV\left(y\right)\cdot\boldu^k(S_k(x,y))\otimes\nabla\varphi(S_k(x,y))\dx.
	\end{align*}
	Let us notice that the assumed embedding ensures that the integral on the right hand side is meaningful. Employing convergences \eqref{Conv2}$_1$ on the left hand side and \eqref{GrUkTSCon}$_1$ on the right hand side of the latter identity we arrive at
	\begin{equation*}
		\begin{split}
		&\int_\Omega\int_Y \boldU(x,y)\cdot\boldV(y)\varphi(x)\dy\dx=\int_\Omega\int_Y (\nabla\boldu(x)+\boldU(x,y))\cdot\boldV(y)\varphi(x)\dy\dx\\&=\int_Y\boldV(y)\dy \cdot \int_\Omega\boldu(x)\otimes\nabla\varphi(x)\dy\dx=0,
		\end{split}
	\end{equation*}	
	which concludes \eqref{BUReg}.
	
	In order to show \eqref{AzDual} we choose an arbitrary but fixed $\varphi\in C^\infty_c(\Omega)$, and $\nabla\boldv\in G$ and obtain
	\begin{equation*}
		\begin{split}
			\int_\Omega\boldA^k(x)\cdot\nabla\boldv\left(\frac{x}{\varepsilon_k}\right)\varphi(x)\dx=&\varepsilon_k\int_\Omega \boldA^k(x)\cdot\nabla_x\boldv\left(\frac{x}{\varepsilon_k}\right)\varphi(x)\dx=\varepsilon_k\int_\Omega \boldA^k(x)\cdot\nabla_y\left(\boldv\left(\frac{x}{\varepsilon_k}\right)\varphi(x)\right)\\&-\varepsilon_k\int_\Omega \boldA^k(x)\cdot\boldv\left(\frac{x}{\varepsilon_k}\right)\otimes\nabla\varphi(x)\dx.
		\end{split}
	\end{equation*}
Notice that the second integral on the right hand side is well defined due to the embedding $W^{1}_0L^{m_1}\hookrightarrow L^{m_2}$. Using Lemma~\ref{Lem:Decomp} and weak formulation \eqref{WFEpsPr} we infer
\begin{equation}\label{OrthAzPrep}
		\begin{split}
		&\int_\Omega\int_Y\boldA^{k}(S_k(x,y))\cdot\nabla\boldv(y)\varphi(S_k(x,y))\dy\dx=
			\int_\Omega\boldA^{k}(x)\cdot\nabla\boldv\left(\frac{x}{\varepsilon_k}\right)\varphi(x)\dx\\
			&=\varepsilon_k\int_\Omega\boldA^{k}(x)\cdot\nabla_x\left(\boldv\left(\frac{x}{\varepsilon_k}\right)\varphi(x)\right)\dx-\varepsilon_k\int_\Omega\boldA^k(x)\cdot\boldv\left(\frac{x}{\varepsilon_k}\right)\otimes\nabla\varphi(x)\dx\\
			&=\varepsilon_k\int_\Omega \boldF(x)\cdot\nabla_x\left(\boldv\left(\frac{x}{\varepsilon_k}\right)\varphi(x)\right)-\varepsilon_k\int_\Omega\int_Y \boldA^k(S_k(x,y))\cdot\boldv(y)\otimes\nabla\varphi(S_k(x,y))\dy\dx\\
			&=\int_\Omega\int_Y \boldF(S_k(x,y))\cdot\nabla_y\boldv(y)\varphi(S_k(x,y))\dy\dx+\varepsilon_k\int_\Omega\int_Y\boldF(S_k(x,y))\cdot\boldv(y)\otimes\nabla\varphi(S_k(x,y))\dy\dx\\
&\qquad-\varepsilon_k\int_\Omega\int_Y \boldA^k(S_k(x,y))\cdot\boldv(y)\otimes\nabla\varphi(S_k(x,y))\dy\dx\\
&=:I^{k,1}+I^{k,2}+I^{k,3}.
		\end{split}
	\end{equation}
Performing the limit passage $k\rightarrow\infty$ in the latter identity we realize that the terms on the right hand side vanish. Indeed, we have by Lemma~\ref{Lem:AprBound}, the embedding $W^1_0L^{m_1}\hookrightarrow L^{m_2}$ and \ref{MTh} that
\begin{align*}
&\lim_{k\to\infty}I^{k,1}= \int_\Omega \boldF(x)\varphi(x)\dx\cdot\int_Y\nabla\boldv(y)\dy=0,\\
&\lim_{k\to\infty}I^{k,2}\leq c\limsup_{k\to\infty}\varepsilon_k\|\boldF\|_{L^\infty(\Omega)}\|\nabla\boldv\|_{L^M(Y)}\|\nabla\varphi\|_{L^\infty(\Omega)}=0,\\
&\lim_{k\to\infty}I^{k,3}\leq \limsup_{k\to\infty}\varepsilon_k\|\boldA^k\|_{L^{m_2^*}(\Omega)}\|\nabla\boldv\|_{L^M(Y)}\|\nabla\varphi\|_{L^\infty(\Omega)}=0.
\end{align*}
Hence employing also \eqref{WTSC}$_2$ in \eqref{OrthAzPrep} we get
\begin{equation*}
	\int_\Omega\int_Y\boldA^0(x,y)\cdot\nabla\boldv(y)\varphi(x)\dy\dx=0,
\end{equation*}
which implies \eqref{AzDual}.
To conclude \eqref{LimRegU} we employ Lemma~\ref{Lem:FEqExpr} and \eqref{LimIneq}. Using the expression for $f^*$, \eqref{AzDual} and \eqref{LimIneq} we obtain \eqref{bAReg}.

Let us deal with the case when $M$ satisfies $\Delta_2$--condition. We note that in order to prove \eqref{BURegT} we fix $\boldV\in D$, $\varphi\in C^\infty_c(\Omega)$ and proceed analogously to the proof of \eqref{BUReg}. Taking into account \eqref{DBotChar} we fix $\nabla\boldv\in D^\bot$, $\varphi\in C^\infty_c(\Omega)$ and repeating the proof of \eqref{AzDual} we obtain \eqref{AzDualT}. To conclude \eqref{LimRegUT} we employ Lemma~\ref{Lem:HStEqExpr} and \eqref{LimIneq}. Using the expression for $h^*$, \eqref{AzDual} and \eqref{LimIneq} we obtain \eqref{bAReg}.

The identity \eqref{WFLim} is obtained by performing the limit passage $k\rightarrow \infty$ in \eqref{WFEpsPr} with $\varepsilon=\varepsilon_{j_k}$ for smooth compactly supported test functions using the convergence \eqref{WSConv}$_2$.

	\end{proof}
\end{Lemma}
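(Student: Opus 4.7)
The plan is to extract convergent subsequences from the uniformly bounded families produced by Lemma~\ref{Lem:AprBound}, identify their two-scale limits on $\Omega\times Y$, and only at the very end derive the orthogonality constraints that single out the correct element of $G^{\bot\bot}$ or $D^\bot$.

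Using the estimates $\{\ueps\}\subset V_0^{m_1}$ and $\{\Aeps\}\subset L^{m_2^*}(\Omega;\eR^{d\times N})$ the weak$^*$ compactness of these spaces gives \eqref{WSConv} along a first subsequence. For the two-scale convergence \eqref{WTSC}, I would first apply Lemma~\ref{Lem:Facts2S}\ref{F2SSi} in $L^{m_1}$ to obtain $\nabla\ueps\WTSSCon\nabla\boldu+\boldU$ with the solenoidal orthogonality of $\boldU$ for a.a. $x$, and then upgrade the target space to $L^{M_y}(\Omega\times Y)$: by Lemma~\ref{Lem:Decomp} the $\varepsilon$-uniform bound on $\int_\Omega M(x/\varepsilon_k,\nabla\ueps)\dx$ rewrites as a uniform bound on $\int_{\Omega\times Y} M(y,\nabla\ueps(S_{\varepsilon_k}(x,y)))\dy\dx$, so Lemma~\ref{Lem:Facts2S}\ref{F2SFi} gives a weak$^*$ limit in $L^{M_y}$; consistency with the $L^{m_1}$ two-scale limit pins it down as $\nabla\boldu+\boldU$. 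The analogous argument for $\boldA^k\circ S_k$ produces $\boldA^0\in L^{M_y^*}(\Omega\times Y;\eR^{d\times N})$. The relation \eqref{AZAv} is then Lemma~\ref{Lem:Facts2S}\ref{F2SS}, while Lemma~\ref{Lem:Facts2S}\ref{F2SSev} applied to $M$ and $M^*$ yields the finite-integral bound \eqref{LimIneq}.

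The main obstacle is proving the orthogonality statements \eqref{BUReg}--\eqref{AzDual} (and their $D$-counterparts). For $\boldU(x,\cdot)\in G^{\bot\bot}$ I would pick $\boldV\in G^\bot$ with $\int_Y\boldV=0$ and $\varphi\in C^\infty_c(\Omega)$, and integrate by parts in the identity
\begin{equation*}
\int_\Omega \nabla\ueps(x)\cdot\boldV(x/\varepsilon_k)\,\varphi(x)\dx
=-\int_\Omega \ueps(x)\otimes\nabla\varphi(x):\boldV(x/\varepsilon_k)\dx,
\end{equation*}
using $\div_y\boldV=0$. Converting both sides via Lemma~\ref{Lem:Decomp} and passing to the limit, the mean-zero condition $\int_Y\boldV=0$ kills the right-hand side, leaving $\int_\Omega\int_Y \boldU(x,y)\cdot\boldV(y)\varphi(x)\dy\dx=0$. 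The subtlety is that the right-hand side only makes literal sense if the pairing $\ueps\otimes\nabla\varphi:\boldV(\cdot/\varepsilon_k)$ is integrable; this is immediate when the embedding $W^1_0L^{m_1}\hookrightarrow L^{m_2}$ is assumed (so $\ueps\in L^{m_2}$ pairs with $\boldV\in L^{M_y^*}\subset L^{m_2^*}$), and when $N=1$ I would bypass the embedding by first truncating $\ueps$ by $T_h\ueps\in L^\infty$, running the same computation, and then passing $h\to\infty$ using the modular convergence of truncated gradients (Lemma~\ref{Lem:ModConvGrTrunc}) together with the $L^1$-convergence of $\nabla T_h\ueps$ to $\nabla\ueps$.

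For $\boldA^0(x,\cdot)\in G^\bot$ I would insert the test function $\varepsilon_k\,\boldv(x/\varepsilon_k)\varphi(x)$ with $\boldv\in C^\infty_{per}(Y;\eR^N)$ into \eqref{WFEpsPr}; expanding the gradient via the product rule, the $\varepsilon_k$ prefactor extinguishes every term except the one containing $\nabla_y\boldv$, and after a Lemma~\ref{Lem:Decomp} rewrite the weak two-scale convergence of $\boldA^k\circ S_k$ produces the orthogonality (when $N=1$ one truncates $\boldv$ and passes the truncation level to infinity at the end). The two $\Delta_2$ variants \eqref{BURegT}, \eqref{AzDualT} are identical up to replacing $G$ by $D$ and using \eqref{DBotChar}. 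With the orthogonalities in place, the regularity statements \eqref{LimRegU}, \eqref{bAReg} follow by combining the alternative expressions for $f$ and $f^*$ in Lemma~\ref{Lem:FEqExpr} with the finite-integral bound \eqref{LimIneq}; the analogous $h^*,h^{**}$ statements use Lemma~\ref{Lem:HStEqExpr}. Finally, \eqref{WFLim} is obtained by passing $k\to\infty$ directly in \eqref{WFEpsPr} against $\bphi\in C^\infty_c(\Omega;\eR^N)$, using \eqref{WSConv}$_3$.
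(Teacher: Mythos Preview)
Your proposal is correct and follows essentially the same approach as the paper: extract weak$^*$ limits from the a priori bounds, upgrade the two-scale limits to $L^{M_y}$ via Lemma~\ref{Lem:Decomp}, and establish the orthogonality constraints by integration by parts against periodic divergence-free test fields (with truncation when $N=1$), then read off the regularity from Lemmas~\ref{Lem:FEqExpr} and~\ref{Lem:HStEqExpr}. One point worth sharpening: in the $N=1$ case the paper does not pass $h\to\infty$ at the level of $\nabla T_h u^{\varepsilon_k}$ directly, but first lets $k\to\infty$ at fixed $h$ to obtain a two-scale corrector $\boldU^h\in G^{\bot\bot}$, and only then sends $h\to\infty$, controlling $\boldU^h\to\boldU$ in $L^1(\Omega\times Y)$ via a uniform-integrability argument on the superlevel sets $\{|u^k\circ S_k|>h\}$; your sketch has the right idea but the order of limits and the mechanism for the final passage deserve a line of care.
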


\def\bpsi{\boldsymbol{\psi}}
\def\boldAEpsk{\boldA_{\varepsilon_k}}

The rest of this section is devoted to the identification of $\bar\boldA$ in \eqref{WFLim}. The proof is divided into five steps. \\
\textbf{Step 1}: We show the following identity
\begin{equation}\label{LimIntProd}
	\lim_{k\rightarrow\infty}\int_\Omega \boldA^k\cdot\nabla \uk\dx=\int_\Omega \bar\boldA\cdot\nabla \boldu\dx.
\end{equation}
Using identity \eqref{WFEpsPr} for $\varepsilon=\varepsilon_k$ with $\bphi=\boldu^k$ we get with the help of convergence \eqref{WSConv}$_2$
\begin{equation}\label{PartIdent}
	\lim_{k\rightarrow\infty}\int_\Omega \boldA^k\cdot\nabla \uk\dx=\lim_{k\to\infty}\int_\Omega \boldF\cdot\nabla \boldu^k\dx=\int_\Omega \boldF\cdot\nabla \boldu\dx.
\end{equation}
First, assume that $M^*$ satisfies $\Delta_2$--condition and $N=1$. In the case of $\Omega$ being a bounded Lipschitz domain, there exist a finite family of open sets $\{\Omega_i\}_{i=1}^K$ and a finite family of balls $\{B_i(x_i,r_i)\}_{i=1}^K$ such that each $\Omega_i$ is star--shaped with respect to the ball $B_i(x_i,r_i)$ and $\Omega=\bigcup_{i=1}^K\Omega_i$. Let $\{\theta_i\}_{i=1}^K$ be a partition of unity subordinated to $\{\Omega_i\}$, i.e., $\theta_i\in C^\infty_c(\Omega_i)$, $0\leq\theta_i\leq 1$ in $\Omega$ and $\sum_{i=1}^K\theta_i=1$ in $\Omega$. We consider for each $j\in\eN$ the truncation $T_j u$ and its decomposition in the form
\begin{equation*}
	T_ju(x)=\sum_{i=1}^KT_ju(x)\theta_i(x),\ x\in\Omega.
\end{equation*}
As $\nabla(T_ju\theta_i)=\nabla T_ju\theta_i+T_ju\nabla\theta_i\in L^f(\Omega_i;\eR^d)$ and $\supp u\theta_i\subset\Omega_i$ for each $i,j\in\eN$, we can adopt with minor modifications the procedure of constructing an approximating sequence applied in the proof of \cite[Lemma 3.1]{GSG11} on a function on a time--space domain to find $\{u^{n,j}_i\}\subset C^\infty_c(\Omega)$ such that $\nabla u^{n,j}_i\ModConv{f}\nabla (T_j u\theta_i)$ for each $i=1,\ldots, K$. Then we define $u^{n,j}=\sum_{i=1}^K u^{n,j}_i$ and obtain from \eqref{WFLim} using Lemmas~\ref{Lem:ProdConv} and~\ref{Lem:ModConvGrTrunc}
\begin{equation*}
\int_\Omega \bar\boldA\cdot\nabla u\dx=\lim_{j\to\infty}\lim_{n\to\infty}\int_\Omega \bar\boldA\cdot\nabla u^{n,j}\dx=\lim_{j\to\infty}\lim_{n\to\infty}\int_\Omega \boldF\cdot\nabla  u^{n,j}\dx=\int_\Omega \boldF\cdot\nabla u\dx.
\end{equation*}
Hence \eqref{LimIntProd} follows from \eqref{PartIdent} and the latter identity.

Next, we assume that $N\geq 1$ and the embedding $W^{1}_0L^{m_1}\hookrightarrow L^{m_2}$ holds. Let us consider the following decomposition of $\boldu$
\begin{equation*}
	\boldu(x)=\sum_{i=1}^K\boldu(x)\theta_i(x),\ x\in\Omega
\end{equation*}
where $\{\theta_i\}_{i=1}^K$ is the partition of unity introduced above. Obviously, thanks to the assumed embedding and~\ref{MTh} $\nabla(\boldu\theta_i)\in L^f(\Omega_i;\eR^{d\times N})$ and $\supp \boldu\theta_i\subset\Omega_i$ for each $i=1,\ldots,K$. Modifying again the procedure from the proof of \cite[Lemma 3.1]{GSG11} we find sequences $\{\boldu^n_i\}_{n=1}^\infty\subset C^\infty_c(\Omega;\eR^{d\times N})$ such that $\nabla\boldu^n_i\ModConv{f}\nabla(\boldu\theta_i)$. Then by Lemma~\ref{Lem:ProdConv} we get for the sequence $\{\boldu^n\}_{n=1}^\infty$ defined by $\boldu^n=\sum_{i=1}^K\boldu^n_i$ for each $n\in\eN$
\begin{equation*}
\int_\Omega \bar\boldA\cdot\nabla \boldu\dx=\lim_{n\to\infty}\int_\Omega \bar\boldA\cdot\nabla \boldu^{n}\dx=\lim_{n\to\infty}\int_\Omega \boldF\cdot\nabla  \boldu^{n}\dx=\int_\Omega \boldF\cdot\nabla \boldu\dx,
\end{equation*}
which implies \eqref{LimIntProd} along with \eqref{PartIdent}.
We note that if $M$ satisfies $\Delta_2$--condition we proceed analogously using \eqref{BURegT}--\eqref{bARegT} and the approximation by smooth compactly supported functions in modular topology of gradients in $L^{h^{**}}(\Omega;\eR^d)$, $L^{h^{**}}(\Omega;\eR^{d\times N})$ respectively.\\
The next three steps follow the same lines as an analogue part of the proof in~\cite{BGKSG17}, however for completeness we include the main reasoning.

\noindent
\textbf{Step 2}:   In this  part we concentrate on showing  that the following inequality
 \begin{equation}\label{LimMonIneq}
	0\leq\int_\Omega\int_Y (\boldA^0(x,y)-\boldA(y,\boldV(x,y)))\cdot(\nabla \boldu(x)+\boldU(x,y)-\boldV(x,y))\dy\dx
\end{equation}
 holds for all $\boldV\in C^\infty_c(\Omega;C^\infty_{per}(Y;\eR^{d\times N}))$.
 By Lemma~\ref{Lem:UBSeq}  for  $\boldV\in C^\infty_c(\Omega;C^\infty_{per}(Y;\eR^{d\times N}))$ we have
  that $\boldA(\cdot,\boldV)\in L^\infty(\Omega\times Y;\eR^{d\times N})$. Due to the appropriate embeddings $L^\infty(\Omega\times Y;\eR^{d\times N})\subset E^{m^*_1}(\Omega\times Y;\eR^{d\times N})\subset E^{m^*_2}(\Omega\times Y;\eR^{d\times N})$ for $\boldV^k(x)=\boldV(x,x\varepsilon_k^{-1})$ and $\tilde\boldA^k(x):=\boldA(x\varepsilon_k^{-1},\boldV^k)$ we obtain for $k\to\infty$ that
\begin{equation}\label{S2SC}
	\begin{alignedat}{2}
		\boldV^k&\STSCon \boldV &&\text{ in }E^{m_i}(\Omega\times Y;\eR^{d\times N}),\\
		\tilde\boldA^k&\STSCon \boldA(\cdot,\boldV(\cdot,\cdot)) &&\text{ in }E^{m_i^*}(\Omega\times Y;\eR^{d\times N}), i=1,2,
	\end{alignedat}
\end{equation}
and consequently
\begin{equation}\label{LimProd}
\lim_{k\rightarrow \infty}\int_\Omega \tilde\boldA^k(x)\cdot\boldV^k(x)\dx=\int_\Omega\int_Y \boldA(y,\bpsi(x,y))\cdot\boldV(x,y)\dy\dx.
\end{equation}
By  \ref{AF} we have
\begin{equation}\label{Ineq-k}
	\begin{split}
 \int_\Omega (\boldA^k(x)-\tilde\boldA^k(x))\cdot(\nabla\uk(x)-\boldV^k(x))\dx\ge0.
 	\end{split}
\end{equation}
We shall conclude \eqref{LimMonIneq} by passing with $k\to\infty$ in~\eqref{Ineq-k}.
Using directly \eqref{LimIntProd} together with~\eqref{AZAv} yields  that
 \begin{equation}\label{I}
 \lim_{k\rightarrow \infty}\int_\Omega \boldA^k(x)\cdot\nabla\uk(x)\dx=\int_\Omega \int_Y \boldA^0 \cdot \nabla \boldu\\
	= \int_\Omega \int_Y \boldA^0 \cdot (\nabla \boldu+\boldU).
\end{equation}
Note that the last equality trivially follows from \eqref{BUReg} and \eqref{AzDual}.
To pass to the limit in the remaining terms we use \eqref{WTSC}, \eqref{S2SC} together with Lemma~\ref{Lem:Facts2S}~\ref{F2SSev} and \eqref{LimProd}. Thus the proof of this part is complete.
%

\noindent
\textbf{Step 3}:
Our  goal is to show that~\eqref{LimMonIneq} holds not only for $\boldV\in C^\infty_c(\Omega;C^\infty_{per}(Y;\eR^{d\times N}))$ but also for  $\boldV\in L^\infty(\Omega\times Y;\eR^{d\times N})$. For this purpose we take an arbitrary function $\boldV \in C^\infty_c(\Omega;C^\infty_{per}(Y;\eR^{d\times N}))$ and  consider a sequence $\{K^m\}_{m=1}^\infty$ of compact subsets of $\Omega$ such that $K^1\subset K^2\subset\ldots\Omega$ and $\bigcup_{m=1}^\infty K^m=\Omega$. Since  $\boldV^m:=\boldV\chi_{K^m}$ are bounded in $L^\infty(\Omega\times Y)$ for every $m\in\eN$, thus there exists
 a positive constant $c$ such that
\begin{equation}\label{AVMUnifBound}
	\|\boldA(\cdot, \boldV^m)\|_{L^\infty(\Omega\times Y)}\leq c \ \text{ for all }m\in\eN,
\end{equation}
see Lemma~\ref{Lem:UBSeq} for details.
Using~\ref{MTh} and~\eqref{AVMUnifBound} gives
\begin{equation*}
\begin{split}
	\int_\Omega\int_Y& M(y,\boldV^m)+M^*(y,\boldA(y,\boldV^m)\dy\dx\leq \int_\Omega\int_Y m_2(|\boldV^m|)+m_1^*(|\boldA(y,\boldV^m)|)\dy\dx\\
	&\leq \int_\Omega\int_Y m_2(\|\boldV^m\|_{L^\infty(\Omega\times Y)})+m_1^*(\|\boldA(\cdot,\boldV^m)\|_{L^\infty(\Omega\times Y)})\leq c.
\end{split}
\end{equation*}
With the help of Lemma~\ref{Lem:UnifIntegr} the above estimate allows to conclude that  $\{\boldV^m\}_{m=1}^\infty$ and $\{\boldA(\cdot,\boldV^m)\}_{m=1}^\infty$ are uniformly integrable.
The uniform integrability together with a convergence in measure of these sequences with a use of Lemma~\ref{Lem:MConvEquiv} give
\begin{equation}\label{ConvInM}
	\boldV^m\ModConvM\boldV\text{ in }L^{M_y}(\Omega\times Y;\eR^{d\times N}),\ \boldA(\cdot,\boldV^m)\ModConv{M^*}\boldA(\cdot,\boldV) \text{ in }L^{M^*_y}(\Omega\times Y;\eR^{d\times N})\text{ as }m\to\infty.
\end{equation}
Let us consider a standard mollifier $\omega\in C^\infty(\eR^d\times\eR^d)$. Since $\boldV^m$ is supported in $K^m\subset\Omega$ for all $m$, we can find for every $m$ a sequence $\delta^n\to 0$ as $n\to\infty$ such that, defining $\boldV^{m,n}:=\boldV^m*\omega^n$, where $\omega^n(z)=(\delta^n)^{-2d}\omega\left(\frac{z}{\delta^n}\right)$, we have $\boldV^{m,n}\in C^\infty_c(\Omega;C^\infty_{per}(Y))^{d\times N}$. Obviously $\|\boldV^{m,n}\|_{L^\infty(\Omega\times Y)}\leq\|\boldV^m\|_{L^\infty(\Omega\times Y)}$.

  In the same manner as above we show that for every $m$
\begin{equation}\label{ConvInN}
	\boldV^{m,n}\ModConvM\boldV^m\text{ in }L^{M_y}(\Omega\times Y;\eR^{d\times N}),\ \boldA(\cdot,\boldV^{m,n})\ModConv{M^*}\boldA(\cdot,\boldV^m) \text{ in }L^{M^*_y}(\Omega\times Y;\eR^{d\times N})\text{ as }n\to\infty.
\end{equation}
Finally, using~\eqref{ConvInM},~\eqref{ConvInN}  and Lemma~\ref{Lem:ProdConv} we infer from Step 2 that
\begin{equation*}
	0\leq \lim_{m\to\infty}\lim_{n\to\infty}\int_\Omega\int_Y (\boldA^0-\boldA(y,\boldV^{m,n}))\cdot(\nabla \boldu+\boldU-\boldV^{m,n})=\int_\Omega\int_Y (\boldA^0-\boldA(y,\boldV))\cdot(\nabla \boldu+\boldU-\boldV).
\end{equation*}
\\
\textbf{Step 4}: For $k>0$ define
\begin{equation*}
S_k=\{(x,y)\in\Omega\times Y:|\nabla \boldu(x)+\boldU(x,y)|\leq k\}
\end{equation*}
and $\chi_k$ be the characteristic function of $S_k$. We replace $\boldV\in L^\infty(\Omega\times Y;\eR^{d\times N})$ in \eqref{LimMonIneq} by $(\nabla \boldu+\boldU)\chi_j+h\boldV\chi_i$ where $0<i<j$ and $h\in(0,1)$ to obtain
\begin{equation*}
	\begin{split}
	0\leq& \int_\Omega\int_Y \boldA^0\cdot(\nabla \boldu+\boldU-(\nabla \boldu-\boldU)\chi_j)\dy\dx\\
	&-\int_\Omega\int_Y\boldA(y,(\nabla \boldu-\boldU)\chi_j+h\boldV\chi_i))\cdot(\nabla \boldu+\boldU-(\nabla \boldu+\boldU)\chi_j)\\
	&+h\int_\Omega\int_Y(\boldA^0-\boldA(y,(\nabla \boldu+\boldU)\chi_j+h\boldV\chi_i))\cdot\boldV\chi_i\dy\dx.	\end{split}
\end{equation*}
The first term on the right-hand side vanishes  when passing to the limit with  $j\rightarrow\infty$ by the Lebesgue dominated convergence theorem and the fact that $|\Omega\times Y\setminus S_j|\rightarrow 0$ as $j\rightarrow \infty$. Since  $(\nabla \boldu+\boldU)\chi_j+h\boldV\chi_i=0$  in $S_j$, thus also the second term vanishes. After dividing the resulting inequality by $h$ we arrive at
\begin{equation}\label{IntPos}
	\int_{S_i}(\boldA^0-\boldA(y,\nabla \boldu+\boldU+h\boldV))\cdot\boldV\dy\dx\geq 0.
\end{equation}
By \ref{MTh} we obtain
\begin{equation}\label{UnifEstOnSI}
\begin{split}
	\int_{S_i} &M^*(y,\boldA(y,\nabla \boldu+\boldU+h\boldV))\dy\dx\leq \int_{S_i} m_1^*(|\boldA(y,\nabla \boldu+\boldU+h\boldV)|)\dy\dx\\
	&\leq |S_i|m_1^*(\|\boldA(\cdot,\nabla \boldu+\boldU+h\boldV)\|_{L^\infty(S_i)})\leq c.
	\end{split}
\end{equation}
We need to  estimate  $\|\boldA(\cdot,\nabla \boldu+\boldU+h\boldV)\|_{L^\infty(S_i)}$ uniformly with respect to $h$. For this purpose we proceed in a similar way as in  \eqref{AVMUnifBound} since
\begin{equation*}
\|\nabla \boldu+\boldU+h\boldV\|_{L^\infty(S_i)}\leq \|\nabla \boldu+\boldU\|_{L^\infty(S_i)}+\|\boldV\|_{L^\infty(\Omega\times Y)}\leq i+\|\boldV\|_{L^\infty(\Omega\times Y)}.
\end{equation*}
As $\boldA(y,\nabla \boldu+\boldU+h\boldV)\rightarrow\boldA(y,\nabla \boldu+\boldU)$ a.e. in $S_i$ and $\{\boldA(y,\nabla \boldu+\boldU+h\boldV)\}_{h\in(0,1)}$ is uniformly integrable on $S_i$ due to \eqref{UnifEstOnSI}  then again by the Vitali theorem we conclude
\begin{equation*}
	\boldA(y,\nabla \boldu+\boldU+h\boldV) \rightarrow \boldA(y,\nabla \boldu+\boldU)\text{ in }L^1(S_i)
\end{equation*}
as
 $h\rightarrow 0_+$. Thus passing to the limit  in \eqref{IntPos} we arrive at
\begin{equation*}
	\int_{S_i}(\boldA^0-\boldA(y,\nabla \boldu+\boldU))\cdot\boldV\dy\dx\geq 0.
\end{equation*}
Choosing
\begin{equation*}
	\boldV=-\frac{\boldA^0-\boldA(y,\nabla \boldu+\boldU)}{|\boldA^0-\boldA(y,\nabla \boldu+\boldU)|+1}
\end{equation*}
yields
\begin{equation}\label{PointEq}
	\boldA^0(x,y)=\boldA(y,\nabla \boldu(x)+\boldU(x,y))
\end{equation}
for a.a. $(x,y)\in S_i$. Since $i$ was arbitrary and $|\Omega\times Y\setminus S_i|\rightarrow 0$ as $i\rightarrow \infty$, the above holds a.e. in $\Omega\times Y$.
Moreover, due to the properties \eqref{BUReg} and \eqref{AzDual} we obtain that $\boldU(x,\cdot)$ is equal to the gradient of a weak solution of the cell problem \eqref{CellPr} corresponding to $\boldxi=\nabla \boldu(x)$. Finally, we get by \eqref{AZAv} and \eqref{AhDef} that
\begin{equation}\label{BAIdent}
	\bar\boldA(x)=\int_Y\boldA^0(x,y)\dy=\int_Y \boldA(y,\nabla \boldu(x)+\boldU(x,y))\dy=\hat\boldA(\nabla \boldu(x)).
\end{equation}
	\textbf{Step 5}:
	Since we know that \eqref{HPr} possesses a unique solution $\boldu$ and we can extract from any subsequence of $\{\boldu^j\}_{j=1}^\infty$ a subsequence that converges to $\boldu$ weakly$^*$ in $W^1_0L^{m_1}(\Omega;\eR^N)$ (thus also weakly in $W^{1,1}_0(\Omega;\eR^N)$), the whole sequence $\{\boldu^j\}_{j=1}^\infty$ converges to $\boldu$ weakly$^*$ in $W^1_0L^{m_1}(\Omega;\eR^N)$, weakly in $W^{1,1}_0(\Omega;\eR^N)$ respectively.


\begin{appendix}
\section{Musielak--Orlicz spaces}\label{Ape1}
Assume here that $\Sigma\subset\eR^n$ is a bounded domain and $n\in \mathbf{N}$ is arbitrary. A function $M:\Sigma\times\eR^n\rightarrow[0,\infty)$ is said to be an ${\mathcal N}-$function if it satisfies the following four requirements:
	\begin{enumerate}
		\item $M$ is a Carath\'eodory function such that $M(x,\boldxi)=0$ if and only if $\boldxi=\bzero$. In addition we assume that for almost all $x\in \Sigma$, we have  $M(x,\boldxi)=M(x,-\boldxi)$.
		\item For almost all $x\in \Sigma$ the mapping $\boldxi\mapsto M(x,\boldxi)$ is convex.
		\item For almost all $x\in \Sigma$ there holds $\lim_{\substack{|\boldxi|\rightarrow\infty}}\frac{M(x,\boldxi)}{|\boldxi|}=\infty$.
		\item For almost all $x\in \Sigma$ there holds $\lim_{|\boldxi|\rightarrow 0}\frac{M(x,\boldxi)}{|\boldxi|}=0$.
	\end{enumerate}		
The corresponding complementary ${\mathcal N}$--function $M^*$ to $M$ is defined for $\boldeta\in\eR^n$ and almost all  $x\in \Sigma$ by
\begin{equation*}
		M^*(x,\boldeta):=\sup_{\boldxi\in\eR^n}\{\boldxi\cdot\boldeta-M(x,\boldxi)\}
\end{equation*}
and directly from this  definition, one obtains the generalized Young inequality
\begin{equation}\label{YIneq}
		\boldxi\cdot\boldeta\leq M(x,\boldxi)+M^*(x,\boldeta),
\end{equation}
valid for all $\boldxi,\boldeta\in\eR^n$ and almost everywhere in $\Sigma$. In addition, for $\boldxi:=\nabla_{\boldeta}M^*(x,\boldeta)$, we obtain the equality sign in~\eqref{YIneq}, see \cite[Section 5]{SI69}. Finally, an ${\mathcal N}$-function $M$ is said to satisfy the $\Delta_2$--condition if there exists $c>0$ and a nonnegative function $h\in L^1(\Sigma)$ such that for a.a. $x\in\Sigma$ and all $\boldxi\in\eR^n$
	\begin{equation*}
		M(x,2\boldxi)\leq cM(x,\boldxi)+h(x).
	\end{equation*}

Having introduced the notion of an $\mathcal{N}$--function, we can define the generalized Musielak--Orlicz class	$\mathcal{L}^M(\Sigma)$ as a set of all measurable functions $\boldv:\Sigma\rightarrow\eR^n$ in the following way
\begin{equation*}
		\mathcal{L}^M(\Sigma):=\left\{\boldv \in L^1(\Sigma;\eR^n); \; \int_\Sigma M(x,\boldv(x))\dx<\infty\right\}.
\end{equation*}
In general the class $\mathcal{L}^M(\Sigma)$ does not form a linear vector space and therefore, we define the generalized Musielak--Orlicz space $L^M(\Sigma)$ as the smallest linear space containing $\mathcal{L}^M(\Sigma)$. More precisely, we define
\begin{equation*}
		L^M(\Sigma):=\left\{\boldv \in L^1(\Sigma;\eR^n); \; \textrm{ there exists $\lambda>0$ such that }\int_\Sigma M\left(x,\frac{\boldv(x)}{\lambda}\right)\dx<\infty\right\}.
\end{equation*}
It can be shown that $L^M(\Sigma)$ is a Banach space with respect to the Orlicz norm
	\begin{equation*}
		\|\boldv\|_{L^M}:=\sup\left\{\left|\int_{\Sigma}\boldv(x)\boldw(x)\dx\right|\!:\boldw\in L^{M^*}(\Sigma), \int_{\Sigma}M^*(x,\boldw(x))\dx\leq 1 \right\}
	\end{equation*}
or the equivalent Luxemburg norm
\begin{equation*}
		\|\boldv\|_{L^M}:=\inf\left\{\lambda>0:\int_\Sigma M\left(x,\frac{\boldv(x)}{\lambda}\right)\dx\leq 1\right\}.
\end{equation*}
Moreover, we have the following generalized H\"{o}lder inequality, see \cite[Theorem 4.1.]{SII69},
\begin{equation*}
		\left|\int_\Sigma \boldv\cdot\boldw \dx \right|\leq 2\|\boldv\|_{L^M}\|\boldw\|_{L^{M^*}}
\end{equation*}
valid for all $\boldv\in L^M(\Sigma)$ and all $\boldw\in L^{M^*}(\Sigma)$. It is not difficult to observe directly from the definition (or by Young inequality~\eqref{YIneq}), that
\begin{equation}\label{SSF}
		\|\boldv\|_{L^M}\leq c\left(\int_{\Sigma}M(x,\boldv(x))\dx+1\right),
\end{equation}
with some $c>0$, that can be set $c=1$ if we work with the Orlicz norm. Similarly, for the functional $\mathcal{F}:L^M(\Sigma)\rightarrow\eR$ defined as
\begin{equation*}
		\mathcal{F}(\boldv):=\int_\Sigma M(x,\boldv(x))\dx,
\end{equation*}
we can directly obtain from the definition and due to the convexity of $M$ that if $\|\boldv\|_{L^M}\leq 1$ and as the Luxemburg norm is considered then
\begin{equation}
\label{SSS}
\mathcal{F}(\boldv)\leq\|\boldv\|_{L^M}.
\end{equation}
Finally, we also recall the definition of the conjugate functional $\mathcal{F}^*:L^{M^*}(\Sigma)\rightarrow\eR$
$$
\mathcal{F}^*(\boldw):= \sup_{\boldv \in L^M(\Sigma)} \left(\int_{\Sigma} \boldv \cdot \boldw \dx - \mathcal{F}(\boldv) \right)
$$
and it is not difficult to observe by using the Young inequality that\footnote{Young inequality~\eqref{YIneq} implies $\mathcal{F}^*(\boldw)\leq \int_\Sigma M^*(x,\boldw(x))\dx$. On the other hand, we have $M^*(\cdot,\boldw)=\boldw\cdot\boldv-M(\cdot,\boldv)$ for $\boldv(x):=\nabla_{\boldxi}M^*(x,M^*(x,\boldw))$, which after integration leads to $\mathcal{F}^*(\boldw)\ge \int_\Sigma M^*(x,\boldw(x))\dx$ and~\eqref{SST} follows.}
\begin{equation}
\label{SST}
\mathcal{F}^*(\boldw) =\int_\Sigma M^*(x,\boldw(x))\dx.
\end{equation}

We complete this subsection by recalling the basic functional-analytic facts about the generalized Musielak--Orlicz spaces. For this purpose we define an additional space
\begin{equation*}
E^M(\Sigma):= \overline{\left\{L^{\infty}(\Sigma;\eR^n)\right\}}^{\|\cdot \|_{L^M(\Sigma)}}.
\end{equation*}
The following key lemma summarizes the fundamental properties of the involved function spaces (see e.g. \cite{S05} for details).
\begin{Lemma}[separability, reflexivity]\label{Thm:OrlSpProp}
	Let $M$ be an ${\mathcal N}$--function. Then
	\begin{enumerate}
		\item $E^M(\Sigma)=L^M(\Sigma)$ if and only if $M$ satisfies the $\Delta_2$--condition,
		\item $(E^M(\Sigma))^*=L^{M^*}(\Sigma)$, i.e., $L^{M^*}(\Sigma)$ is a dual space to $E^M(\Sigma)$,
		\item $E^M(\Sigma)$ is separable,
		\item $L^M(\Sigma)$ is separable if and only if $M$ satisfies the $\Delta_2$--condition,
		\item $L^M(\Sigma)$ is reflexive if and only if $M,M^*$ satisfy the $\Delta_2$--condition.
	\end{enumerate}
\end{Lemma}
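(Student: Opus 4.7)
My plan is to establish the five claims in a logical order that reuses earlier parts for later ones: first the density/characterization statement (1), then the duality (2) and separability of $E^M$ (3), and finally derive (4) and (5) by combining these with the $\Delta_2$ machinery.

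For (1), the easy direction is that if $M$ satisfies $\Delta_2$, then for every $\boldv\in L^M(\Sigma)$ the truncations $\boldv_n:=\boldv\,\chi_{\{|\boldv|\le n\}}$ lie in $L^\infty(\Sigma)\subset E^M(\Sigma)$ and $M(x,(\boldv-\boldv_n)/\lambda)\to 0$ a.e.\ with an $L^1$ majorant obtained by iterating the $\Delta_2$ bound $M(x,2\boldxi)\le cM(x,\boldxi)+h(x)$; dominated convergence then yields $\|\boldv-\boldv_n\|_{L^M}\to 0$, so $\boldv\in E^M(\Sigma)$. For the converse, if $\Delta_2$ fails, I would construct explicitly a function $\boldv\in L^M(\Sigma)$ that cannot be approximated in $L^M$-norm by bounded functions: the failure of $\Delta_2$ gives a sequence $\boldxi_k$ and measurable sets $E_k$ on which $M(x,\boldxi_k)$ grows so fast that any bounded truncation leaves a residual term whose modular stays bounded away from $0$ for every scaling $\lambda<1$, contradicting $\boldv\in E^M(\Sigma)$.

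For (2), the Hölder inequality displayed in the appendix shows that every $\boldw\in L^{M^*}(\Sigma)$ induces a continuous linear functional $\Phi_{\boldw}(\boldv):=\int_\Sigma \boldv\cdot\boldw\,\dx$ on $E^M(\Sigma)$ with $\|\Phi_{\boldw}\|\le 2\|\boldw\|_{L^{M^*}}$; conversely, a bounded linear $\Phi$ on $E^M(\Sigma)$ restricts to a bounded functional on $L^\infty(\Sigma)\subset E^M(\Sigma)$, hence by the classical Riesz representation (using finiteness of $|\Sigma|$) is represented by a finitely additive set function that turns out to be countably additive and absolutely continuous with respect to Lebesgue measure, yielding a density $\boldw$. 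The key step is checking $\boldw\in L^{M^*}(\Sigma)$: testing $\Phi$ against $\boldv_N:=\nabla_{\boldeta}M^*(x,\boldw)\chi_{\{|\boldw|\le N\}}$, which lies in $L^\infty\subset E^M$, gives $\int_{\{|\boldw|\le N\}}M^*(x,\boldw)\dx\le \|\Phi\|\,(1+\int M(x,\boldv_N))$, and an argument using the equality case in the Young inequality~\eqref{YIneq} together with monotone convergence bounds the modular of $M^*(\cdot,\boldw)$ uniformly in $N$. For (3) I would take the countable family of rational-valued simple functions based on a countable generating algebra of measurable subsets of $\Sigma$ (e.g.\ finite unions of dyadic cubes intersected with $\Sigma$); density in $L^\infty(\Sigma)$ with respect to $\|\cdot\|_{L^M}$ follows because $\|\boldv\|_{L^M}\le c(|\Sigma|+1)\|\boldv\|_{L^\infty}$ via~\eqref{SSF}, and density of $L^\infty$ in $E^M$ is built into the definition of $E^M$.

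Finally, (4) follows from (1) and (3): if $\Delta_2$ holds, then $L^M=E^M$ is separable by (3); conversely, if $\Delta_2$ fails, the same sequence $\boldxi_k$ from (1) lets one construct an uncountable family $\{\boldv_A\}_{A\subset\eN}$ of functions in $L^M(\Sigma)$ such that $\|\boldv_A-\boldv_B\|_{L^M}\ge c>0$ for $A\ne B$, ruling out separability. For (5), if both $M$ and $M^*$ satisfy $\Delta_2$, then by (1) we have $L^M=E^M$ and $L^{M^*}=E^{M^*}$, hence by (2) applied to both functions, $(L^M)^*=L^{M^*}$ and $(L^{M^*})^*=L^M$, so the canonical embedding $L^M\hookrightarrow L^{M^{**}}$ is surjective, giving reflexivity. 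Conversely, reflexivity implies separability of the dual, and by (4) applied to $M^*$ (resp.\ $M$) this forces $M^*$ (resp.\ $M$) to satisfy $\Delta_2$. The main technical obstacle is the representation step in (2), particularly verifying $\boldw\in L^{M^*}$ rather than merely $\boldw\in L^1_{loc}$; everything else is either bookkeeping or a direct consequence of the pointwise $\Delta_2$ estimate combined with dominated/monotone convergence.
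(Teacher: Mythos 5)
The paper itself does not prove Lemma~\ref{Thm:OrlSpProp}; it quotes it from the literature (see \cite{S05}), so any argument you give is by construction a different route. Your outline is the standard one (truncation for (1), Young's equality plus a Radon--Nikodym representation for (2), simple functions for (3), bootstrapping for (4)--(5)), but as written it has genuine gaps. In (2), the test functions $\boldv_N=\nabla_{\boldeta}M^*(x,\boldw)\chi_{\{|\boldw|\le N\}}$ need not be bounded: $M^*(x,\cdot)$ varies with $x$, so its (sub)gradient on $\{|\boldw|\le N\}$ can be unbounded in $x$; you must truncate by the size of a \emph{measurable selection} of $\partial_{\boldeta}M^*(x,\boldw)$ itself (differentiability of $M^*(x,\cdot)$ is not assumed). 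More importantly, your displayed inequality $\int_{\{|\boldw|\le N\}}M^*(x,\boldw)\dx\le\|\Phi\|\bigl(1+\int M(x,\boldv_N)\dx\bigr)$ does not give a bound uniform in $N$, since the right-hand side contains the uncontrolled modular of $\boldv_N$. The standard repair is to normalize: work with $\boldw/\lambda$ for some $\lambda>\|\Phi\|$, so that, with $\chi_N$ the characteristic function of the truncation set and $\boldv_N$ the bounded selection of $\partial_{\boldeta}M^*(\cdot,\boldw/\lambda)$ there, Young's equality gives $\int M^*(x,\boldw/\lambda)\chi_N\dx+\int M(x,\boldv_N)\chi_N\dx=\Phi(\boldv_N\chi_N)/\lambda\le(\|\Phi\|/\lambda)\bigl(1+\int M(x,\boldv_N)\chi_N\dx\bigr)$; since $\|\Phi\|/\lambda<1$ the $M$-terms are absorbed and monotone convergence applies. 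Also note that countable additivity/absolute continuity of your set function in (2), and the density step in (3) (the bound $\|\boldv\|_{L^M}\le c(|\Sigma|+1)\|\boldv\|_{L^\infty}$ is not what \eqref{SSF} yields), tacitly use $\int_\Sigma\sup_{|\boldxi|\le R}M(x,\boldxi)\dx<\infty$, i.e. condition \eqref{MLocBound}: without it $\chi_A\boldxi$ need not lie in $L^M$ and uniform convergence does not imply $L^M$-norm convergence. This hypothesis is inherited from the cited reference and should be made explicit in your proof.

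Two further points. The ``only if'' halves of (1) and (4) are the genuinely hard part in the Musielak--Orlicz setting: the negation of the $\Delta_2$--condition is quantified over \emph{all} constants $c$ and \emph{all} $h\in L^1(\Sigma)$, so producing a single $\boldv\in L^M(\Sigma)\setminus E^M(\Sigma)$, and the uncountable $\varepsilon$-separated family for (4), requires a diagonal construction with measurable selections of the points $\boldxi_k(x)$ where the inequality fails; your sketch asserts this construction but supplies none of it, and it is where the real work lies. Finally, in the converse of (5) the claim ``reflexivity implies separability of the dual'' is false for general Banach spaces (nonseparable Hilbert spaces are reflexive), and invoking (4) for $M^*$ presupposes the identification $(L^M)^*=L^{M^*}$, which is exactly what is unavailable before $\Delta_2$ is established, so the step is circular as written. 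A correct route goes through $E^M$: closed subspaces of reflexive spaces are reflexive, so $E^M(\Sigma)$ is reflexive and, by (3), separable; hence $L^{M^*}(\Sigma)=(E^M(\Sigma))^*$ is separable and reflexive, and (4) applied to $M^*$ gives $\Delta_2$ for $M^*$; repeating the argument for $E^{M^*}$, or observing that any $\boldv\in L^M\setminus E^M$ would define an element of $(E^M)^{**}$ outside the canonical image of $E^M$, then yields $\Delta_2$ for $M$.
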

We see from the above lemma that in some cases we need to face the problem with the density of bounded functions and also the lack of reflexivity and separability properties, that somehow excludes many analytical framework to be used. Thus, in addition to the strong/weak/weak$^*$ topology, we will also work with the modular topology. We say that a sequence $\{\boldv^k\}_{k=1}^\infty\subset L^M(\Sigma)$ converges modularly to $\boldv$ in $L^M(\Sigma)$ if there is $\lambda>0$ such that as $k\rightarrow\infty$
	\begin{equation*}
		\int_{\Sigma} M\left(x,\frac{\boldv^k(x)-\boldv(x)}{\lambda}\right)\dx\rightarrow 0.
	\end{equation*}
We use the notation $\boldv^k\ModConvM\boldv$ for the modular convergence in $L^M(\Sigma)$. The properties concerning the modular convergence are stated in the following lemmas.

\begin{Lemma}\cite[Proposition 2.2.]{GSG08}\label{Lem:ProdConv}
Let $M$ be an ${\mathcal N}$--function and $M^*$ be the conjugate ${\mathcal N}$--function to $M$. Suppose that sequences $\{\boldv^k\}_{k=1}^\infty$ and $\{\boldw^k\}_{k=1}^\infty$ are uniformly bounded in $L^M(\Sigma)$, $L^{M^*}(\Sigma)$ respectively. Moreover, let $\boldv^k\ModConvM\boldv$ and $\boldw^k\ModConv{M^*}\boldw$. Then $\boldv^k\cdot\boldw^k\rightarrow\boldv\cdot\boldw$ in $L^1(\Sigma)$ as $k\rightarrow\infty$.
\end{Lemma}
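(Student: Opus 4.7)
My plan is to apply the Vitali convergence theorem on the finite-measure set $\Sigma$. The two ingredients it requires are convergence in measure of $\boldv^k\cdot\boldw^k$ to $\boldv\cdot\boldw$, and uniform integrability of the product sequence. Fix $\lambda_1,\lambda_2>0$ witnessing the two modular convergences. By convexity of $M$ and $M^*$ and the uniform $L^M$, $L^{M^*}$-bounds on $\{\boldv^k\}$, $\{\boldw^k\}$, I may enlarge $\lambda_1,\lambda_2$ once and for all so that $\sup_k \int_\Sigma M(x,\boldv^k/\lambda_1)\dx\leq 1$ and $\sup_k \int_\Sigma M^*(x,\boldw^k/\lambda_2)\dx\leq 1$, and the two modular convergences still hold at these enlarged values. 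In particular $\boldv/\lambda_1\in L^M(\Sigma)$ and $\boldw/\lambda_2\in L^{M^*}(\Sigma)$ (via Fatou, once in-measure convergence is established).

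For convergence in measure, I first note that $M(x,(\boldv^k-\boldv)/\lambda_1)\to 0$ in $L^1(\Sigma)$ and hence in measure. I then transfer this to $\boldv^k\to\boldv$ in measure using the convexity-based lower bound $M(x,\boldxi)\geq (|\boldxi|/\delta)\,\phi_\delta(x)$ valid for $|\boldxi|\geq \delta$, where $\phi_\delta(x):=\min_{|\boldzeta|=\delta}M(x,\boldzeta)$; the bound follows from $M(x,\lambda\boldxi)\leq \lambda M(x,\boldxi)$ for $\lambda\in[0,1]$ (a consequence of convexity and $M(x,0)=0$), and $\phi_\delta$ is Carath\'eodory and strictly positive almost everywhere since $M(x,\cdot)$ is continuous with unique zero at the origin. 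A Chebyshev-type argument on $\{|(\boldv^k-\boldv)/\lambda_1|\geq\delta\}$ then precludes $\boldv^k-\boldv$ failing to tend to zero in measure. The identical argument applied to $\boldw^k-\boldw$, followed by continuity of multiplication, yields $\boldv^k\cdot\boldw^k\to\boldv\cdot\boldw$ in measure on $\Sigma$.

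For uniform integrability, decompose $\boldv^k/(2\lambda_1)=\tfrac12(\boldv^k-\boldv)/\lambda_1+\tfrac12\boldv/\lambda_1$ and use convexity (with evenness of $M$ in the second argument to absorb signs) to obtain
$$M\!\left(x,\frac{\boldv^k}{2\lambda_1}\right)\leq \tfrac12 M\!\left(x,\frac{\boldv^k-\boldv}{\lambda_1}\right)+\tfrac12 M\!\left(x,\frac{\boldv}{\lambda_1}\right).$$
The first summand on the right has vanishing $L^1$-norm (so is uniformly integrable: the large-$k$ terms by smallness on $\Sigma$, the finitely many small-$k$ terms by absolute continuity of the integral), while the second is a fixed element of $L^1(\Sigma)$. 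Hence $\{M(x,\boldv^k/(2\lambda_1))\}_k$ is uniformly integrable; the analogous bound gives uniform integrability of $\{M^*(x,\boldw^k/(2\lambda_2))\}_k$. A rescaled application of the generalized Young inequality \eqref{YIneq} furnishes the pointwise control
$$|\boldv^k\cdot\boldw^k|=4\lambda_1\lambda_2\left|\frac{\boldv^k}{2\lambda_1}\cdot\frac{\boldw^k}{2\lambda_2}\right|\leq 4\lambda_1\lambda_2\!\left[M\!\left(x,\frac{\boldv^k}{2\lambda_1}\right)+M^*\!\left(x,\frac{\boldw^k}{2\lambda_2}\right)\right],$$
transferring uniform integrability to the product sequence. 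Vitali's theorem now delivers $\boldv^k\cdot\boldw^k\to\boldv\cdot\boldw$ in $L^1(\Sigma)$, and since no subsequence extraction is used, the whole sequence converges. The main technical obstacle is the first step: translating modular convergence into convergence in measure in the non-homogeneous setting where $M$ depends on $x$ requires the convexity-based lower bound together with the a.e.\ positivity of $\phi_\delta$; once this is in hand, the rest is a routine convexity-plus-Young application.
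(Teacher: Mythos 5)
Your argument is correct. Note that the paper does not prove this lemma at all --- it is quoted from \cite[Proposition 2.2]{GSG08} --- so there is no in-paper proof to compare against; your route (convergence in measure via the lower bound $M(x,\boldxi)\geq \frac{|\boldxi|}{\delta}\min_{|\boldzeta|=\delta}M(x,\boldzeta)$ for $|\boldxi|\geq\delta$, uniform integrability of the products through convexity and the rescaled Young inequality, and then the Vitali theorem on the finite-measure set $\Sigma$) is the standard proof of the cited result and is sound, up to the harmless remark that the first finitely many modulars $\int_\Sigma M\left(x,\frac{\boldv^k-\boldv}{\lambda_1}\right)\dx$ need not be finite, which can be handled by discarding finitely many terms and does not affect the asymptotic $L^1$--convergence.
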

\begin{Lemma}\cite[Lemma 2.2.]{GSG08}\label{Lem:UnifIntegr}
Let $M$ be an ${\mathcal N}$--function and assume that there is $c>0$ such that $\int_\Sigma M(x,\boldv^k)\dx\leq c$ for all $k\in\eN$. Then $\{\boldv^k\}_{k=1}^\infty$ is uniformly integrable.
\end{Lemma}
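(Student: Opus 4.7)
The goal is to verify the Dunford--Pettis / absolute-continuity characterization of uniform integrability: for every $\epsilon>0$ there must exist $\delta>0$ such that $|E|<\delta$ forces $\sup_k \int_E |\boldv^k|\dx < \epsilon$. My plan is to proceed in three stages, first promoting the given modular bound to a Luxemburg-norm bound, then applying the generalized Hölder inequality against the indicator $\mathbf{1}_E$, and finally controlling the $L^{M^*}$ norm of such indicators as $|E|\to 0$.

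For the first stage, I would exploit the convexity of $M(x,\cdot)$ together with $M(x,\bzero)=0$, which yields $M(x,\boldv^k/\lambda)\le\lambda^{-1}M(x,\boldv^k)$ for every $\lambda\ge 1$. Choosing $\lambda=\max(c,1)$ and integrating produces $\int_\Sigma M(x,\boldv^k/\lambda)\dx\le 1$, so the very definition of the Luxemburg norm delivers $\|\boldv^k\|_{L^M(\Sigma)}\le\max(c,1)$ uniformly in $k$. For the second stage, I would decompose $|\boldv^k|\le\sum_{j=1}^n |v_j^k|$ and apply the generalized Hölder inequality recalled in the appendix with the test function $\mathrm{sgn}(v_j^k)\,\boldsymbol{e}_j\,\mathbf{1}_E$ component by component, obtaining
\[
\int_E|\boldv^k|\dx\le 2n\,\|\boldv^k\|_{L^M(\Sigma)}\,\max_{1\le j\le n}\|\mathbf{1}_E\boldsymbol{e}_j\|_{L^{M^*}(\Sigma)}.
\]

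The main obstacle is the third stage: showing $\|\mathbf{1}_E\boldsymbol{e}_j\|_{L^{M^*}(\Sigma)}\to 0$ as $|E|\to 0$. Unpacking the Luxemburg norm, the bound $\|\mathbf{1}_E\boldsymbol{e}_j\|_{L^{M^*}(\Sigma)}\le\lambda$ is equivalent to $\int_E M^*(x,\boldsymbol{e}_j/\lambda)\dx\le 1$. For each fixed $\lambda>0$, the function $x\mapsto M^*(x,\boldsymbol{e}_j/\lambda)$ is measurable, nonnegative, and, under the standing framework of generalized $\mathcal N$--functions adopted in the paper, in $L^1(\Sigma)$ (the bounded domain $\Sigma$ together with the local integrability properties of $M^*(\cdot,\eta)$ for $\eta$ away from infinity suffice). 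Absolute continuity of the Lebesgue integral then provides $\delta_\lambda>0$ with $|E|<\delta_\lambda\Rightarrow \int_E M^*(x,\boldsymbol{e}_j/\lambda)\dx\le 1$. Given $\epsilon>0$, I would pick $\lambda$ so small that $2n\max(c,1)\lambda<\epsilon$ and then set $\delta:=\min_{1\le j\le n}\delta_\lambda$; combined with the first two stages, this yields the required $\epsilon$--$\delta$ condition. The delicate point is precisely that the anisotropic, nonautonomous nature of $M^*$ rules out a direct computation of $\|\mathbf{1}_E\|_{L^{M^*}}$ as a function of $|E|$ alone, and one must invoke absolute continuity of the integral together with the local integrability of $M^*(\cdot,\boldsymbol{e}_j/\lambda)$ built into the Musielak--Orlicz framework.
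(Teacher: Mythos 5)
Your first two stages are correct (convexity gives the modular-to-Luxemburg-norm bound, and the generalized H\"older inequality reduces everything to the size of $\|\mathbf{1}_E\boldsymbol{e}_j\|_{L^{M^*}(\Sigma)}$), but the third stage contains a genuine gap, and it is exactly the point you try to wave through: the integrability of $x\mapsto M^*(x,\boldsymbol{e}_j/\lambda)$ on $\Sigma$ --- indeed even the mere membership $\mathbf{1}_E\boldsymbol{e}_j\in L^{M^*}(\Sigma)$ needed to invoke H\"older at all --- is \emph{not} built into the framework of Appendix~\ref{Ape1}. The definition of an $\mathcal{N}$--function there imposes only pointwise-in-$x$ conditions (Carath\'eodory, convexity, evenness, super/sublinearity for a.a.\ $x$) and no integrability in $x$; this is precisely why the paper states \eqref{Cis1}, \eqref{Cis2}, \eqref{MLocBound} and \ref{MTh} as separate hypotheses wherever such control is used. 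Without an assumption of this type your step fails and, in fact, so does the statement in the stated generality: take $\Sigma=(0,1)$, $E_k=(\tfrac{1}{k+1},\tfrac1k)$, $a:=k^4$ on $E_k$, and $M(x,\xi):=|\xi|^2/a(x)$, which is an $\mathcal{N}$--function in the sense of Appendix~\ref{Ape1}; then $M^*(x,\eta)=a(x)|\eta|^2/4\notin L^1(\Sigma)$, and $v^k:=k^3\mathbf{1}_{E_k}$ satisfies $\int_\Sigma M(x,v^k)\dx\le 1$ while $\|v^k\|_{L^1(\Sigma)}\sim k$, so the family is not even bounded in $L^1$, let alone uniformly integrable. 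The lemma (which the paper does not prove but quotes from \cite{GSG08}) therefore tacitly relies on the definition of an $\mathcal{N}$--function used in that source, where the superlinear growth is uniform in $x$ (in the present paper this role is played by the lower bound $m_1(|\boldxi|)\le M(x,\boldxi)$ of \ref{MTh}); your sentence ``built into the Musielak--Orlicz framework'' is the missing hypothesis, not a harmless remark.

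Once that uniform lower bound is granted, the standard argument is shorter and bypasses $M^*$ and H\"older entirely: given $\epsilon>0$ choose $R$ with $m_1(t)\ge \tfrac{2c}{\epsilon}\,t$ for $t\ge R$; then for every measurable $E\subset\Sigma$ and every $k$ one has $\int_E|\boldv^k|\dx\le R|E|+\tfrac{\epsilon}{2c}\int_\Sigma M(x,\boldv^k)\dx\le R|E|+\tfrac{\epsilon}{2}$, so $\delta:=\epsilon/(2R)$ gives the required $\epsilon$--$\delta$ condition. Your duality route does become correct if you add either \ref{MTh} or an integrability assumption such as $\int_\Sigma\sup_{|\boldeta|\le R}M^*(x,\boldeta)\dx<\infty$ for all $R$ (the analogue of \eqref{Cis2}), but note that in the paper's own application of Lemma~\ref{Lem:UnifIntegr} inside the proof of Lemma~\ref{Lem:ExMDTwo} only \eqref{IntMSphere} for $M$ is available, which bounds $M$ from above and hence gives no upper bound on $M^*$; so the fix cannot be extracted from the hypotheses present there, whereas the uniform-coercivity argument above needs nothing about $M^*$.
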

\begin{Lemma}{\cite[Lemma 2.1.]{GSG08}}\label{Lem:MConvEquiv}
	Let $N\geq 1$, $M$ be an $\mathcal{N}$--function and $\{\boldv^k\}_{k=1}^\infty$ be a sequence of measurable $\eR^N-$valued functions on $\Sigma$. Then $\boldv^k\ModConvM\boldv$ in $L^M(\Sigma;\eR^N)$ if and only if $\boldv^k\rightarrow\boldv$ in measure and there exists some $\lambda>0$ such that $\{M(\cdot,\lambda \boldv^k)\}_{k=1}^\infty$ is uniformly integrable, i.e.,
	\begin{equation*}
		\lim_{R\rightarrow\infty}\left(\sup_{k\in\eN}\int_{\{x:|M(x,\lambda\boldv^k(x))|>R\}}M(x,\lambda\boldv^k(x))\dx\right)=0.
	\end{equation*}
\end{Lemma}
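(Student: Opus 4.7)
The plan is to establish the equivalence by proving each implication separately, with the forward direction resting on a Chebyshev-type argument and the reverse direction on the Vitali convergence theorem; both use only the convexity and $\mathcal{N}$-function structure of $M(x,\cdot)$ together with the finiteness of $|\Sigma|$.

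For the forward direction, assume $\boldv^k \ModConvM \boldv$, so that $\int_\Sigma M(x,(\boldv^k-\boldv)/\lambda)\dx \to 0$ for some $\lambda>0$. The uniform integrability of $\{M(\cdot,\boldv^k/(2\lambda))\}$ follows from the convexity--symmetry estimate
\[
M\!\left(x,\frac{\boldv^k}{2\lambda}\right) \le \frac{1}{2}M\!\left(x,\frac{\boldv^k-\boldv}{\lambda}\right) + \frac{1}{2}M\!\left(x,\frac{\boldv}{\lambda}\right),
\]
since the first term tends to zero in $L^1(\Sigma)$ and is therefore uniformly integrable, while the second is a fixed $L^1$ function. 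For convergence in measure, fix $\eta>0$ and set $E_k^\eta := \{x\in\Sigma : |\boldv^k(x)-\boldv(x)|>\eta\}$. Convexity together with $M(x,\bzero)=0$ makes $t\mapsto M(x,t\boldxi)$ nondecreasing for $t\ge 0$, so on $E_k^\eta$ the modular integrand dominates $g(x):=\inf_{|e|=1} M(x,(\eta/\lambda)e)$, where $g$ is measurable (as a countable infimum of Carath\'eodory functions) and positive a.e.\ since $M(x,\boldxi)=0 \iff \boldxi=\bzero$. Hence $\int_{E_k^\eta} g\dx \to 0$, and exhausting $\Sigma$ by the sets $A_m := \{g>1/m\}$ yields $|E_k^\eta\cap A_m|\to 0$ for each $m$ while $|\Sigma\setminus A_m|\to 0$ as $m\to\infty$, whence $|E_k^\eta|\to 0$.

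For the reverse direction, assume $\boldv^k\to\boldv$ in measure and $\{M(\cdot,\lambda\boldv^k)\}_k$ is uniformly integrable for some $\lambda>0$. The convexity bound
\[
M\!\left(x,\tfrac{\lambda}{2}(\boldv^k-\boldv)\right) \le \tfrac{1}{2} M(x,\lambda\boldv^k) + \tfrac{1}{2} M(x,\lambda\boldv)
\]
transfers uniform integrability to $\{M(\cdot,(\lambda/2)(\boldv^k-\boldv))\}_k$, and continuity of $M(x,\cdot)$ combined with $\boldv^k\to\boldv$ in measure forces this sequence to converge to $0$ in measure. Vitali's theorem on the finite-measure domain $\Sigma$ then yields $\int_\Sigma M(x,(\lambda/2)(\boldv^k-\boldv))\dx \to 0$, which is precisely modular convergence with parameter $2/\lambda$. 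The main technical subtlety is the convergence-in-measure step in the forward direction: one must secure a uniform-in-$k$ and uniform-in-direction positive lower bound for $M(x,\cdot)$ on the sphere of radius $\eta/\lambda$, and this is exactly what the minimization over the unit sphere together with the exhaustion $\bigcup_m\{g>1/m\}=\Sigma$ (modulo a null set) delivers.
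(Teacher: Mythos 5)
The paper itself does not prove this lemma---it is quoted verbatim from \cite[Lemma 2.1]{GSG08}---so there is no internal argument to compare against; judged on its own, your proof is correct and follows the standard route: convexity splitting plus Vitali's theorem for one implication, and convexity splitting plus a positive lower bound for $M(x,\cdot)$ on a sphere together with an exhaustion argument for the other. Three small points should be made explicit to be airtight. First, in the forward direction the majorant $M(x,\boldv/\lambda)$ need not be integrable for the particular $\lambda$ furnished by the modular convergence; since $\boldv\in L^M(\Sigma)$ there is $\lambda_0>0$ with $\int_\Sigma M(x,\boldv/\lambda_0)\dx<\infty$, and replacing $\lambda$ by $\max\{\lambda,\lambda_0\}$ is harmless because, by convexity and $M(x,\bzero)=\bzero$, the modular convergence is preserved when the parameter is enlarged. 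Second, in the reverse direction you use $M(\cdot,\lambda\boldv)$ as an integrable function; its integrability is not an assumption but follows from Fatou's lemma along an a.e.\ convergent subsequence combined with the bound $\sup_k\int_\Sigma M(x,\lambda\boldv^k)\dx<\infty$, which the stated uniform integrability yields on the bounded set $\Sigma$. Third, the measurability of $g(x)=\inf_{|e|=1}M(x,(\eta/\lambda)e)$ is obtained by restricting the infimum to a countable dense subset of the unit sphere, which is legitimate precisely because $M(x,\cdot)$ is continuous (the sphere itself is uncountable, so the parenthetical ``countable infimum'' needs this density remark); with that, your exhaustion argument via $A_m=\{g>1/m\}$ and $|\Sigma|<\infty$ correctly delivers convergence in measure.
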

\begin{Lemma}\cite[Lemma 2.3]{BGKSG17}\label{Lem:ModConvGrTrunc}
	Let $M$ be an ${\mathcal N}$--function and $\Sigma$ be a bounded domain. Then for any  $v\in V_0^M(\Sigma)$ we have $\nabla T_k(v)\ModConvM\nabla v$ as $k\rightarrow\infty$, where
	\begin{equation*}
		T_k (v)=\begin{cases}
			v &\text{ if }|v|\leq k\\
			k\frac{v}{|v|} &\text{ if }|v|> k.
		\end{cases}
	\end{equation*}
\end{Lemma}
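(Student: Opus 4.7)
The plan is to reduce the modular convergence to a direct application of Lebesgue's dominated convergence theorem, with the truncation doing all of the work for us. First I would record the chain rule identity for the composition of a Lipschitz function with a Sobolev function: since $T_k:\eR\to\eR$ (in the scalar case of interest, or the radial projection onto the ball of radius $k$ in the vectorial case) is $1$-Lipschitz and the set $\{|v|=k\}$ has zero gradient a.e.\ by Stampacchia's lemma, one gets
\begin{equation*}
\nabla T_k(v)(x) = \nabla v(x)\,\chi_{\{|v|<k\}}(x)\qquad\text{for a.a. }x\in\Sigma.
\end{equation*}
Consequently the difference takes the very explicit pointwise form
\begin{equation*}
\nabla T_k(v)(x)-\nabla v(x) = -\nabla v(x)\,\chi_{\{|v(x)|\geq k\}}\qquad\text{for a.a. }x\in\Sigma.
\end{equation*}

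Next I would exploit the hypothesis $v\in V_0^M(\Sigma)$, which means $\nabla v\in L^M(\Sigma)$, so by the very definition of the Musielak--Orlicz space there exists some $\lambda>0$ with
\begin{equation*}
\int_\Sigma M\!\left(x,\frac{\nabla v(x)}{\lambda}\right)\dx<\infty.
\end{equation*}
Fixing this $\lambda$, and using the evenness of the $\mathcal N$-function (axiom $M(x,\boldxi)=M(x,-\boldxi)$) together with $M(x,\bzero)=0$, the pointwise identity above yields
\begin{equation*}
M\!\left(x,\frac{\nabla T_k(v)(x)-\nabla v(x)}{\lambda}\right) = M\!\left(x,\frac{\nabla v(x)}{\lambda}\right)\chi_{\{|v(x)|\geq k\}}\qquad\text{a.e. in }\Sigma.
\end{equation*}

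The conclusion is then immediate: since $v\in W^{1,1}_0(\Sigma)\subset L^1(\Sigma)$, the function $v$ is finite a.e., so for a.e.\ $x\in\Sigma$ the indicator $\chi_{\{|v|\geq k\}}(x)$ vanishes once $k>|v(x)|$; hence the right-hand side tends to $0$ pointwise a.e.\ as $k\to\infty$ and is dominated by the fixed integrable function $M(\cdot,\nabla v/\lambda)$. The Lebesgue dominated convergence theorem gives
\begin{equation*}
\int_\Sigma M\!\left(x,\frac{\nabla T_k(v)(x)-\nabla v(x)}{\lambda}\right)\dx\longrightarrow 0,
\end{equation*}
which is precisely the modular convergence $\nabla T_k(v)\ModConvM\nabla v$. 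There is no real obstacle in this argument; the only subtlety to address cleanly is the chain-rule identity on the level sets $\{|v|=k\}$, which is handled by the standard Stampacchia/Marcus--Mizel result for compositions of Lipschitz maps with Sobolev functions. (In the vectorial case $N>1$ one invokes the same principle with $T_k$ being the $1$-Lipschitz radial retraction, so that $|\nabla T_k(\boldv)|\le|\nabla \boldv|$ a.e.\ and $\nabla T_k(\boldv)=\nabla\boldv$ a.e.\ on $\{|\boldv|\le k\}$; the same dominated convergence scheme then applies after invoking convexity and evenness of $M(x,\cdot)$ to dominate the integrand on the shrinking set $\{|\boldv|>k\}$.)
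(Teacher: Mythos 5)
Your scalar argument is precisely the standard proof of this lemma (the paper itself does not prove it; it is quoted from \cite[Lemma 2.3]{BGKSG17}): the Stampacchia identity $\nabla T_k(v)=\nabla v\,\chi_{\{|v|<k\}}$ a.e.\ reduces the modular to $\int_{\{|v|\ge k\}}M\left(x,\nabla v/\lambda\right)\dx$, and with the $\lambda$ furnished by the definition of $\nabla v\in L^M(\Sigma)$ this tends to zero by dominated convergence (equivalently, by absolute continuity of the integral, since $|\{|v|\ge k\}|\to0$). That part is complete and correct, and it is all the paper needs, since the lemma is applied only to scalar truncations (the case $N=1$ and the scalar cell functions there).

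The one weak point is your vectorial parenthetical. For $N>1$ the pointwise bound $|\nabla T_k(\boldv)|\le|\nabla\boldv|$ does not dominate the integrand: $M(x,\cdot)$ is anisotropic, so there is no monotonicity of $M(x,\boldxi)$ in $|\boldxi|$ and hence no inequality $M(x,\nabla T_k(\boldv)/\lambda)\le M(x,\nabla\boldv/\lambda)$. Convexity and evenness only reduce the problem to controlling $\int_{\{|\boldv|>k\}}M\left(x,\nabla T_k(\boldv)/\lambda\right)\dx$, whose finiteness (let alone smallness) is not clear from $\nabla\boldv\in L^M$ alone, because on that set $\nabla T_k(\boldv)$ is the gradient multiplied by a $\boldv$-dependent contraction and $M$ may weight the rotated/projected directions arbitrarily badly; closing this case would require additional structure (e.g.\ isotropic-type bounds as in \ref{MTh} together with integrability of $m_2(|\nabla\boldv|/\lambda)$). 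So do not present the vector case as a routine extension; either restrict the statement to scalar $v$, as it is used in the paper, or supply a genuinely different domination argument.
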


We continue with the lemma that provides the characterization of the space $E^M$.
\begin{Lemma}[Lemma~4.4, \cite{BGKSG17}]\label{Lem:EMChar}
Let $\Sigma\subset\Rd$ be bounded, $M$ be an ${\mathcal N}$--function such that for all $R>0$
\begin{equation}\label{MLocBound}
	\int_{\Sigma}\sup_{|\boldxi|\leq R}M(x,\boldxi)\dx<\infty.
\end{equation}
Then
\begin{equation*}
	E^M(\Sigma)=\{\boldv\in L^M(\Sigma):\; \textrm{ for all } t\geq 0 \textrm{ we have } t\boldv\in\mathcal{L}^M(\Sigma)\}.
\end{equation*}
\end{Lemma}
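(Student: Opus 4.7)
The plan is to prove the two inclusions separately, using convexity of $M(x,\cdot)$ together with the assumption \eqref{MLocBound} in the direction $E^M\subseteq\{\cdot\}$, and truncation combined with absolute continuity of the Lebesgue integral in the converse.

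For the inclusion $E^M(\Sigma)\subseteq\{\boldv\in L^M(\Sigma):\ t\boldv\in\mathcal{L}^M(\Sigma)\text{ for all }t\geq 0\}$, I fix $\boldv\in E^M(\Sigma)$ and $t>0$ (the case $t=0$ being trivial since $M(x,\bzero)=0$). By the definition of $E^M(\Sigma)$ there exists $\{\boldv^k\}\subset L^\infty(\Sigma;\eR^n)$ with $\|\boldv^k-\boldv\|_{L^M}\to 0$. The key move is the convexity splitting
\[
M(x,t\boldv(x))\leq \tfrac12 M(x,2t\boldv^k(x))+\tfrac12 M(x,2t(\boldv(x)-\boldv^k(x))).
\]
After integration the first term on the right is bounded by $\int_\Sigma\sup_{|\boldxi|\leq 2t\|\boldv^k\|_{L^\infty}}M(x,\boldxi)\dx$, which is finite by assumption \eqref{MLocBound}. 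For the second term I choose $k$ large enough that $\|2t(\boldv-\boldv^k)\|_{L^M}<1$; the very definition of the Luxemburg norm then yields $\int_\Sigma M(x,2t(\boldv-\boldv^k))\dx\leq 1$. Together these give $\int_\Sigma M(x,t\boldv)\dx<\infty$, i.e.\ $t\boldv\in\mathcal{L}^M(\Sigma)$.

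For the reverse inclusion I take $\boldv\in L^M(\Sigma)$ satisfying $t\boldv\in\mathcal{L}^M(\Sigma)$ for every $t\geq 0$, and approximate $\boldv$ by the truncations $\boldv^k:=\boldv\chi_{\{|\boldv|\leq k\}}\in L^\infty(\Sigma;\eR^n)$. Since $\boldv-\boldv^k=\boldv\chi_{\{|\boldv|>k\}}$ and $M(x,\bzero)=0$, for every fixed $t>0$
\[
\int_\Sigma M(x,t(\boldv-\boldv^k)(x))\dx=\int_{\{|\boldv|>k\}}M(x,t\boldv(x))\dx.
\]
The hypothesis $t\boldv\in\mathcal{L}^M(\Sigma)$ makes the nonnegative integrand $M(\cdot,t\boldv)$ belong to $L^1(\Sigma)$, and the inclusion $L^M(\Sigma)\subset L^1(\Sigma;\eR^n)$ forces $\boldv$ to be finite a.e., whence $|\{|\boldv|>k\}|\to 0$. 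Absolute continuity of the Lebesgue integral drives the right-hand side to $0$ as $k\to\infty$. In particular it is eventually $\leq 1$, and by the definition of the Luxemburg norm this gives $\|\boldv-\boldv^k\|_{L^M}\leq 1/t$. As $t>0$ is arbitrary, $\boldv^k\to\boldv$ in $L^M(\Sigma)$, so $\boldv\in E^M(\Sigma)$.

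I do not anticipate any serious obstacle; the one delicate point is in the forward step, where the factor $2$ inserted in front of $t$ by the convexity splitting is essential: it permits me to isolate the $L^\infty$-portion, absorbed through \eqref{MLocBound}, from a remainder whose Luxemburg norm can be made strictly less than $1$ by taking $k$ large. The hypothesis \eqref{MLocBound} is used only here and is precisely what guarantees $L^\infty(\Sigma;\eR^n)\subset\mathcal{L}^M(\Sigma)$, so that $E^M(\Sigma)$ is meaningfully realised as the closure of bounded functions inside $L^M(\Sigma)$.
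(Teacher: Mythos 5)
Your proof is correct, and since the paper itself states this lemma only by citation to \cite{BGKSG17} without reproducing the argument, there is nothing in the text to diverge from: your two inclusions follow the standard route that the cited proof also takes. In particular, the convexity splitting with the factor $2t$ together with \eqref{MLocBound} (which is exactly what puts $L^\infty(\Sigma;\eR^n)$ inside $\mathcal{L}^M(\Sigma)$), and the truncation argument with absolute continuity of the integral giving $\|\boldv-\boldv^k\|_{L^M}\leq 1/t$ for every $t>0$, are both sound and complete.
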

We complete this section with the lemma that concerns the uniform boundedness of a composition of a mapping possessing the Orlicz growth and a sequence of bounded functions.
\begin{Lemma}\label{Lem:UBSeq}
	Let $d\geq 2$, $N\geq 1$ and for a Carath\'eodory mapping $\boldE:\eR^d\times\eR^{d\times N}\to\eR^{d\times N}$ let there exist an $\mathcal{N}$--function $M$ such that for a.a. $x\in\Rd$ and all $\boldxi\in\eR^{d\times N}$
	\begin{equation}\label{ECond}
		\boldE(x,\boldxi)\cdot\boldxi\geq c(M(x,\boldxi)+M^*(x,\boldxi))
	\end{equation}
	holds for some constant $c$. Let $\{\boldV^n\}_{n=1}^\infty$ be bounded in $L^\infty(\eR^d;\eR^{d\times N})$. Then $\{\boldE(\cdot,\boldV^n)\}_{n=1}^\infty$ is bounded in $L^\infty(\eR^d;\eR^{d\times N})$.
	Moreover, if $\Omega\subset\eR^d$ is bounded and measurable, $Y=(0,1)^d$, then
	\begin{enumerate}
		\item for any $\boldV\in L^\infty(\Omega\times Y;\eR^{d\times N})$ we have $\boldE(\cdot,\boldV)\in L^\infty(\Omega\times Y;\eR^{d\times N})$,
		\item for any $\boldV\in E^{M_y}(\Omega\times Y;\eR^{d\times N})$ we have $\boldE(\cdot,\boldV)\in E^{M_y^*}(\Omega\times Y;\eR^{d\times N})$ provided \ref{MTh} holds.
		\end{enumerate}
		\begin{proof}
			Assuming on the contrary that $\{\boldE(\cdot,\boldV^n)\}_{n=1}^\infty$ is unbounded, we have for arbitrary $K>0$ the existence of $n_K>0$ and $S_K\subset\eR^d$ with $|S_K|>0$ such that $|\boldE(\cdot,\boldV^{n_K})|>K$ on $S_K$. As $M$ is an $\mathcal{N}$--function, for a chosen $C>0$ there is $R>0$ such that $\frac{M(x,\boldxi)}{|\boldxi|}>C$ for any $|\boldxi|\geq R$. Thus for the choice $C=\sup_{n\in\eN}\|\boldV^n\|_{L^\infty(\eR^d)}$ we find $n_{R}$ and $S_{R}\subset\Omega\times Y$  with $|S_{R}|>0$ such that for $x\in S_{R}$ we obtain using~\eqref{ECond} that
\begin{equation*}
	C< \frac{M^{*}(x,\boldE(x,\boldV^{n_{R}}))}{|\boldE(x,\boldV^{n_{R}})|}\leq |\boldV^{n_{R}}|\leq C,
\end{equation*}
which contradicts the unboundedness of $\{\boldE(\cdot,\boldV^n)\}_{n=1}^\infty$.

For the proof of the second part of the lemma we refer to \cite[Lemma 3.4.]{BGKSG17}
	\end{proof}
\end{Lemma}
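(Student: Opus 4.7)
\medskip

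\noindent\textbf{Proof proposal for Lemma~\ref{Lem:UBSeq}.}

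The plan is to extract from the coercivity condition \eqref{ECond} a pointwise upper bound on $|\boldE(x,\boldxi)|$ in terms of $|\boldxi|$, exploiting the superlinear growth at infinity of the $\mathcal{N}$--function $M^*(x,\cdot)$. From \eqref{ECond} and the Cauchy--Schwarz inequality I get, for a.a.\ $x\in\eR^d$ and all $\boldxi\in\eR^{d\times N}$,
\begin{equation*}
c\,M^*(x,\boldE(x,\boldxi))\le \boldE(x,\boldxi)\cdot\boldxi\le |\boldE(x,\boldxi)|\,|\boldxi|,
\end{equation*}
which after dividing by $|\boldE(x,\boldxi)|$ yields
\begin{equation*}
\frac{M^*(x,\boldE(x,\boldxi))}{|\boldE(x,\boldxi)|}\le \frac{|\boldxi|}{c}.
\end{equation*}

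For the first assertion I argue by contradiction along the lines of the statement itself. Put $C:=\sup_{n}\|\boldV^n\|_{L^\infty(\eR^d)}$ and suppose, for contradiction, that $\{\boldE(\cdot,\boldV^n)\}$ is unbounded in $L^\infty(\eR^d;\eR^{d\times N})$. Then for every $K>0$ I can find $n_K$ and a set $S_K\subset\eR^d$ of positive measure on which $|\boldE(\cdot,\boldV^{n_K})|>K$. Substituting $\boldxi=\boldV^{n_K}(x)$ into the displayed estimate gives $M^*(x,\boldE(x,\boldV^{n_K}(x)))/|\boldE(x,\boldV^{n_K}(x))|\le C/c$ on $S_K$. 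Sending $K\to\infty$ and using the superlinear growth $M^*(x,\boldeta)/|\boldeta|\to\infty$ as $|\boldeta|\to\infty$ (a defining property of the $\mathcal{N}$--function $M^*$) contradicts this uniform bound, which proves boundedness of $\{\boldE(\cdot,\boldV^n)\}$ in $L^\infty(\eR^d;\eR^{d\times N})$. Item 1 of the ``moreover'' part is the exact same argument carried out on the product domain $\Omega\times Y$ with $\boldV\in L^\infty(\Omega\times Y;\eR^{d\times N})$ playing the role of $\boldV^n$.

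For item 2, the strategy is different: given $\boldV\in E^{M_y}(\Omega\times Y;\eR^{d\times N})$, I would approximate $\boldV$ by the truncations $\boldV^k:=\boldV\chi_{\{|\boldV|\le k\}}\in L^\infty$, use item 1 to conclude $\boldE(\cdot,\boldV^k)\in L^\infty\subset E^{M_y^*}$, and show that $\boldE(\cdot,\boldV^k)\to\boldE(\cdot,\boldV)$ in the modular topology of $L^{M_y^*}$. The hypothesis \ref{MTh} provides the sandwich $m_2^*\le M^*(y,\cdot)\le m_1^*$ (on appropriate arguments), which gives $y$--independent upper $\mathcal{N}$--functions and lets one check uniform integrability of $M^*(y,t\boldE(y,\boldV^k))$ for every $t>0$ via the coercivity estimate $M^*(y,\boldE(y,\boldV^k))\le \tfrac{1}{c}|\boldE(y,\boldV^k)|\,|\boldV^k|$ and the integrability of $m_1^*(t\boldE(\cdot,\boldV^k))$. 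Combined with pointwise convergence of $\boldE(\cdot,\boldV^k)$ (from the Carath\'eodory property) and Vitali's theorem, this yields the desired modular convergence and hence membership in $E^{M_y^*}$; alternatively one may invoke Lemma~\ref{Lem:EMChar} directly since \ref{MTh} implies the integrability condition \eqref{MLocBound} for $M^*$.

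The main obstacle I foresee is the implicit uniformity in $x$ needed to pass from the pointwise superlinearity of $M^*(x,\cdot)$ to a genuine $L^\infty$ bound on $\boldE(\cdot,\boldV^n)$. The contradiction argument above is the cleanest way to handle it because the unboundedness hypothesis supplies an explicit set $S_K$ on which a uniform lower bound on $|\boldE|$ is available, so that the $\mathcal{N}$--function structure is applied on that set rather than uniformly in $x$. For item 2, the technical difficulty is purely modular: one cannot use dominated convergence in $L^{M_y^*}$ directly (the space is generally non-separable and non-reflexive), so uniform integrability via Lemma~\ref{Lem:UnifIntegr} is essential, and the sandwich provided by \ref{MTh} is what makes it available.
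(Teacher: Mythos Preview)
Your argument for the first assertion is essentially identical to the paper's: both extract from \eqref{ECond} the pointwise inequality $M^*(x,\boldE(x,\boldxi))/|\boldE(x,\boldxi)|\le |\boldxi|/c$ and reach a contradiction via the superlinear growth of $M^*$, with your version handling the constant $c$ slightly more carefully. For item~2 the paper simply defers to \cite[Lemma~3.4]{BGKSG17}, while you supply an outline (truncation plus modular convergence, or an appeal to Lemma~\ref{Lem:EMChar}); this is the standard route and your sketch is adequate.
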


\section{Auxiliary tools} \label{Ape2}
\begin{Lemma}\label{Lem:Duality}
	Let $X$ be a Banach space, $V$ be a subspace of $X$, $g$ be a closed, convex functional on $X$ that is continuous at some $x\in V$.
	Then
	\begin{equation}\label{IdDual}
     \inf_{x\in V} \{g(x)-\langle \eta,x\rangle\}+\inf_{\xi\in V^\bot} g^*(\eta+\xi)=0
	\end{equation}
	for all  $\eta\in X^*$.
	\begin{proof}
	One deduces by definition of a convex conjugate that	
	\begin{equation}\label{GConj}
\forall\xi\in X^*: (g-\eta)^*(\xi)=\sup_{x\in X}\{\langle \eta+\xi,x\rangle-g(x)\}=g^*(\eta+\xi).
	\end{equation}
	According to \cite[Theorem 14.2]{ZKO94}
	\begin{equation*}
		\inf_{x\in V} A(x)+\inf_{x^*\in V^\bot}A^*(x^*)=0
	\end{equation*}
	for a closed, convex functional $A$ that is continuous at some $x\in V$. We set $A(x):=(g-\eta)(x)$ and the expression for $A^*$ determined by~\eqref{GConj} in the latter equality to conclude~\eqref{IdDual}.
	\end{proof}
\end{Lemma}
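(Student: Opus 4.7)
The plan is to recognise the identity as a special case of the Fenchel--Rockafellar sum-rule for convex conjugates. I would first introduce the indicator functional $\delta_V$ of $V$ (equal to $0$ on $V$ and $+\infty$ off $V$) and rewrite the left-hand side as
\begin{equation*}
\inf_{x\in V}\{g(x)-\langle\eta,x\rangle\}=-\sup_{x\in X}\{\langle\eta,x\rangle-(g+\delta_V)(x)\}=-(g+\delta_V)^*(\eta).
\end{equation*}
Since the convex conjugate of $\delta_V$ is precisely $\delta_{V^\bot}$, and $V^\bot$ is a linear subspace (so $\xi\in V^\bot$ iff $-\xi\in V^\bot$), the right-hand infimum is the infimal convolution of $g^*$ with $\delta_V^*$:
\begin{equation*}
\inf_{\xi\in V^\bot}g^*(\eta+\xi)=\inf_{\eta_1+\eta_2=\eta}\bigl(g^*(\eta_1)+\delta_{V^\bot}(\eta_2)\bigr).
\end{equation*}
The claim is therefore equivalent to the Moreau--Rockafellar sum-rule $(g+\delta_V)^*(\eta)=(g^*\,\square\,\delta_V^*)(\eta)$ evaluated at the given $\eta\in X^*$.

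I would then verify that the qualification condition for this sum-rule is satisfied. Both $g$ and $\delta_V$ are closed convex functionals on $X$, and by hypothesis $g$ is continuous at some point of $\mathrm{dom}(\delta_V)=V$; this is the classical Slater-type condition that rules out a duality gap and guarantees that the infimum in the infimal convolution is actually attained. Invoking \cite[Theorem~14.2]{ZKO94} (or any equivalent formulation of Fenchel--Rockafellar duality) then yields the identity immediately.

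The only real obstacle is the no-gap issue, which is dispatched by the continuity hypothesis and is therefore essentially free. If one preferred a self-contained derivation rather than citing an abstract result, the sum-rule can be obtained by applying the geometric Hahn--Banach theorem to separate the epigraph of $(g-\eta)|_V$ from the open halfspace lying strictly below $\inf_{x\in V}(g-\eta)(x)$; the continuity of $g$ at a point of $V$ is exactly what ensures that the separating linear functional can be taken in $X^*$, and its restriction to $V$ being zero places it in $V^\bot$, where it realises the infimum on the right-hand side of \eqref{IdDual}.
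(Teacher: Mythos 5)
Your proposal is correct and amounts to essentially the same argument as the paper: both reduce the identity to a standard Fenchel-type duality theorem under the qualification that $g$ is continuous at a point of $V$, the paper by substituting $A:=g-\eta$ directly into \cite[Theorem 14.2]{ZKO94} after computing $(g-\eta)^*(\xi)=g^*(\eta+\xi)$, and you by the equivalent repackaging via the indicator $\delta_V$, its conjugate $\delta_{V^\bot}$, and the Moreau--Rockafellar sum rule $(g+\delta_V)^*=g^*\,\square\,\delta_{V^\bot}$ (indeed you even cite the same theorem as one admissible formulation). The extra remarks on attainment of the infimum and the Hahn--Banach separation sketch are sound but not needed for the stated identity.
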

Finally, we also recall the weak$^*$ lower semicontinuity property of convex functionals. Since in our case,  the ${\mathcal N}$--function $M$ may not  satisfy the $\Delta_2$--condition in general, the spaces  do not have to be  reflexive. However, due to Lemma~\ref{Thm:OrlSpProp}, we see that any $L^M$ always has a separable predual space and consequently any bounded sequence possesses a weakly$^*$ convergent subsequence. This motivates us to introduce the last convergence theorem, that can be obtained by standard  weak lower semicontinuity properties of convex functionals, see e.g. \cite[Theorem 4.5]{G03}, namely:
\begin{Lemma}\label{Lem:SemCon}
	Let $\Omega\subset\Rd$ be open, $Y=(0,1)^d$, $n\in\eN$ and $\Phi:Y\times \eR^n \rightarrow \eR$ satisfy:
\begin{enumerate}[label=(\alph*)]
	\item $\Phi$ is Carath\'eodory,
	\item $\Phi(y,\cdot)$ is convex for almost all $y\in Y$,
	\item $\Phi\geq 0$.
\end{enumerate}
	Then we have the following semicontinuity property: $\boldv^k\WCon \boldv$ in $L^1(\Omega\times Y;\eR^n)$ as $k\rightarrow\infty$ implies
	\begin{equation*}
		\liminf_{k\rightarrow\infty}\int_{\Omega}\int_Y \Phi(y,\boldv^k(x,y))\dy\dx\geq\int_\Omega\int_Y \Phi(y,\boldv(x,y))\dy\dx.
		\end{equation*}
\end{Lemma}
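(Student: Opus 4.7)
The plan is to prove the claim by reducing weak lower semicontinuity of the integral functional $F(\boldv) := \int_\Omega \int_Y \Phi(y, \boldv(x,y)) \dy \dx$ to its \emph{strong} lower semicontinuity on $L^1(\Omega \times Y; \eR^n)$, and then deriving the latter from Fatou's lemma. First I would observe that because $\Phi$ is Carath\'eodory (measurable in $y$, continuous in the second argument), the composition $(x,y) \mapsto \Phi(y, \boldv(x,y))$ is jointly measurable for any measurable $\boldv$, so that $F$ is well defined as a $[0,\infty]$--valued functional on $L^1(\Omega \times Y; \eR^n)$. The assumed convexity of $\Phi(y, \cdot)$ for a.a. $y$ together with linearity of integration then imply that $F$ itself is convex.

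For strong lower semicontinuity, assume $\boldv^k \to \boldv$ strongly in $L^1(\Omega \times Y; \eR^n)$. Given any subsequence, I would extract a further subsequence (not relabelled) along which $\boldv^k \to \boldv$ pointwise a.e. in $\Omega \times Y$. Continuity of $\Phi(y, \cdot)$ gives $\Phi(y, \boldv^k(x,y)) \to \Phi(y, \boldv(x,y))$ a.e., and since $\Phi \geq 0$, Fatou's lemma yields $\liminf_k F(\boldv^k) \geq F(\boldv)$ along this sub-subsequence. The standard subsequence principle then upgrades this to the same inequality for the original sequence.

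To pass from strong to weak lower semicontinuity, I would invoke Mazur's lemma: if $\boldv^k \WCon \boldv$ in $L^1(\Omega \times Y; \eR^n)$, there exist finite convex combinations $\tilde\boldv^k = \sum_{j \geq k} \lambda^k_j \boldv^j$ with $\lambda^k_j \geq 0$ and $\sum_j \lambda^k_j = 1$ such that $\tilde\boldv^k \to \boldv$ strongly in $L^1$. Convexity of $F$ together with the preceding strong lower semicontinuity then yields
\begin{equation*}
F(\boldv) \leq \liminf_{k \to \infty} F(\tilde\boldv^k) \leq \liminf_{k \to \infty} \sum_{j \geq k} \lambda^k_j F(\boldv^j) \leq \liminf_{k \to \infty} \sup_{j \geq k} F(\boldv^j) = \liminf_{k \to \infty} F(\boldv^k),
\end{equation*}
which is the desired conclusion.

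The only real subtlety is the joint measurability of $\Phi(y, \boldv(x,y))$ from the Carath\'eodory hypothesis (a standard fact on normal convex integrands), and I do not anticipate it being an obstacle. The nonnegativity $\Phi \geq 0$ is precisely what makes Fatou's lemma applicable without constructing an integrable minorant, which is why no growth condition on $\Phi$ is needed in the hypothesis.
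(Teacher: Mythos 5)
Your overall strategy -- Fatou's lemma for strong $L^1$ lower semicontinuity, convexity of the integral functional $F$, and Mazur's lemma to upgrade strong to weak lower semicontinuity -- is the classical route, and it is a legitimate self-contained alternative to what the paper does: the paper offers no proof of this lemma, but simply invokes the standard semicontinuity theorem for convex integrands, citing \cite[Theorem 4.5]{G03}. Your measurability remark (Carath\'eodory integrand composed with a measurable function, with $\tilde\Phi((x,y),\boldxi):=\Phi(y,\boldxi)$ viewed as Carath\'eodory on $(\Omega\times Y)\times\eR^n$) and the Fatou/subsequence argument for strong lower semicontinuity are fine, and nonnegativity indeed makes everything work in $[0,\infty]$ without any growth assumption.

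There is, however, a genuine error in the final chain of inequalities: $\liminf_{k\to\infty}\sup_{j\geq k}F(\boldv^j)$ is \emph{not} equal to $\liminf_{k\to\infty}F(\boldv^k)$; since $k\mapsto\sup_{j\geq k}F(\boldv^j)$ is nonincreasing, this quantity equals $\limsup_{k\to\infty}F(\boldv^k)$. As written, your argument only yields $F(\boldv)\leq\limsup_{k\to\infty}F(\boldv^k)$, which is strictly weaker than the assertion of the lemma. The standard repair is to fix the quantitative target before applying Mazur: set $\ell:=\liminf_{k\to\infty}F(\boldv^k)$ and choose a subsequence $\{\boldv^{k_j}\}$ with $F(\boldv^{k_j})\to\ell$; this subsequence still converges weakly to $\boldv$ in $L^1(\Omega\times Y;\eR^n)$, so Mazur's lemma applies to it, and the same computation now gives $F(\boldv)\leq\liminf_{m\to\infty}\sup_{j\geq m}F(\boldv^{k_j})=\limsup_{j\to\infty}F(\boldv^{k_j})=\ell$, which is exactly the claimed inequality. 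With this one-line modification your proof is complete and correct.
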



\section{Existence of solutions to elliptic problems}\label{Ape3}
To the best of authors' knowledge only the result from \cite{GMW12, GSZG2017} concern  the existence of weak solutions of elliptic problems in which the growth condition is given by an anisotropic inhomogeneous ${\mathcal N}-$function. For the sake of completeness, we show here that for elliptic systems it is possible to obtain existence of a weak solution provided one of the conditions \ref{MDeltaTwo}-\ref{MStDeltaTwo} holds.

Before we prove existence results corresponding to the conditions \ref{MDeltaTwo} and \ref{MStDeltaTwo}, we state a variant of the Brouwer fixed point theorem.
\begin{Lemma}\cite[p.493]{E98}\label{Lem:FixP}
	Let $\bolds:\eR^m\rightarrow\eR^m$ be a continuous mapping and
	\begin{equation}\label{NonNegCond}
		\bolds(x)\cdot x\geq0 \text{ if } |x|=r
	\end{equation}
	for some $r>0$. Then there is a point $x$ with $|x|\leq r$ such that $\bolds(x)=0$.
\end{Lemma}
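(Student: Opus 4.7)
The plan is to argue by contradiction, reducing to the classical Brouwer fixed-point theorem, which I take as the (nontrivial) black box. Suppose, for the sake of contradiction, that $\bolds(x) \neq 0$ for every $x \in \overline{B_r}$, where $B_r := \{x \in \eR^m : |x| < r\}$. Since $\bolds$ is continuous on the compact set $\overline{B_r}$ and never vanishes there, $|\bolds(\cdot)|$ is continuous and bounded away from zero on $\overline{B_r}$.

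Using this, I would define an auxiliary map $T : \overline{B_r} \to \overline{B_r}$ by
\begin{equation*}
T(x) := -r \frac{\bolds(x)}{|\bolds(x)|}.
\end{equation*}
By the remarks above, $T$ is well-defined and continuous, and its image lies on the sphere $\{|x| = r\} \subset \overline{B_r}$, so $T$ is indeed a continuous self-map of the compact, convex set $\overline{B_r}$. Brouwer's fixed-point theorem then yields some $x_0 \in \overline{B_r}$ with $T(x_0) = x_0$.

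Since $T$ maps into the sphere, automatically $|x_0| = r$. Plugging the fixed-point identity into the inner product with $\bolds(x_0)$,
\begin{equation*}
x_0 \cdot \bolds(x_0) = -\frac{r}{|\bolds(x_0)|} \, \bolds(x_0) \cdot \bolds(x_0) = -r\, |\bolds(x_0)| < 0,
\end{equation*}
which directly contradicts the hypothesis \eqref{NonNegCond} at the boundary point $x_0$. Hence the initial assumption fails and there must exist $x \in \overline{B_r}$ with $\bolds(x) = 0$.

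The only real obstacle is the appeal to Brouwer's theorem itself; once that is in hand, the construction of $T$ is the standard ``radial projection'' trick and the remaining verification is a one-line computation. A variant I would keep in reserve, in case one wishes to avoid Brouwer, is to consider, for each $\delta > 0$, the gradient flow $\dot{x}(t) = -\bolds(x(t))$ and observe that the condition $\bolds(x)\cdot x \geq 0$ on the sphere makes $\overline{B_r}$ forward-invariant; an $\omega$-limit / topological degree argument then produces the zero. This alternative is heavier, however, so the Brouwer-based proof above is the natural route.
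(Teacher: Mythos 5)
Your proof is correct and is essentially the argument the paper implicitly relies on: the paper gives no proof of its own but cites Evans (p.~493), where exactly this construction appears — assume $\bolds$ has no zero in $\overline{B_r}$, apply Brouwer's fixed-point theorem to the radial map $x\mapsto -r\,\bolds(x)/|\bolds(x)|$, and derive $\bolds(x_0)\cdot x_0=-r|\bolds(x_0)|<0$ at a boundary fixed point, contradicting \eqref{NonNegCond}. Nothing further is needed.
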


\begin{proof}[Proof of Lemma~\ref{Lem:ExUnMStDTwo}]
		Let us note that as $M^*$ satisfies $\Delta_2-$condition we have $L^{M^*}(\Omega)=E^{M^*}(\Omega)$ by Lemma~\ref{Thm:OrlSpProp}. We proceed by the Galerkin method. First, we observe that $W^1_0E^M(\Omega;\eR^{N})$ is separable as it is a closed subspace of the separable space $E^M(\Omega;\eR^{N})^{d\times N}$. Thus there exists $\{\boldw^i\}^\infty_{i=1}$ a linearly independent subset of $W^1_0E^M(\Omega;\eR^{N})$, such that $\overline{\bigcup_{k=1}^\infty Lin\{\boldw^i\}_{i=1}^k}^{\|\cdot\|_{W^{1,M}}}=W^1_0E^M(\Omega;\eR^{N})$. We construct Galerkin approximations to \eqref{EllPrWeakFormMStDTw}. Let us define $\boldu^k=\sum_{i=1}^k\alpha^k_i\boldw^i$ for $k\in\eN$, where $\alpha_i^k\in\eR$ are determined by
		\begin{equation}\label{GalAppMStDTwo}
			\int_\Omega \boldA\left(x,\nabla \boldu^k\right)\cdot\nabla \boldw^i\dx=\int_\Omega \boldF\cdot\nabla \boldw^i\dx
		\end{equation}
		for all $i=1,\ldots,k$.\\
	\textbf{Step 1}:
		We show the existence of $(\alpha_1^k,\ldots,\alpha_k^k)\in\eR^k$ satisfying \eqref{GalAppMStDTwo} with the help of Lemma \ref{Lem:FixP}. We define a mapping $\bolds:\eR^k\rightarrow \eR^k$ as
	\begin{equation*}
	s_j(\balpha)=\int_\Omega \boldA\left(x,\sum_{i=1}^k\alpha_i \nabla \boldw^i\right)\cdot\nabla \boldw^j-\boldF\cdot\nabla \boldw^j\dx.
	\end{equation*}
	Let us denote $\boldW(\balpha):=\sum_{i=1}^k\alpha_i \nabla \boldw^i$. First, we show that $\bolds$ is continuous. Let $\balpha^n\rightarrow \balpha$ in $\eR^k$. Then
	\begin{equation*}
		|s_j(\balpha^n)-s_j(\balpha)|=\left|\int_\Omega \left(\boldA\left(x,\boldW(\balpha^n)\right)-\boldA\left(x,\boldW(\balpha)\right)\right)\cdot\nabla \boldw^j\dx\right|
	\end{equation*}
	for all $j=1,\ldots,k$.	We denote
	\begin{equation*}
		h^n_j:=\left(\boldA\left(x,\boldW(\balpha^n)\right)-\boldA\left(x,\boldW(\balpha)\right)\right)\cdot\nabla \boldw^j.
	\end{equation*}
	Obviously, we have for almost all $x\in\Omega$ that $h^n_j\rightarrow 0$ as $n\rightarrow\infty$.
 From \ref{ATh} and the Young inequality it follows that
	\begin{equation*}
	\begin{split}
		c&\int_\Omega M\left(x,\boldW(\balpha^n)\right)+M^*\left(x,\boldA\left(x,\boldW(\balpha^n)\right)\right)\dx\leq\int_\Omega \boldA\left(x,\boldW(\balpha^n)\right)\cdot \boldW(\balpha^n)\dx\\&\leq \int_\Omega M\left(x,\frac{2}{c}\boldW(\balpha^n)\right)+\frac{c}{2}M^*\left(x,\boldA\left(x,\boldW(\balpha^n)\right)\right)\dx.
		\end{split}
	\end{equation*}
	Hence we obtain by convexity of $M$ with respect to the second variable
	\begin{equation*}
		\begin{split}
		\frac{c}{2}&\int_\Omega M^*\left(x,\boldA\left(x,\boldW(\balpha^n)\right)\right)\dx\leq \int_\Omega M\left(x,\frac{2}{c}\boldW(\balpha^n)\right)\dx\\&\leq \sum_{i=1}^k\frac{\alpha_i^{n}}{|\balpha^{n}|}\int_\Omega M\left(x,\frac{2}{c}|\balpha^{n}|\nabla \boldw^i\right)\dx\leq k\max_{i=1,\ldots k} \int_\Omega M\left(x,\frac{2}{c}|\balpha^{n}|\nabla \boldw^i\right)\dx,
			\end{split}
		\end{equation*}
		which is finite as $\{\balpha^{n}\}_{n=1}^\infty$ is bounded. From the latter estimate one deduces the uniform integrability of $\boldA\left(\cdot, \boldW(\balpha^n)\right)$. As $\boldA(\cdot,\boldW(\balpha))\in L^1(\Omega;\eR^{d\times N})$ and $\nabla \boldw^j\in L^\infty\left(\Omega;\eR^{d\times N}\right)$, we have that $h^n_j$ is uniformly integrable. Hence we conclude the continuity of $\bolds$ since by the Vitali theorem it follows that
		\begin{equation*}
			|\bolds(\balpha^n)-\bolds(\balpha)|\leq k\max_{j=1,\ldots,k}\int_\Omega |h^n_j|\dx\rightarrow 0\text{ as }n\rightarrow \infty.
		\end{equation*}
	Now, we show that $\bolds$ satisfies \eqref{NonNegCond}.  Employing \ref{ATh}, the Young inequality and \eqref{SSF} we deduce
		\begin{equation}\label{FixPoinPrep}
			\begin{split}
			\bolds(\balpha)\cdot\balpha&=\int_\Omega \boldA\left(x,\boldW(\balpha)\right)\cdot \boldW(\balpha)-\boldF\cdot \boldW(\balpha)\dx\geq \frac{c}{2}\int_\Omega M\left(x,\boldW(\balpha)\right)\dx-M^*\left(x,\frac{2}{c}\boldF\right)\dx\\
			&\geq c\|\boldW(\balpha)\|_{L^M(\Omega)}-1-\int_\Omega M^*\left(x,\frac{2}{c}\boldF\right)\dx.
			\end{split}
		\end{equation}
		Let us show that
		\begin{equation}\label{FinAppUnboun}
			\left\|\boldW(\balpha)\right\|_{L^M(\Omega)}\rightarrow\infty\text{ as }|\balpha|\rightarrow\infty.
		\end{equation}
		We observe that $\balpha\mapsto\|\boldW(\balpha)\|_{L^M(\Omega)}$ is a continuous function, in particular it is continuous on the unit sphere $S_1$ in $\eR^k$, which is compact. Thus the minimum of $\|\boldW(\balpha)\|_{L^{M^*}(\Omega)}$ on $S_1$  is attained at some $\bbeta\in S_1$. We show that $\|\boldW(\bbeta)\|_{L^{M^*}(\Omega)}>0$. Assume contrary that $\|\boldW(\bbeta)\|_{L^M(\Omega)}=0$. Then we have $\left\|\sum_{i=1}^k\beta_i\nabla \boldw^i\right\|_{L^1(\Omega)}=0$ and by the Poincar\'e inequality $\|\sum_{i=1}^k\beta_i \boldw^i\|_{L^1(\Omega)}=0$. Hence we obtain $\sum_{i=1}^k\beta_i \boldw^i=0$ a.e. in $\Omega$, which implies $\beta_i=0$ for each $i=1,\ldots,k$ since $\{\boldw^i\}_{i=1}^k$ are linearly independent, which is a contradiction. Thus $\|\boldW(\bbeta)\|_{L^{M^*}(\Omega)}>0$, we have $\left\|\boldW(\balpha)\right\|_{L^M(\Omega)}=|\balpha|\left\|\boldW\left(\frac{\balpha}{|\balpha|}\right)\right\|_{L^M(\Omega)}\geq |\balpha|\|\boldW(\bbeta)\|_{L^{M^*}(\Omega)}
 $ and \eqref{FinAppUnboun} follows. For $R$ large enough we obtain that $\bolds(\balpha)\cdot\balpha\geq 0$ for $|\alpha|=R$ from~\eqref{FixPoinPrep}. Consequently, by Lemma \ref{Lem:FixP} there is $\balpha\in\eR^k$ satisfying \eqref{GalAppMStDTwo}.\\
	\textbf{Step 2}: We show uniform estimates of $\{\uk\}_{k=1}^\infty$ and $\{\boldA\left(x,\nabla u^k\right)\}_{k=1}^\infty$. Multiplying \eqref{GalAppMStDTwo} by $\alpha_i^k$ and summing over $i=1,\ldots,k$ yields
			\begin{equation}\label{MStDTwoTestAp}
			\int_\Omega \boldA\left(x,\nabla \uk\right)\cdot\nabla \uk\dx=\int_\Omega \boldF\cdot\nabla \uk\dx.
		\end{equation}	
		Hence we obtain using \ref{ATh} and the Young inequality
		\begin{equation*}
			\frac{c}{2}\int_\Omega M\left(x,\nabla \uk\right)\dx+c\int_\Omega M^*\left(x,\boldA\left(x,\nabla \uk\right)\right)\dx\leq \int_\Omega M^*\left(x,\frac{2}{c}\boldF\right)\dx.
		\end{equation*}
		Since the right hand side of the latter inequality is finite as $\boldF\in L^\infty(\Omega;\eR^{d\times N})$, we infer the existence of $\boldu\in W^1_0L^M(\Omega;\eR^{N})$ and $\bar\boldA\in E^{M^*}(\Omega;\eR^{d\times N})$ such that
		\begin{equation}\label{MStDTwAppConv}
			\begin{alignedat}{2}
				\nabla \uk&\WSCon\nabla \boldu&&\text{ in }L^M(\Omega;\eR^{d\times N}),\\
				\boldA(\cdot,\nabla \uk)&\WSCon \bar\boldA &&\text{ in }E^{M^*}(\Omega;\eR^{d\times N})
			\end{alignedat}
		\end{equation}
		as $k\rightarrow\infty$. Notice that \eqref{MStDTwAppConv} follows from the fact that $M^*$ satisfies $\Delta_2-$condition.\\
		\textbf{Step 3}: We identify the limit function $ \bar\boldA$. Employing the convergence \eqref{MStDTwAppConv}$_2$ in \eqref{GalAppMStDTwo} we have
		\begin{equation}\label{MK13}
			\int_\Omega \bar\boldA\cdot\nabla \boldw^i\dx=\int_\Omega \boldF\cdot\nabla \boldw^i\dx
		\end{equation}
		for each $i=1,\ldots,k$. Multiplying by $\alpha_i^k$ and summing over $i=1,\ldots,k$ we get
		\begin{equation*}
			\int_\Omega \bar\boldA\cdot\nabla \uk\dx=\int_\Omega \boldF\cdot\nabla \uk\dx.
		\end{equation*}
		Since $\bar\boldA\in E^{M^*}(\Omega;\eR^{d\times N})$, we obtain using the convergence \eqref{MStDTwAppConv}$_1$ 	
		\begin{equation}\label{MStDTwoIdentForLim}
			\int_\Omega \bar\boldA\cdot\nabla \boldu\dx=\int_\Omega \boldF\cdot\nabla \boldu\dx.
		\end{equation}
		Moreover, the application of \eqref{MStDTwAppConv}$_1$ in \eqref{MStDTwoTestAp} yields
		\begin{equation}\label{MStDTwoLimIdent}
			\lim_{k\rightarrow\infty}\int_\Omega \boldA\left(x,\nabla \uk\right)\cdot\nabla \uk\dx=\int_\Omega \boldF\cdot\nabla \boldu\dx.
		\end{equation}
		Let us choose an arbitrary $\boldW\in L^\infty(\Omega;\eR^{d\times N})$. The monotonicity of $\boldA$ combined with \eqref{MStDTwoTestAp} yields
		\begin{equation*}
			0\leq \int_\Omega \left(\boldA\left(x,\nabla \uk\right)-\boldA(x,\boldW)\right)\cdot(\nabla \uk-\boldW)\dx= \int_\Omega \boldF\cdot\nabla \boldu^k -\boldA(x,\nabla\boldu^k)\cdot\boldW-\boldA(x,\boldW)\cdot(\nabla \uk-\boldW)\dx.
		\end{equation*}
	We employ \eqref{MStDTwAppConv} to perform the limit passage $k\to\infty$ in the latter inequality and use \eqref{MStDTwoIdentForLim} to obtain
	\begin{equation}\label{SkoroMintyNer}
			0\leq \int_\Omega (\bar\boldA -\boldA(x,\boldW))\cdot(\nabla \boldu-\boldW)\dx.
		\end{equation}
		Then we denote for a positive $l$
		\begin{equation*}
			\Omega_l=\{x\in\Omega:|\nabla \boldu|\leq l\}
		\end{equation*}
		and $\chi_l$ be the characteristic function of $\Omega_l$. We choose arbitrary $0<l<m$, $h>0$ and $\boldZ\in L^\infty(\Omega;\eR^{d\times N})$ and set $\boldW=\nabla\boldu\chi_m+h\boldZ\chi_l$ in \eqref{SkoroMintyNer} to obtain
		\begin{equation}\label{SkoroSkoroMintyNer}
			0\leq \int_{\Omega\setminus\Omega_m}\bar\boldA\cdot\nabla\boldu\dx-h\int_{\Omega_l}(\bar\boldA -\boldA(x,\nabla\boldu+h\boldZ))\cdot\boldZ\dx.
		\end{equation}
		Then as $|\Omega\setminus\Omega_m|\to 0$ and $\bar\boldA\cdot\nabla\boldu\chi_{\Omega\setminus\Omega_m}\to 0$ as $m\to\infty$ we infer performing the limit passage $m\to\infty$ in \eqref{SkoroSkoroMintyNer} by the Lebesgue dominated convergence theorem
	\begin{equation}\label{SkoroSkoroSkoroMintyNer}
			0\leq -h\int_{\Omega_l}(\bar\boldA -\boldA(x,\nabla\boldu+h\boldZ))\cdot\boldZ\dx.
		\end{equation}	
		Next, by \eqref{IntMSphere2} we get
		\begin{equation*}
	\int_{\Omega_l} M^*(x,\boldA(\cdot,\nabla\boldu+h\boldZ))\dx\leq \int_\Omega\sup_{|\boldxi|=\|\boldA(\cdot, \nabla\boldu+h\boldZ)\|_{L^\infty(\Omega_l)}}M^*(x,\boldxi)\dx<\infty
\end{equation*}
uniformly in $h\in(0,1)$ as $\sup_{h\in(0,1)}\|\boldA(\cdot, \nabla\boldu+h\boldZ)\|_{L^\infty(\Omega_l)}<\infty$, which follows from Lemma~\ref{Lem:UBSeq}. Hence $\{\boldA(\cdot,\nabla\boldu+h\boldZ)\}_{h\in(0,1)}$ is uniformly integrable. Furthermore $\boldA(\cdot,\nabla\boldu+h\boldZ)\to \boldA(\cdot,\nabla\boldu)$ a.e. in $\Omega$ as $h\to 0$. Thus by the Vitali convergence theorem we get performing the limit passage $h\to 0$ in \eqref{SkoroSkoroSkoroMintyNer} divided by $-h$
	\begin{equation}\label{MintyNer}
			0\geq\int_{\Omega_l}(\bar\boldA -\boldA(x,\nabla\boldu))\cdot\boldZ\dx
		\end{equation}	
		for any $\boldZ\in L^\infty(\Omega;\eR^{d\times N})$. Setting
		\begin{equation*}
			\boldZ=\frac{\bar\boldA -\boldA(x,\nabla\boldu)}{1+|\bar\boldA -\boldA(x,\nabla\boldu)|}
		\end{equation*}
		in \eqref{MintyNer} we deduce $\bar\boldA(x)=\boldA(x,\nabla\boldu(x))$ for a.a. $x\in\Omega_l$. As $|\Omega\setminus\Omega_l|\to 0$ as $l\to\infty$ we infer $\bar\boldA(x)=\boldA(x,\nabla\boldu(x))$ for a.a. $x\in\Omega$.\\
		\textbf{Step 4}: We show the uniqueness of a weak solution. Supposing that $\boldu_1, \boldu_2$ are weak solutions satisfying \eqref{EllPrWeakFormMStDTw}, we subtract the weak formulations corresponding to $\boldu_1$ and $\boldu_2$ to obtain
		\begin{equation}\label{MStDTwWFDiff}
			\int_{\Omega}\left(\boldA(x,\boldu_1)-\boldA(x,\boldu_2)\right)\cdot\nabla\bphi\dx=0\ \forall \bphi\in W^{1}_0L^M(\Omega;\eR^{N}).
		\end{equation}
		As we have $\boldu_1-\boldu_2\in W^1_0 L^M(\Omega;\eR^N)$, wet set $\bphi:=\boldu_1-\boldu_2$ in \eqref{MStDTwWFDiff} to get
		\begin{equation*}
			\int_{\Omega}\left(\boldA(x,\boldu_1)-\boldA(x,\boldu_2)\right)\cdot(\nabla \boldu_1-\nabla \boldu_2)\dx=0.
		\end{equation*}
		Then \ref{AF} implies $\nabla (\boldu_1-\boldu_2)=0$ a.e. in $\Omega$. Regarding the zero trace of $\boldu_1-\boldu_2$ on $\partial\Omega$ we conclude $\boldu_1=\boldu_2$ a.e. in $\Omega$.
	\end{proof}
\end{appendix}


\def\cprime{$'$}
\providecommand{\bysame}{\leavevmode\hbox to3em{\hrulefill}\thinspace}
\providecommand{\MR}{\relax\ifhmode\unskip\space\fi MR }
\providecommand{\MRhref}[2]{%
  \href{http://www.ams.org/mathscinet-getitem?mr=#1}{#2}
}
\providecommand{\href}[2]{#2}


\end{document}